\documentclass[12pt,a4paper]{amsart}
\usepackage{amsmath, amssymb, amsthm, yhmath}
\input cyracc.def

\usepackage{graphicx}
\usepackage{caption}  
\usepackage{float}    
\usepackage{mathtools} 
\usepackage[colorlinks=false,
            linkbordercolor={0 0 1},   
            citebordercolor={0 1 0},   
            urlbordercolor={1 0 0}     
]{hyperref}
\numberwithin{equation}{section}
\usepackage{enumerate}
\newtheorem{theorem}{Theorem}[section]
\newtheorem{lemma}[theorem]{Lemma}

\newtheorem{corollary}[theorem]{Corollary}
\newtheorem{proposition}[theorem]{Proposition}

\theoremstyle{definition} 

\newtheoremstyle{remarkstyle} 
   {}   
   {}   
  {\normalfont}  
  {}      
  {\itshape}  
  {.}     
  { }     
  {}      
\theoremstyle{remarkstyle}
\newtheorem*{remark}{Remark} 

\allowdisplaybreaks[4]

\begin{document}

\author[K. Matsuzaki]{Katsuhiko Matsuzaki \textsuperscript{$a$}} 
\address{$a.$ Department of Mathematics, School of Education, Waseda University, Tokyo 169-8050, Japan} 
\email{matsuzak@waseda.jp} 

\author[F. Tao]{Fei Tao \textsuperscript{$b$}} 
\address{$b.$ Beijing International Center for Mathematical Research, Peking University, Beijing 100871, P. R. China} 
\email{ferrytau@pku.edu.cn}

\thanks{The first author is partially supported by Japan Society for the Promotion of Science (KAKENHI 23K25775 and 23K17656); the second author is partially supported by the National Key R \& D Program of China (2025YFA1017500)}

\subjclass[2020]{Primary 30H40, 30H35, 30H25; Secondary 30C62, 30L05, 30C35}

\title{Graph Weldings Associated with Functions in Zygmund, $\rm{BMO}$, $\mathrm{VMO}$, and $H^{1/2}$}
\keywords{strongly quasisymmetric, Zygmund function, chord-arc curve, $\rm{BMO}$, conformal welding, Weil--Petersson class}
\begin{abstract}
Let $f\colon \mathbb{R}\to\mathbb{R}$ be a continuous function. 
We define the graph welding associated with $f$ as the homeomorphism 
\[
\varphi = G^{-1}\circ F\colon \mathbb{R}\to\mathbb{R}.
\]
Here, $F(x)=x+if(x)$ parametrizes the graph of $f$, and $G$ is a conformal mapping from the upper half-plane $\mathbb{H}$ onto one of the two domains bounded by the graph of $f$, admitting a continuous extension to $\mathbb{R}$.
In this paper, we investigate how the regularity of the graph function $f$ influences the analytic properties of the associated graph welding $\varphi$. 
In particular, under certain assumptions that $f$ within Zygmund, $\mathrm{BMO}$, $\mathrm{VMO}$, and Hardy spaces, we establish results on absolute continuity, quasisymmetry, symmetry, strong quasisymmetry, strong symmetry, and the Weil--Petersson property of the associated graph welding $\varphi$. 
These results clarify the interplay between the regularity of graph functions, the geometry of graph curves, and the boundary behavior of conformal mappings.
\end{abstract}
\maketitle

\section{Introduction}

Let $\Gamma\subset\widehat{\mathbb C}=\mathbb{C}\cup\{\infty\}$ be a Jordan curve passing through $\infty$, and let $\Omega_+$ and $\Omega_-$ denote the complementary components of $\Gamma$. 
By the Riemann mapping theorem, there exist conformal mappings
\[
G_+\colon \mathbb H\to\Omega_+,
\qquad
G_-\colon \mathbb H^*\to\Omega_-
\]
such that $G_\pm(\infty)=\infty$, where $\mathbb H$ and $\mathbb H^*$ denote the upper and lower half-planes, respectively. 
By the Carath\'eodory theorem, the boundary extensions of $G_+$ and $G_-$ determine a homeomorphism
\[
h=G_-^{-1}\circ G_+|_{\mathbb{R}}\colon \mathbb{R}\to\mathbb{R},
\]
called the \emph{conformal welding} associated with $\Gamma$.

A fundamental theorem of Beurling and Ahlfors \cite{BA} states that a homeomorphism $h\colon\mathbb{R}\to\mathbb{R}$ arises as the conformal welding of a quasicircle if and only if $h$ is quasisymmetric (see below for the definition). 

Under the identification of the universal Teichm\"uller space with the quotient of the group of quasisymmetric homeomorphisms of $\mathbb{R}$ by $\mathrm{Aff}(\mathbb{R})$, the subgroup of all real affine mappings $z\mapsto az + b$, $a > 0$, $b\in\mathbb{R}$ (see, for example, \cite{Le}), many analytic subclasses of these homeomorphisms correspond naturally to both finer geometric properties of the associated curves and distinguished subspaces of the universal Teichm\"uller space.

Below, we summarize several important classes of quasisymmetric homeomorphisms, together with the corresponding curve classes and Teichm\"uller spaces.

\begin{itemize}
   \item Quasisymmetric class $\mathrm{QS}(\mathbb{R})$. An increasing homeomorphism $h\colon \mathbb{R}\to \mathbb{R}$ is \emph{quasisymmetric} if there exists a constant $M\ge1$ such that
      \[
      \frac1M\le\frac{h(x+t)-h(x)}{h(x)-h(x-t)}\leq M
      \]
      for all $x\in\mathbb{R}$ and $t>0$. The collection of all quasisymmetric homeomorphisms is denoted by $\mathrm{QS}(\mathbb{R})$.
   \begin{itemize}
      \item Curve geometry: quasicircles.
      \item Teichm\"uller space: the universal Teichm\"uller space
      \[
      T=\mathrm{Aff}(\mathbb{R})\backslash \mathrm{QS}(\mathbb{R}).
      \]
   \end{itemize}
      \item Symmetric class $\mathrm{S}(\mathbb{R})$. A quasisymmetric homeomorphism $h\colon \mathbb{R}\to \mathbb{R}$ is called \emph{symmetric} if
   \[
   \lim_{t\to0}\frac{h(x+t)-h(x)}{h(x)-h(x-t)}=1
   \]
   uniformly for $x\in\mathbb{R}$. The collection of all symmetric homeomorphisms is denoted by $\mathrm{S}(\mathbb{R})$. 
   This class was first studied in \cite{Ca}, in which Carleson investigated the absolute continuity of quasisymmetric homeomorphisms. 
   Subsequently, it was studied in depth by Gardiner and Sullivan \cite{GS} in connection with the little universal Teichm\"uller space.
   \begin{itemize}
      \item Curve geometry: asymptotically conformal quasicircles. This correspondence was established by Pommerenke \cite[Theorem 1]{Po78} in the unit-circle setting, and later extended to the real-line setting in the relative sense by Matsuzaki and Tao \cite[Theorem 1.1]{MT1}.
      \item Teichm\"uller space: the little universal Teichm\"uller space
         \[
         T_0=\mathrm{Aff}(\mathbb{R})\backslash \mathrm{S}(\mathbb{R}).
         \]
   \end{itemize}
   \item Strongly quasisymmetric class $\mathrm{SQS}(\mathbb{R})$. 
   A quasisymmetric homeomorphism \\$h\colon \mathbb{R}\to \mathbb{R}$ is \emph{strongly quasisymmetric} if it is locally absolutely continuous and $h'$ is a Muckenhoupt $A_\infty$-weight. 
   The collection of all strongly quasisymmetric homeomorphisms is denoted by $\mathrm{SQS}(\mathbb{R})$.
   \begin{itemize}
      \item Curve geometry: Bishop--Jones quasicircles. 
      The geometric characterization of this class is due to Bishop and Jones \cite[Theorem 4]{BJ} via the harmonic measures. 
      Furthermore, these quasicircles play an important role in the study of chord-arc curves; see, for instance, \cite{AG16,Sem88}.
      \item Teichm\"uller space: the $\mathrm{BMOA}$-Teichm\"uller space
         \[
         T_b=\mathrm{Aff}(\mathbb{R})\backslash \mathrm{SQS}(\mathbb{R}),
         \]
         which was introduced by Astala and Zinsmeister in \cite{AZ}. 
         This space naturally bridges conformal geometry and harmonic analysis, as its elements are characterized by Beltrami coefficients that induce Carleson measures.
   \end{itemize}
   \item Strongly symmetric class $\mathrm{SS}(\mathbb{R})$. 
   A strongly quasisymmetric homeomorphism $h\colon \mathbb{R}\to \mathbb{R}$ is \emph{strongly symmetric} if $\log h'\in \mathrm{VMO}(\mathbb{R})$, where $\mathrm{VMO}(\mathbb{R})$ denotes the space of functions of vanishing mean oscillation.
   The collection of all strongly symmetric homeomorphisms is denoted by $\mathrm{SS}(\mathbb{R})$.
   \begin{itemize}
      \item Curve geometry: asymptotically smooth quasicircles. 
      Shen and Wei \cite[Theorem 4.1]{SW} proved this correspondence for the unit circle, which Matsuzaki and Tao \cite[Theorem 1.2]{MT1} subsequently adapted to the real-line setting in the relative sense. 
      We see that $\mathrm{SS}(\mathbb{R})$ is contained in both $\mathrm{SQS}(\mathbb{R})$ and $\mathrm{S}(\mathbb{R})$. 
      However, the inclusion of $\mathrm{SS}(\mathbb{R})$ in $\mathrm{SQS}(\mathbb{R}) \cap \mathrm{S}(\mathbb{R})$ is strict as shown in \cite{MT0} after a slight modification of the construction.
      \item Teichm\"uller space: the $\mathrm{VMOA}$-Teichm\"uller space
      \[
      T_v=\mathrm{Aff}(\mathbb{R})\backslash \mathrm{SS}(\mathbb{R}).
      \]
   \end{itemize}
   \item Weil--Petersson class $\mathrm{WP}(\mathbb{R})$. 
   A quasisymmetric homeomorphism $h\colon \mathbb{R}\to \mathbb{R}$ belongs to the \emph{Weil--Petersson class} if it is locally absolutely continuous and $\log h'\in H^{1/2}(\mathbb{R})$, where $H^{1/2}(\mathbb{R})$ is the fractional Sobolev space.
   The collection of all such homeomorphisms is denoted by $\mathrm{WP}(\mathbb{R})$.
   \begin{itemize}
      \item Curve geometry: the Weil--Petersson quasicircles. 
      Bishop \cite{Bi25} described their geometry in terms of $\beta$-numbers, while the correspondence with conformal weldings was shown by Shen and Tang \cite{ST20}.
      \item Teichm\"uller space: the Weil--Petersson Teichm\"uller space 
      \[\mathcal{T}_0=\mathrm{Aff}(\mathbb{R})\backslash \mathrm{WP}(\mathbb{R}),\]
      this space was first investigated by Takhtajan and Teo in \cite{TT06}. 
      It carries a natural Hilbert manifold structure together with the Weil--Petersson metric, and plays an important role in Teichm\"uller theory, complex geometry, and mathematical physics.
   \end{itemize}
\end{itemize}

More generally, one may consider the $p$-Weil--Petersson classes $\mathrm{WP}_p(\mathbb{R})$ associated with the Besov regularity conditions. 

These subclasses of quasisymmetric homeomorphisms are closely related to the geometric behavior of the boundary curves and the regularity of the associated conformal weldings. 
They also admit analytic characterizations in terms of function spaces such as Bloch, little Bloch, $\mathrm{BMOA}$, $\mathrm{VMOA}$, Hardy spaces, and Besov spaces, and reflect increasingly refined geometric properties of the associated quasicircles and conformal mappings.

Motivated by these connections, it is natural to study conformal weldings associated with special classes of Jordan curves. 
Among all Jordan curves passing through $\infty$, graphs of real-valued continuous functions form one of the most natural and concrete classes. 

For the study of graph curves, we introduce the following notion of graph welding as a more suitable alternative to conformal welding. Let
\[
F(x)=x+if(x),
\qquad x\in\mathbb{R},
\]
where $f\colon \mathbb{R}\to\mathbb{R}$ is a continuous function, and let $\Gamma=F(\mathbb{R})$ be the graph of $f$.
Denote by $\Omega$ the domain lying above (or below) $\Gamma$, and let $G \colon \mathbb H\to\Omega$ be the conformal mapping with $G(\infty)=\infty$, meaning that $G(z) \to \infty$ as $z \to \infty$.
The boundary extension of $G$ induces an increasing homeomorphism
\[
\varphi = G^{-1}\circ F\colon \mathbb{R}\to\mathbb{R}.
\]
We call $\varphi$ the \emph{graph welding} associated with the function $f$. 
Once the choice of the domain above or below $\Gamma$ is fixed, the graph welding $\varphi$ is uniquely determined up to the pre-composition with an affine transformation.

Indeed, denote by $\Omega_+$ and $\Omega_-$ the domains lying above and below
$\Gamma$, respectively. Let
\[
G_+\colon\mathbb H\to\Omega_+,
\qquad
G_-\colon\mathbb H^*\to\Omega_-
\]
be conformal mappings satisfying $G_{\pm}(\infty)=\infty$. Define
\[
\varphi_+=G_+^{-1}\circ F,
\qquad
\varphi_-=G_-^{-1}\circ F,
\]
which are the graph weldings associated with $f$. 
It then follows that the conformal welding satisfies
\[
h=\varphi_-\circ\varphi_+^{-1},
\qquad
h^{-1}=\varphi_+\circ\varphi_-^{-1}.
\]

Consequently, the notion of graph welding provides a natural framework linking the analytic regularity of the graph function $f$, the geometry of the graph $\Gamma$, and the boundary behavior of conformal mappings.
The purpose of this paper is to investigate the graph weldings from the perspectives of absolute continuity, asymptotic symmetry, asymptotic smoothness, and the Weil--Petersson geometry.

The present work is closely related to the theory of quasisymmetric graphs, which was introduced and studied by Kovalev and Onninen \cite{KO}.
 
They showed, in particular, that the graph of every function in the classical Zygmund class is quasisymmetric, and more generally, that functions in a generalized Zygmund class associated with a doubling measure give rise to such graphs.  
 
A related viewpoint had already appeared in the work of Jerison and Kenig \cite{JK}: Zygmund domains are non-tangentially accessible domains, and in the plane this condition is equivalent, for simply connected domains, to the boundary being a quasicircle.

Our purpose is different from, but complementary to, this geometric theory.
The graph welding $\varphi=G^{-1}\circ F$ is precisely the change of parameter from the graph parametrization to the conformal parametrization. 
Thus, while the results of Kovalev and Onninen describe when the graph itself is quasisymmetric, the present paper studies how the analytic regularity of the graph function $f$ is reflected in the boundary homeomorphism $\varphi$. 
This leads naturally to finer classes of boundary correspondences such as symmetric, strongly quasisymmetric, strongly symmetric, and Weil--Petersson homeomorphisms.

The results in the $\mathrm{BMO}$ and $\mathrm{VMO}$ settings are also connected with the theory of chord-arc curves and $\mathrm{BMOA}$-Teichm\"uller spaces. 
A classical observation of Coifman and Meyer, used in the work of Jerison and Kenig \cite{JK, JK2}, asserts that graphs whose defining functions have $\mathrm{BMO}$ gradient satisfy a chord-arc type condition. 
This is one of the origins of the notion of $\mathrm{BMO}_1$ domains; their boundaries are the graphs of $f$ with $f'\in\mathrm{BMO}(\mathbb{R})$ in our case.
On the other hand, in the one-dimensional Teichm\"uller-theoretic setting mentioned above, there are characterizations of conformal welding via strongly (quasi)symmetric homeomorphisms.
Our theorems may be regarded as graph versions of these correspondences: under the $\mathrm{BMO}$ hypothesis on the derivative $f'$, the associated graph welding is strongly quasisymmetric; under the $\mathrm{VMO}$ hypothesis, it becomes strongly symmetric.
For the Weil--Petersson Teichm\"uller space, there is a similar correspondence between the classical setting and the graph version in terms of the Sobolev space $H^{1/2}$.

The paper is organized as follows.

Section~\ref{sec:preliminaries} recalls preliminary backgrounds on Bloch spaces, $\mathrm{BMOA}$ and $\mathrm{VMOA}$ spaces, Zygmund classes, asymptotically conformal and asymptotically smooth curves.

Section~\ref{sec:absolute_continuity} establishes localized versions of the Banach--Zaretskii theorem and the Riesz--Privalov theorem, and studies absolute continuity of graph weldings.

Section~\ref{sec:a_sufficient_condition_for_uniform_continuity} investigates a sufficient condition for the boundary extension of the conformal map to be uniformly continuous.

Section~\ref{sec:regularity_of_graph_weldings} is the core of the paper and contains our main results. We investigate graph weldings associated with Zygmund, $\mathrm{BMO}$, $\mathrm{VMO}$, Sobolev, and Besov regularity conditions and establish criteria for quasisymmetry, symmetry, strong quasisymmetry, strong symmetry, and the Weil--Petersson property. More precisely, we prove that Zygmund regularity of the graph function yields quasisymmetric graph weldings, that $\mathrm{BMO}$ and $\mathrm{VMO}$ regularity lead to strong quasisymmetry and strong symmetry, respectively, and that $H^{1/2}$ condition implies the Weil--Petersson property.

Finally, Section~\ref{sec:counterparts_in_conformal_welding} translates the results obtained for graph weldings into their counterparts for conformal weldings associated with graph curves. In particular, we derive criteria for conformal weldings to belong to the classes $\mathrm{QS}(\mathbb{R})$, $\mathrm{S}(\mathbb{R})$, $\mathrm{SQS}(\mathbb{R})$, $\mathrm{SS}(\mathbb{R})$, and $\mathrm{WP}(\mathbb{R})$.

\section{Preliminaries}\label{sec:preliminaries}

A locally integrable function $u \in L^1_{\mathrm{loc}}(\mathbb{R})$ is said to be of \emph{bounded mean oscillation} if
\[
\|u\|_{\mathrm{BMO}(\mathbb{R})}\coloneqq\sup_{I \subset \mathbb{R}}\frac{1}{|I|}\int_I |u(x) - u_I|\,dx< \infty,
\]
where the supremum is taken over all bounded intervals $I \subset \mathbb{R}$, 
and $u_I$ denotes the integral mean of $u$ over $I$. 
The collection of such functions is denoted by $\mathrm{BMO}(\mathbb{R})$, which becomes a Banach space after identifying functions that differ by a constant. 
A function $u \in \mathrm{BMO}(\mathbb{R})$ is said to be of \emph{vanishing mean oscillation} if
\[
\lim_{|I| \to 0}\frac{1}{|I|}\int_I |u(x) - u_I|\,dx= 0
\]
uniformly. 
The set of all such functions is denoted by $\mathrm{VMO}(\mathbb{R})$. 
This is a closed subspace of $\mathrm{BMO}(\mathbb{R})$.

Let $\mathbb{H}=\{z\in\mathbb{C}:\operatorname{Im} z>0\}$ be the upper half-plane. 
A holomorphic function $\Phi$ on the upper half-plane $\mathbb H$ is called a \emph{Bloch function} if it satisfies 
\[
\Vert \Phi \Vert_B\coloneqq\sup_{z \in \mathbb H}\,(\operatorname{Im} z)|\Phi'(z)| <\infty.
\]
The Banach space of all Bloch functions (modulo additive constants) with norm $\Vert \Phi \Vert_B$
is denoted by $B(\mathbb H)$. If $\Phi \in B(\mathbb H)$ vanishes at the boundary in the sense that 
\[
\lim_{\operatorname{Im} z \to 0_+}(\operatorname{Im} z)|\Phi'(z)| =0
\]
uniformly for $\operatorname{Re}\,z\in\mathbb{R}$, then $\Phi$ is called a \emph{little Bloch function}. The collection of all such functions forms a closed subspace of $B(\mathbb H)$, denoted by $B_0(\mathbb H)$. 
A holomorphic function $\Phi$ on $\mathbb{H}$ belongs to $\mathrm{BMOA}(\mathbb{H})$ if its boundary values, taken in the sense of non-tangential limits, lie in $\mathrm{BMO}(\mathbb{R})$. 
It belongs to $\mathrm{VMOA}(\mathbb{H})$ if, in addition, its boundary values lie in $\mathrm{VMO}(\mathbb{R})$. 
Equivalently, $\Phi \in \mathrm{BMOA}(\mathbb{H})$ (or $\mathrm{VMOA}(\mathbb{H})$) if and only if the measure $|\Phi'(z)|^{2} y\, dx\, dy$ is a Carleson measure (respectively, a vanishing Carleson measure) on $\mathbb{H}$.

A continuous function $f\colon \mathbb{R}\to\mathbb{R}$ is said to belong to the \emph{Zygmund class} $\Lambda_*(\mathbb{R})$ if
\[
\|f\|_{\Lambda_*(\mathbb{R})}\coloneqq\sup_{x\in\mathbb{R},\,h\neq 0}\frac{|f(x+h)+f(x-h)-2f(x)|}{|h|}<\infty .
\]
The quantity $\|f\|_{\Lambda_*(\mathbb{R})}$ is called the \emph{Zygmund seminorm} (or \emph{Zygmund constant}) of $f$. 
A function $f \in \Lambda_*(\mathbb{R})$ is said to belong to the \emph{little Zygmund class} $\lambda_*(\mathbb{R})$ if
\[
\lim_{h \to 0}\sup_{x \in \mathbb{R}}\frac{|f(x+h) + f(x-h) - 2f(x)|}{|h|}= 0.
\]
Strengthening this condition, we introduce a new class in the present paper:
we say that $f \in \Lambda_*(\mathbb{R})$ lies in the \emph{strongly little Zygmund class} $\lambda^{S}_*(\mathbb{R})$ if, for every $\varepsilon>0$ there exists $\delta=\delta(\varepsilon)>0$ such that for all $h\in\mathbb{R}$ with $|h|\leq \delta$, for all $x\in\mathbb{R}$, and for all $\lambda,\mu>0$ satisfying $\lambda+\mu=2$, one has
\[
\left|\lambda\bigl(f(x+\mu h)-f(x)\bigr)-\mu\bigl(f(x)-f(x-\lambda h)\bigr)\right|\leq \varepsilon |h|.
\]

We recall the definitions of the Muckenhoupt $A_{\infty}$ class and the fractional Sobolev space $H^{1/2}(\mathbb{R})$ for later use.
A locally integrable function $\omega\colon\mathbb{R}\to[0,\infty)$ is said to belong to $A_{\infty}$ if there exist constants $C>0$ and $\delta>0$ such that for every interval $I\subset\mathbb{R}$ and every measurable set $E\subset I$,
\[
\frac{\omega(E)}{\omega(I)}\leq C\left(\frac{|E|}{|I|}\right)^{\delta},
\]
where $\omega(E)=\int_E \omega(x)\,dx$ (see \cite{CF}).

The fractional Sobolev space $H^{1/2}(\mathbb{R})$ consists of all functions $u\in L^1_{\mathrm{loc}}(\mathbb{R})$ such that
\[
\iint_{\mathbb{R}\times\mathbb{R}}
\frac{|u(x)-u(y)|^2}{|x-y|^2}\,dx\,dy<\infty.
\]

A \emph{quasisymmetric embedding} is an embedding $F\colon X \to Y$ between metric spaces such that there exists an increasing homeomorphism $\eta\colon [0,\infty) \to [0,\infty)$ so that
\begin{equation}\label{eq:qs}
\frac{|x-a|}{|x-b|} \leq t \implies \frac{|F(x)-F(a)|}{|F(x)-F(b)|} \leq \eta(t)
\end{equation}
for all distinct points $x,a,b \in X$. Here, the distance in each metric space is simply denoted by $|a-b|$.
It follows directly from \eqref{eq:qs} that the inverse of a quasisymmetric embedding defined on its image is quasisymmetric and that the composition of quasisymmetric embeddings is again quasisymmetric.

Quasisymmetry was introduced by Beurling and Ahlfors \cite{BA} in their study of the boundary correspondence of quasiconformal self-maps of the upper half-plane. 
Later, Tukia and V\"ais\"al\"a \cite{TV} formulated a general definition of quasisymmetry in the setting of metric spaces.

In particular, if $F\colon \mathbb{R}\to\mathbb{C}$ is a quasisymmetric embedding, then its image $F(\mathbb{R})$ is a quasicircle (see \cite{Ah}). 
Equivalently, $F(\mathbb{R})$ can be realized as the image of the real line $\mathbb{R}$ under a quasiconformal self-map of the complex plane $\mathbb C$.

A quasisymmetric embedding $F\colon  X \to Y$ between metric spaces is called \emph{asymptotically symmetric} if for all $\varepsilon > 0$ and $t > 0$ there exists a $\delta > 0$ such that for all distinct points $x, a, b \in X$ contained in a ball of radius $\delta$, one has
\begin{equation*}
\frac{|x - a|}{|x - b|} \leq t \implies \frac{|F(x) - F(a)|}{|F(x) - F(b)|} \leq (1 + \varepsilon) t . 
\end{equation*}

The notion of asymptotically symmetric embeddings was first introduced by Brania and Yang \cite{BY}, who did not assume a priori that such embeddings are quasisymmetric.
In contrast, throughout this paper, we require an asymptotically symmetric embedding to be quasisymmetric.

Let $\Gamma$ be a Jordan curve passing through $\infty$. 
For distinct points $a, b \in \Gamma$, the subarc between $a$ and $b$ with smaller diameter is denoted by $\wideparen{ab}$, and its length is denoted by $\ell(\wideparen{ab})$ when it is rectifiable. 
A quasicircle $\Gamma$ is said to be \emph{asymptotically conformal} or \emph{asymptotically symmetric} if
\[
\lim_{|a-b| \to 0} \max_{w \in \wideparen{ab}} \frac{|a-w|+|w-b|}{|a-b|}=1
\]
for any $a,b \in \Gamma$. 
A locally rectifiable curve $\Gamma$ is called \emph{chord-arc} if there exists $C \geq 1$ such that $\ell(\wideparen{ab})/|a-b| \leq C$. 
This definition remains equivalent if the Euclidean distance $|a - b|$ is replaced by the spherical distance $d_{\widehat{\mathbb{C}}}(a,b)$. 
In particular, the chord-arc property for a Jordan curve $\Gamma \subset \widehat{\mathbb{C}}$ is invariant under M\"obius transformations (see \cite[p.~877]{Mac} and \cite[Section~7]{Tu}). 
A chord-arc curve $\Gamma$ is said to be \emph{asymptotically smooth} if the chord-arc constant tends to $1$ uniformly at small scales. 
More precisely, the curve $\Gamma$ satisfies
\[
\lim_{|a-b| \to 0} \frac{\ell(\wideparen{a b})}{|a - b|} = 1
\]
uniformly. 
The above definition of asymptotic smoothness is the natural extension to curves passing through $\infty$ of the corresponding notion for bounded Jordan curves introduced by Pommerenke \cite{Po78}.

\section{Absolute Continuity}\label{sec:absolute_continuity}

We recall several localized concepts on $\mathbb{R}$. Let $f\colon\mathbb{R}\to\mathbb{R}$ be a function.

\begin{enumerate}[(i)]
\item
We say that $f$ is \emph{locally absolutely continuous} if for every compact interval $[c,d]\subset\mathbb{R}$, the restriction $f|_{[c,d]}$ is absolutely continuous on $[c,d]$.

\item
We say that $f$ is \emph{locally of bounded variation} if for every compact interval $[c,d]\subset\mathbb{R}$, the total variation of $f$ on $[c,d]$, denoted by $V_c^d(f)$, is finite.

\item
We say that $f$ has Lusin's property $(N)$ if for every Lebesgue null set $A\subset\mathbb{R}$ (i.e.\ $|A|=0$), its image $f(A)$ also has Lebesgue measure zero, i.e.\ $|f(A)|=0$.
\end{enumerate}

The following theorem is a localized version of the classical Banach--Zaretskii theorem on $\mathbb{R}$. It follows immediately by applying the classical result to each compact interval $[c,d]\subset\mathbb{R}$.

\begin{proposition}[Localized Banach--Zaretskii theorem on $\mathbb{R}$]\label{local_BZ on R:label}
Let $f\colon\mathbb{R}\to\mathbb{R}$ be a continuous function. 
Then the following are equivalent:
\begin{enumerate}[(1)]
\item $f$ is locally absolutely continuous;

\item $f$ is locally of bounded variation and has Lusin's property $(N)$;

\item $f$ is differentiable almost everywhere, $f'\in L^1_{\mathrm{loc}}(\mathbb{R})$, and $f$ has Lusin's property $(N)$. 
\end{enumerate}

Moreover, under these equivalent conditions, for all $a,x\in\mathbb{R}$,
\[
f(x)=f(a)+\int_a^x f'(t)\,dt.
\]
\end{proposition}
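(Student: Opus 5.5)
The plan is to reduce everything to the classical Banach--Zaretskii theorem on a compact interval, applied to every $[c,d]\subset\mathbb{R}$. Recall the classical statement: a continuous $f$ on $[c,d]$ is absolutely continuous iff it is of bounded variation on $[c,d]$ and has property $(N)$ on $[c,d]$. Lebesgue's theorem and the fundamental theorem of calculus for absolutely continuous functions supply the remaining ingredients.

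\textbf{(1)$\Leftrightarrow$(2).} Since local absolute continuity and local bounded variation are defined intervalwise, only property $(N)$ needs a localization argument.
\begin{itemize}
\item Given a null set $A\subset\mathbb{R}$, write $A=\bigcup_{n\in\mathbb{Z}} A\cap[n,n+1]$.
\item If $f$ has $(N)$ on each $[n,n+1]$, then $f(A)$ is a countable union of null sets, hence null.
\item Conversely, global $(N)$ restricts trivially to each $[c,d]$.
\end{itemize}
With this, (1)$\Leftrightarrow$(2) follows directly from the classical theorem on each $[c,d]$.

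\textbf{(1)$\Rightarrow$(3).} On each $[c,d]$ an absolutely continuous $f$ is differentiable a.e. with $f'\in L^1([c,d])$, and it has $(N)$. Covering $\mathbb{R}$ by countably many intervals gives a.e.\ differentiability, $f'\in L^1_{\mathrm{loc}}$, and global $(N)$ as above.

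\textbf{(3)$\Rightarrow$(1).} This is the step needing real care: we must show $f$ is of bounded variation on $[c,d]$, after which (2) and hence (1) apply. First try to cite the known refinement of Banach--Zaretskii: a continuous function with $(N)$ that is differentiable a.e.\ with $f'\in L^1$ is absolutely continuous. If a self-contained argument is needed, proceed as follows.
\begin{itemize}
\item Let $E\subset[c,d]$ be the set where $f$ is differentiable, so $|[c,d]\setminus E|=0$.
\item By $(N)$, $|f([c,d]\setminus E)|=0$.
\item The standard estimate $|f(S)|\le\int_S|f'|$ for measurable $S\subset E$ gives, for any $[\alpha,\beta]\subset[c,d]$,
\[
|f(\beta)-f(\alpha)|\le|f([\alpha,\beta])|\le\int_\alpha^\beta|f'(t)|\,dt .
\]
The first inequality uses continuity: $f([\alpha,\beta])$ is an interval containing $f(\alpha)$ and $f(\beta)$.
\end{itemize}
Summing this bound over partitions gives $V_c^d(f)\le\int_c^d|f'|<\infty$, which yields absolute continuity. (The same display directly shows absolute continuity, by the absolute continuity of the integral.)

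\textbf{The final formula.} Apply the fundamental theorem of calculus for absolutely continuous functions on the interval between $a$ and $x$. This gives $f(x)=f(a)+\int_a^x f'(t)\,dt$ for all $a,x\in\mathbb{R}$.
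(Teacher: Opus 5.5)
Your proposal is correct and follows the same route as the paper, which simply applies the classical Banach--Zaretskii theorem on each compact interval $[c,d]\subset\mathbb{R}$. Your additions are valid elaborations of details the paper leaves implicit: the countable-union localization of property $(N)$, and the estimate $|f([\alpha,\beta])|\le\int_\alpha^\beta|f'(t)|\,dt$ for (3)$\Rightarrow$(1).
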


We next generalize the classical Banach--Zaretskii theorem to rectifiable curves. 
Let $\Gamma\subset\mathbb{C}$ be a rectifiable curve, and let $\gamma\colon I\to\Gamma$ be an arc-length parametrization, where $I=[0,\ell(\Gamma)]\subset\mathbb{R}$.

\begin{proposition}[Banach--Zaretskii theorem on rectifiable curves]\label{BZ on rect curves:label}
Let $g\colon\Gamma\to\mathbb{R}$ be a continuous function. Then the following are equivalent:
\begin{enumerate}[(1)]
\item $g$ is absolutely continuous with respect to arc-length on $\Gamma$, 
i.e.\ $g\circ\gamma$ is absolutely continuous on $I$;

\item $g$ is of bounded variation with respect to arc-length on $\Gamma$, i.e.\ $g\circ\gamma$ is of bounded variation on $I$, and $g$ has Lusin's property $(N)$ with respect to arc-length measure;

\item $g\circ\gamma$ is differentiable almost everywhere on $I$, 
$(g\circ\gamma)'\in L^1(I)$, and $g$ has Lusin's property $(N)$ with respect to arc-length measure. 
\end{enumerate}

Moreover, under these equivalent conditions, for all $s,t\in I$,
\[
g(\gamma(t)) = g(\gamma(s)) + \int_s^t (g\circ\gamma)'(u)\,du.
\]
\end{proposition}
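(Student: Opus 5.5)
The plan is to reduce the statement to the classical Banach--Zaretskii theorem on the compact interval $I=[0,\ell(\Gamma)]$, applied to the real-valued continuous function $u\coloneqq g\circ\gamma\colon I\to\mathbb{R}$. The point is that every condition in the proposition is phrased either directly in terms of $u$ or in terms of arc-length measure on $\Gamma$. Since $\gamma$ is an arc-length parametrization, arc-length measure on $\Gamma$ is the push-forward of Lebesgue measure on $I$ under $\gamma$, so this is a translation exercise.

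First I will record the properties of $\gamma$. It is $1$-Lipschitz, since $|\gamma(t)-\gamma(s)|\le |t-s|$. For any interval $J\subset I$ we have $\ell(\gamma(J))=|J|$. If $\Gamma$ is a Jordan arc (the relevant case here, being a subarc of a Jordan curve), then $\gamma$ is a homeomorphism onto $\Gamma$. By outer regularity, the arc-length (one-dimensional Hausdorff) measure $\mathcal{H}^1(\gamma(A))$ equals $|A|$ for every Borel set $A\subset I$, and null sets correspond under $\gamma$. Hence a set $E\subset\Gamma$ is arc-length null if and only if $\gamma^{-1}(E)$ is Lebesgue null.

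Next comes the key step of transferring Lusin's property $(N)$. Property $(N)$ for $g$ with respect to arc-length means that $|g(E)|=0$ whenever $E\subset\Gamma$ has arc-length measure zero. Writing $E=\gamma(A)$ with $A=\gamma^{-1}(E)$, we get $g(E)=u(A)$. By the correspondence of null sets above, this is exactly Lusin's property $(N)$ for $u$ on $I$. This is the only point where care is needed: one must check that the correspondence of null sets holds for arbitrary (not just Borel) sets. This follows because $\gamma$ preserves outer measure on intervals, hence on countable unions of intervals, hence on outer measures of arbitrary sets.

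With this dictionary, conditions (1), (2) and (3) of the proposition are verbatim conditions (1), (2) and (3) of the classical Banach--Zaretskii theorem for the continuous function $u$ on the compact interval $I$. That is, Proposition~\ref{local_BZ on R:label} applied on $[c,d]=I$ (extend $u$ constantly outside $I$ if one wishes to use the $\mathbb{R}$ version literally). Their equivalence, together with the formula $u(t)=u(s)+\int_s^t u'(\tau)\,d\tau$, yields the stated identity for $g(\gamma(t))$.
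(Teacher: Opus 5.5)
Your proposal is correct and follows the paper's proof. Both set $u=g\circ\gamma$ on $I=[0,\ell(\Gamma)]$, note that absolute continuity, bounded variation and Lusin's property $(N)$ with respect to arc-length on $\Gamma$ are exactly the corresponding notions for $u$ on $I$, and then apply the classical Banach--Zaretskii theorem; your check that arc-length null sets on $\Gamma$ correspond to Lebesgue null sets in $I$ under $\gamma$ makes explicit what the paper asserts in one line.
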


\begin{proof}
Since $\Gamma$ is rectifiable, it admits an arc-length parametrization $\gamma\colon I\to\Gamma$. 

Define $h\coloneqq g\circ\gamma\colon I\to\mathbb{R}$. Then $h$ is continuous on $I$. 
Moreover, absolute continuity, bounded variation, and Lusin's property $(N)$ on $\Gamma$ (with respect to arc-length measure) correspond exactly to the usual notions for $h$ on $I$.

Therefore, the result follows directly from the classical Banach--Zaretskii theorem applied to $h$ on $I$.
\end{proof}

Combining Propositions~\ref{local_BZ on R:label} and \ref{BZ on rect curves:label}, we obtain the Banach--Zaretskii theorem on locally rectifiable curves.

We next show that, under a regularity assumption, the inverse mapping inherits absolute continuity from the forward mapping.
\begin{lemma}\label{local inverse AC:label}
Let $G\colon\mathbb{R}\to\Gamma\subset\mathbb{C}$ be a homeomorphism. 
Assume that $G$ is locally absolutely continuous on $\mathbb{R}$ and $G^{-1}\colon\Gamma\to\mathbb{R}$ has Lusin's property $(N)$ with respect to arc-length measure on $\Gamma$. Then $G^{-1}$ is locally absolutely continuous on $\Gamma$.
\end{lemma}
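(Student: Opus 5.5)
Our plan is to reduce everything to the Banach--Zaretskii theorem on rectifiable curves (Proposition~\ref{BZ on rect curves:label}), applied to the real-valued function $g=G^{-1}$ on compact subarcs of $\Gamma$.

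\emph{Reduction to a compact subarc.} Fix a compact subarc $\Gamma_0=G([c,d])$ with $c<d$. Since $G$ is locally absolutely continuous, $G|_{[c,d]}$ is absolutely continuous. In particular it has bounded variation, so $\Gamma_0$ is rectifiable with $\ell(\Gamma_0)=V_c^d(G)<\infty$. Every compact subarc of $\Gamma$ is of this form because $G$ is a homeomorphism. Hence $\Gamma$ is locally rectifiable, and it suffices to show that $G^{-1}|_{\Gamma_0}$ is absolutely continuous with respect to arc length.

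\emph{Verifying condition (2).} Let $\gamma\colon[0,L]\to\Gamma_0$ be the arc-length parametrization, oriented so that $\gamma(0)=G(c)$, and set $h=G^{-1}\circ\gamma\colon[0,L]\to[c,d]$.
\begin{itemize}
\item $h$ is continuous, being a composition of homeomorphisms.
\item $h$ is injective and maps onto $[c,d]$, so it is strictly monotone, hence increasing given the chosen orientation.
\item A monotone function on a compact interval has bounded variation, with $V_0^L(h)=d-c$.
\item By hypothesis $G^{-1}$ has Lusin's property $(N)$ with respect to arc-length measure, which is exactly property $(N)$ for $h$.
\end{itemize}
Thus condition (2) of Proposition~\ref{BZ on rect curves:label} holds, and therefore condition (1) holds: $h$ is absolutely continuous on $[0,L]$. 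Since $\Gamma_0$ was arbitrary, $G^{-1}$ is locally absolutely continuous on $\Gamma$.

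\emph{Where care is needed.} The only delicate points are bookkeeping rather than analysis. First, one must check that "locally absolutely continuous on $\Gamma$" means absolute continuity of $G^{-1}\circ\gamma$ on each compact subarc. Second, this notion should not depend on how the subarcs are chosen. Any local arc-length parametrization differs from another only by a translation of the parameter, so the notion is well defined, and the localized version follows as in Proposition~\ref{local_BZ on R:label}.

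The absolute continuity of $G$ itself is used only to guarantee local rectifiability, so that arc-length measure and Proposition~\ref{BZ on rect curves:label} make sense. Monotonicity supplies the bounded variation for free, so no genuine obstacle is expected.
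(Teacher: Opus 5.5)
Your proposal is correct and follows essentially the same route as the paper: restrict to $G(I)$ for a compact interval $I$, observe that $G^{-1}\circ\gamma$ (which the paper writes as $\phi^{-1}$ with $G=\gamma\circ\phi$) is continuous and increasing, hence of bounded variation, transfer Lusin's property $(N)$ through the arc-length parametrization, and conclude by the Banach--Zaretskii theorem. The only cosmetic difference is that the paper justifies the transfer of property $(N)$ explicitly: since $\gamma$ is Lipschitz, it sends Lebesgue null sets to arc-length null sets. You absorb this step into the identification of arc-length measure on $\Gamma_0$ with Lebesgue measure on the parameter interval.
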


\begin{proof}
Let $I\subset\mathbb{R}$ be a compact interval and let $\Gamma_I\coloneqq G(I)$. 
In particular, $G'$ exists almost everywhere on $I$ and $\Gamma_I$ is a rectifiable curve with length
\[
\ell(\Gamma_I)=\int_I |G'(t)|\,dt.
\]

Let $\gamma\colon[0,\ell(\Gamma_I)]\to\Gamma_I$ be an arc-length parametrization. 
Then there exists an increasing absolutely continuous function $\phi\colon I\to[0,\ell(\Gamma_I)]$ such that $G=\gamma\circ\phi$ on $I$, and hence $G^{-1}\circ\gamma=\phi^{-1}$.
Since $\phi$ is increasing and continuous, its inverse $\phi^{-1}$ is also continuous and increasing, and thus of bounded variation.

To prove the absolute continuity of $\phi^{-1}$, it suffices to verify that it has Lusin's property $(N)$. 
Let $A\subset[0,\ell(\Gamma_I)]$ be a Lebesgue null set. 
Since $\gamma$ is Lipschitz continuous, it maps Lebesgue null sets to arc-length null sets, and hence $\gamma(A)$ has arc-length measure zero in $\Gamma_I$. 
By Lusin's property $(N)$ of $G^{-1}$,
\[
|\phi^{-1}(A)| = |G^{-1}(\gamma(A))| = 0.
\]
Thus $\phi^{-1}$ satisfies Lusin's property $(N)$.

Applying the Banach--Zaretskii theorem, we conclude that $\phi^{-1}$ is absolutely continuous on $[0,\ell(\Gamma_I)]$. 
Therefore, $G^{-1}$ is absolutely continuous on $\Gamma_I$ by Proposition~\ref{BZ on rect curves:label}. 
\end{proof}

In the setting of conformal boundary parametrizations on the unit circle $\mathbb{T}$, the Banach--Zaretskii theorem reduces absolute continuity to bounded variation. 
Later, we will extend this claim to the real line $\mathbb{R}$.

\begin{lemma}[{\cite[Exercise 2, p.~138]{Pom}}]\label{ac-bv}
Let $\Phi\colon \mathbb{D}\to\Omega$ be a conformal mapping onto a bounded Jordan domain. 
Then
\[
\Phi|_{\mathbb{T}} \text{ is absolutely continuous }
\iff 
\Phi|_{\mathbb{T}} \text{ has bounded variation}.
\]
\end{lemma}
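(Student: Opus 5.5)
The forward implication is immediate: an absolutely continuous function on the compact set $\mathbb{T}$ (parametrized by $\theta\in[0,2\pi]$) has bounded variation. So the plan concentrates on the converse. Assume $\Phi|_{\mathbb{T}}$ has bounded variation. Then $\partial\Omega=\Phi(\mathbb{T})$ is a rectifiable Jordan curve, and the plan is to show that $\theta\mapsto\Phi(e^{i\theta})$ is absolutely continuous on $[0,2\pi]$.

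By the Banach--Zaretskii theorem (in the form of Proposition~\ref{local_BZ on R:label}, applied to the real and imaginary parts on $[0,2\pi]$), it suffices to verify Lusin's property $(N)$. That is, if $A\subset\mathbb{T}$ has linear measure zero, then $\Phi(A)$ has linear measure zero. Since the real and imaginary parts are $1$-Lipschitz functions of the point of the curve, this follows once $\Phi(A)$ has zero arc-length measure on $\partial\Omega$.

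The key input is the theorem of F.~and M.~Riesz, in the form valid for rectifiable Jordan domains. If $\partial\Omega$ is rectifiable, then $\Phi'\in H^1(\mathbb{D})$, and $\Phi|_{\mathbb{T}}$ is absolutely continuous with
\[
\frac{d}{d\theta}\Phi(e^{i\theta})=ie^{i\theta}\Phi'(e^{i\theta})
\]
almost everywhere. Equivalently, harmonic measure and arc-length on $\partial\Omega$ are mutually absolutely continuous.

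The derivation I would give runs as follows. First, bounded variation of $\Phi|_{\mathbb{T}}$ makes the Fourier coefficients of the boundary function satisfy $\hat\Phi(n)=O(1/|n|)$, which gives $\sup_r\int|\Phi'(re^{i\theta})|\,d\theta<\infty$. To see this, note that the Poisson extension of the measure $d\Phi(e^{i\theta})$ is exactly $\frac{\partial}{\partial\theta}\Phi(re^{i\theta})=ire^{i\theta}\Phi'(re^{i\theta})$, and its $L^1$ norms are bounded by the total variation. Hence $\Phi'\in H^1$. Second, by the F.~and M.~Riesz theorem, the complex measure $d\Phi$, whose Poisson integral is analytic, must be absolutely continuous. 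Its density is $ie^{i\theta}\Phi'(e^{i\theta})$, and therefore $\Phi(e^{i\theta})=\Phi(1)+\int_0^\theta ie^{it}\Phi'(e^{it})\,dt$. This already gives absolute continuity directly, making the Lusin $(N)$ reduction optional.

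The main obstacle is the step identifying $d\Phi$ as an analytic measure, i.e.\ showing that its negative Fourier coefficients vanish. This holds because $\Phi$ is holomorphic, so the Fourier coefficients of $\Phi|_{\mathbb{T}}$ vanish for negative indices. The same then holds for the coefficients of its distributional derivative, namely $in\hat\Phi(n)$. The F.~and M.~Riesz theorem then applies.
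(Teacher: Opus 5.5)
Your proof is correct and follows essentially the paper's route. Bounded variation gives $\Phi'\in H^1(\mathbb{D})$, and the F.~and M.~Riesz theorem gives the integral representation; it also underlies the paper's step from $\Phi'\in H^1$ to $\Phi(e^{it})=\Phi(1)+\int_0^t\phi(s)\,ds$. The only difference is that you get the $H^1$ bound directly from the Poisson integral of the measure $d\Phi(e^{i\theta})$ instead of citing the Riesz--Privalov theorem, and you apply F.~and M.~Riesz to that analytic measure explicitly. One small correction: the decay $\hat\Phi(n)=O(1/|n|)$ alone would not give $\Phi'\in H^1$ (think of $-\log(1-z)$). What actually does the work is bounding the $L^1$ norms of that Poisson integral by the total variation, and the F.~and M.~Riesz step by itself already suffices.
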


\begin{proof}
We include a brief proof for completeness. 
Since $\Omega$ is a Jordan domain, the Carath\'eodory theorem ensures that $\Phi$ extends to a homeomorphism from $\overline{\mathbb{D}}$ onto $\overline{\Omega}$.

If $\Phi|_{\mathbb{T}}$ is absolutely continuous, then it is of bounded variation.

Conversely, assume that $\Phi|_{\mathbb{T}}$ has bounded variation. 
Then the image curve $\Gamma=\Phi(\mathbb{T})$ is rectifiable, since its length is exactly the total variation of $t \mapsto \Phi(e^{it})$ on $[0,2\pi]$.
It follows from the Riesz--Privalov Theorem \cite[p.~134]{Pom} that the rectifiability of $\Gamma$ implies $\Phi'\in H^1(\mathbb{D})$, that is,
\[
\sup_{0<r<1}\int_0^{2\pi} |\Phi'(re^{it})|\,dt < \infty.
\]
In particular, the radial limits (in fact, the non-tangential limits) of $\Phi'$ exist almost everywhere and define an $L^1$ function $\phi$ on $[0,2\pi]$, given by
\[
\phi(t)=\lim_{r\to1^-} i e^{it} \Phi'(re^{it}).
\]
Consequently, for every $t\in[0,2\pi]$,
\[
\Phi(e^{it}) = \Phi(1) + \int_0^t \phi(s)\,ds,
\]
which shows that $\Phi|_{\mathbb{T}}$ is absolutely continuous.
\end{proof}

The Riesz--Privalov theorem \cite[Theorem 6.8]{Pom} characterizes rectifiability of the boundary in terms of the Hardy space. 
By passing from the unit disk to the upper half-plane, one obtains the following version of the Riesz--Privalov theorem for possibly unbounded Jordan domains.

Let $G$ be holomorphic in the upper half-plane $\mathbb H$. 
We say that $G$ belongs to the local Hardy space $H^1_{\mathrm{loc}}(\mathbb H)$ if for every compact interval $I\subset\mathbb{R}$,
\[
\limsup_{0<y<1}\int_I |G(x+iy)|\,dx<\infty.
\]

\begin{theorem}\label{RP-UHP:label}
Let $G\colon \mathbb H\to\Omega$ be a conformal mapping onto an unbounded Jordan domain $\Omega$ with $G(\infty)=\infty$, and let $\Gamma=\partial\Omega$. 
Then the following are equivalent:
\begin{enumerate}
\item $\Gamma$ is locally rectifiable;
\item $G'\in H^1_{\mathrm{loc}}(\mathbb H)$.
\end{enumerate}

Moreover, if these conditions hold, then for every measurable set $E\subset\mathbb{R}$,
\[
|E|=0 \iff \ell(G(E))=0,
\]
where $\ell$ also denotes the arc-length measure.
\end{theorem}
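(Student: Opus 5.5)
The plan is to reduce the statement to the classical Riesz--Privalov theorem on the disk \cite[Theorem 6.8]{Pom}, applied to a family of bounded Jordan subdomains of $\Omega$ that exhaust it near any given compact piece of the boundary. Fix a compact interval $I=[a,b]\subset\mathbb{R}$ and choose a larger interval $J=[a-1,b+1]$. Let $Q=\{x+iy: x\in J,\ 0<y<1\}$ be the rectangle in $\mathbb H$. Since $G$ extends to a homeomorphism of $\overline{\mathbb H}$ onto $\overline\Omega$ (Carath\'eodory, with $\infty\mapsto\infty$), the image $D=G(Q)$ is a bounded Jordan domain. Its boundary consists of the arc $G(J)\subset\Gamma$ together with the analytic arcs $G(\partial Q\cap\mathbb H)$. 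Let $\psi\colon\mathbb D\to Q$ be a Riemann map. Then $\Phi=G\circ\psi\colon\mathbb D\to D$ is conformal onto a bounded Jordan domain.

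For (1)$\Rightarrow$(2), suppose $\Gamma$ is locally rectifiable. Then $G(J)$ is rectifiable. The other three sides of $\partial D$ are images of compact segments inside $\mathbb H$ under $G$, except near their endpoints on $\mathbb R$, where they approach $G(a-1)$ and $G(b+1)$. The main obstacle is this corner issue: rectifiability of $\partial D$ requires the vertical sides $G(\{a-1\}\times(0,1])$ to have finite length, and that is not automatic. I would avoid the problem by choosing the vertical sides well. For almost every $x_0$, the curve $y\mapsto G(x_0+iy)$ has finite length by a Fubini/area argument: $\int_J\int_0^1|G'(x+iy)|\,dy\,dx \le \bigl(\text{area}\bigr)^{1/2}\bigl(\int\!\!\int y^{-1}\bigr)^{1/2}$ is too crude, so instead I would use the Beurling-type estimate that a conformal map has radial curves of finite length except on a set of capacity zero. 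Hence one can pick $a'\in(a-1,a)$ and $b'\in(b,b+1)$ with finite-length vertical images, and replace $Q$ by $[a',b']\times(0,1)$. Then $\partial D$ is rectifiable, and Riesz--Privalov gives $\Phi'\in H^1(\mathbb D)$. Near the boundary segment $I$, the map $\psi^{-1}$ is conformal and extends analytically across the interior of the bottom side by Schwarz reflection, with $|\psi'|$ bounded above and below on a neighbourhood of $I$. Writing $G'=\Phi'(\psi^{-1})\,(\psi^{-1})'$ and changing variables transfers the $H^1$ bound to $\sup_{0<y<\epsilon}\int_I|G'(x+iy)|\,dx<\infty$. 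For $\epsilon\le y<1$ the integral is bounded by compactness.

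For (2)$\Rightarrow$(1), the bound on $\int_I|G'(x+iy)|\,dx$ gives $G'(x+iy)\to g(x)$ weakly (as measures) along subsequences. Moreover, $G(x+iy)-G(a+iy)=\int_a^x G'(t+iy)\,dt$. Letting $y\to0$ shows that $G|_I$ has bounded variation, with variation at most $\liminf_{y\to 0}\int_I|G'|$. Hence $G(I)$ is rectifiable, and since $I$ is arbitrary, $\Gamma$ is locally rectifiable. (Alternatively, one can transfer to $\Phi$ as above and invoke the disk theorem.)

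For the final statement, once (1) holds, $\Phi'\in H^1(\mathbb D)$ and the classical theorem \cite[Theorem 6.8]{Pom} gives that $E\subset\mathbb T$ is null if and only if $\Phi(E)$ has zero length. Since $\psi$ is bi-Lipschitz near the interior of the bottom side $I$, the equivalence $|E|=0\iff\ell(G(E))=0$ holds for $E\subset I$. Exhausting $\mathbb R$ by countably many such $I$ then gives the equivalence for every measurable $E\subset\mathbb{R}$.
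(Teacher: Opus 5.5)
Your proposal has the same skeleton as the paper's proof. The $(2)\Rightarrow(1)$ direction bounds partition sums of $G|_I$ by $\int_I|G'(x+iy)|\,dx$ and lets $y\to0$. The $(1)\Rightarrow(2)$ direction and the null-set statement localize to a bounded rectifiable Jordan subdomain whose boundary contains a neighbourhood of $G(I)$ in $\Gamma$, apply the disk Riesz--Privalov theorem, and pull back using Schwarz reflection and the bi-Lipschitz property near $I$.

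The real difference is the auxiliary domain. The paper closes up $G(J)$ with nearest-point segments $[w_i,\xi_i]$ and the image of a hyperbolic geodesic. You take $D=G(Q')$ for a rectangle $Q'=(a',b')\times(0,1)$ with well-chosen vertical sides. Your choice has the advantage that $\partial D=G(\partial Q')$ is automatically a Jordan curve, whereas the paper's curve requires checking that its pieces do not cross. You also do not need Beurling's theorem, which concerns radii in the disk and would have to be transplanted to vertical lines in $\mathbb H$. Contrary to your remark, the unweighted estimate suffices:
\[
\int_{a-1}^{b+1}\int_0^1|G'(x+iy)|\,dy\,dx\le |Q|^{1/2}\Bigl(\iint_Q|G'|^2\,dx\,dy\Bigr)^{1/2}=|Q|^{1/2}\,\mathrm{Area}(G(Q))^{1/2}<\infty .
\]
By Fubini, almost every vertical segment in $Q$ therefore has an image of finite length.

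One step needs more than you wrote. With $I_y=\{x+iy:x\in I\}$, the change of variables gives $\int_I|G'(x+iy)|\,dx=\int_{\psi^{-1}(I_y)}|\Phi'(\zeta)|\,|d\zeta|$. The curves $\psi^{-1}(I_y)$ are not circles $|\zeta|=r$, so the $H^1(\mathbb D)$ norm does not bound this integral directly.

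As in the paper, parametrize $\psi^{-1}(I_y)$ as $r_y(\theta)e^{i\theta}$ with $|d\zeta/d\theta|\le M$. This is possible for small $y$, since these curves are $C^1$-close to an arc of $\mathbb{T}$. Then dominate $|\Phi'(r_y(\theta)e^{i\theta})|$ by the radial or non-tangential maximal function of $\Phi'$. That maximal function lies in $L^1(\mathbb{T})$ with norm at most a constant times $\|\Phi'\|_{H^1}$. With this step added, your argument is complete.
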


\begin{proof}
Assume first that $G'\in H^1_{\mathrm{loc}}(\mathbb H)$. Let $I=[a,b]\subset\mathbb{R}$ be a compact interval, and let
\[
L=\{a=x_0<x_1<\cdots<x_n=b\}
\]
be a partition of $I$.

By the Carath\'eodory theorem, $G$ extends continuously to $\overline{\mathbb H}$. Hence
\begin{align*}
   \sum_{k=1}^n |G(x_k)-G(x_{k-1})|
&=\limsup_{y\to0}\sum_{k=1}^n|G(x_k+iy)-G(x_{k-1}+iy)|\\
&\leq\limsup_{y\to0}\int_I |G'(x+iy)|\,dx.
\end{align*}
Combining this with the assumption $G'\in H^1_{\mathrm{loc}}(\mathbb H)$ yields
\[
\sum_{k=1}^n |G(x_k)-G(x_{k-1})|
\leq
\limsup_{0<y<1}\int_I |G'(x+iy)|\,dx
<\infty.
\]
Taking the supremum over all partitions of $I$, we obtain $\ell(G(I))<\infty$. 
Therefore, $\Gamma$ is locally rectifiable.

Conversely, assume that $\Gamma$ is locally rectifiable. 
Let $I=[a,b]\subset\mathbb{R}$ be a compact interval, and define
\[
\Gamma_1\coloneqq G((-\infty,a)),
\qquad
\Gamma_2\coloneqq G((b,+\infty)).
\]
By choosing points $w_1, w_2 \in \Omega$ sufficiently far from $G([a,b])$ and sufficiently close to $\Gamma_1$ and $\Gamma_2$ respectively, we can find $\xi_i\in\Gamma_i$ such that
\[
|w_i-\xi_i|=\operatorname{dist}(w_i,\Gamma)=\operatorname{dist}(w_i,\Gamma_i).
\]
Then the segments $[w_i,\xi_i)$ lie entirely in $\Omega$.

Let $z_i=G^{-1}(w_i)\in\mathbb H$ for $i=1,2$, and let $S$ denote the hyperbolic geodesic in $\mathbb H$ joining $z_1$ and $z_2$. 
Set $J=[G^{-1}(\xi_1),G^{-1}(\xi_2)]$, where, 
\[G^{-1}(\xi_1)<a<b<G^{-1}(\xi_2).\] 
It is clear that $J$ is a compact interval containing $I$. 
Furthermore, $G(S)$ is a rectifiable arc in $\Omega$ connecting $w_1$ and $w_2$. 
It follows that the closed curve $\Gamma_I$ formed by
\[
G(J),\quad [w_1,\xi_1],\quad [w_2,\xi_2],\quad \text{and }\ G(S)
\]
is a rectifiable Jordan curve. 
Denote by $\Omega_I$ the Jordan domain bounded by $\Gamma_I$.

Let $T\colon \mathbb D\to G^{-1}(\Omega_I)$ be conformal. 
Then $G\circ T$ maps $\mathbb D$ conformally onto $\Omega_I$. 
By the classical Riesz--Privalov theorem for the unit disk, we have $(G\circ T)'\in H^1(\mathbb D)$.

Since $J$ is an analytic boundary arc of $G^{-1}(\Omega_I)$, the Schwarz reflection principle (see, for example, \cite[p.~14]{Dur}) implies that $T$ extends analytically across the arc $T^{-1}(J)$. 
In particular, $T$ is bi-Lipschitz in a neighborhood of $T^{-1}(I)$. 
More precisely, there exist constants $\kappa\ge1$ and $y_0>0$ such that
\[
\kappa^{-1}\leq |T'(\zeta)|\leq \kappa,
\qquad
\zeta\in T^{-1}(I\times[0,y_0]).
\]

For sufficiently small $y_0>0$ and each $0\leq y\leq y_0$, define
\[
I_y=\{x+iy:x\in I\}.
\]
Then each curve $T^{-1}(I_y)$ admits a parametrization
\[
\zeta(\theta,y)=r_y(\theta)e^{i\theta},
\qquad
\theta\in[\alpha_y,\beta_y], \qquad \frac12\leq r_y(\theta)\le1.
\]
Write $z(\theta,y)=T(\zeta(\theta,y))=x(\theta,y)+iy$. 
Differentiating with respect to $\theta$ gives
\[
\left|\frac{d\zeta}{d\theta}(\theta,y)\right|=\frac{\left|\tfrac{dx}{d\theta} (\theta,y)\right|}{|T'(\zeta)|}\le\kappa \left|\frac{dx}{d\theta} (\theta,y)\right|.
\]
Observe that
\[
(\theta,y)\mapsto x(\theta,y)
\coloneqq
\operatorname{Re}T(\zeta(\theta,y))
\]
is a $C^1$ map on the compact set $T^{-1}(I\times[0,y_0])$. 
Hence $|\tfrac{dx}{d\theta}(\theta,y)|$ is uniformly bounded there. 
Consequently, there exists a constant $M>0$ (depending on $I$) such that $\left|\frac{d\zeta}{d\theta}\right|\leq M$.

Using the change of variables formula and the preceding estimate, we obtain
\begin{align*}
\int_{I_y}|G'(z)|\,|dz|&=\int_{T^{-1}(I_y)}|G'(T(\zeta))|\,|T'(\zeta)|\,|d\zeta|\leq M\int_{\alpha_y}^{\beta_y}|(G\circ T)'(r_y(\theta)e^{i\theta})|\,d\theta\\
&\leq M\int_0^{2\pi}(G\circ T)'_*(e^{i\theta})\,d\theta\leq M\|(G\circ T)'\|_{H^1(\mathbb D)},
\end{align*}
where $(G\circ T)'_*$ denotes the non-tangential maximal function of $(G\circ T)'$. 
The last inequality follows from \cite[Theorem 3.1]{Ga}. 
This proves $G'\in H^1_{\mathrm{loc}}(\mathbb{H})$.

Now assume that $\Gamma$ is locally rectifiable, and let $I$ and $\Omega_I$ be as above. 
If $E\subset I$ satisfies $|E|=0$, then the bi-Lipschitz property of $T$ near $T^{-1}(I)$ implies that $|T^{-1}(E)|=0$. 
Applying the Riesz--Privalov theorem in $\mathbb D$ yields
\[
\ell(G(E))=\ell\bigl((G\circ T)(T^{-1}(E))\bigr)=0.
\]

Conversely, if $\ell(G(E))=0$, then
\[
\ell\bigl((G\circ T)(T^{-1}(E))\bigr)=\ell(G(E))=0,
\]
and again by the Riesz--Privalov theorem for $\mathbb D$, it follows that $|T^{-1}(E)|=0$ and hence $|E|=0$. 
If $E$ is unbounded, decompose $E$ into a countable union of bounded null sets and apply the above argument to each of them.
\end{proof}

\begin{remark}
The localization argument used in the proof of Theorem~\ref{RP-UHP:label} also yields the existence of the non-tangential limit of $G'$ and its local $L^1$ convergence to these boundary values.
More precisely,
\begin{align*}\int_I |G'(x+iy)-G'(x)|\,dx \to 0
\qquad \text{as } y\to0_+
\end{align*}
for every compact interval $I\subset\mathbb{R}$.
\end{remark}

As an application of Theorem~\ref{RP-UHP:label}, we extend the result of Lemma~\ref{ac-bv} to the unbounded case.
\begin{corollary}\label{bv=ac:label}
   Let $G\colon \mathbb{H}\to\Omega$ be a conformal mapping onto an unbounded Jordan domain $\Omega$ with $G(\infty)=\infty$. 
   Then
\[
G|_{\mathbb{R}} \text{ is locally absolutely continuous }
\iff 
G|_{\mathbb{R}} \text{ has locally bounded variation}.
\]
\end{corollary}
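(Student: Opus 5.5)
The forward implication is immediate. If $G|_{\mathbb{R}}$ is locally absolutely continuous, then its restriction to each compact interval $[c,d]$ is absolutely continuous and hence of bounded variation. So the real content lies in the converse, and for that I plan to localize the argument of Lemma~\ref{ac-bv} using Theorem~\ref{RP-UHP:label}.

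Assume $G|_{\mathbb{R}}$ has locally bounded variation. Fix a compact interval $I=[c,d]$. The total variation of $x\mapsto G(x)$ on $I$ equals the length of the arc $G(I)$, so every such arc is rectifiable. Since $G(\infty)=\infty$, every compact subarc of $\Gamma=\partial\Omega$ has the form $G(I)$, and therefore $\Gamma$ is locally rectifiable. Theorem~\ref{RP-UHP:label} then gives $G'\in H^1_{\mathrm{loc}}(\mathbb H)$, together with the measure correspondence $|E|=0\iff \ell(G(E))=0$.

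From here I see two routes.

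\emph{Route via Banach--Zaretskii.} The first route applies Proposition~\ref{local_BZ on R:label} to the real and imaginary parts of $G|_{\mathbb{R}}$. Both are continuous and locally of bounded variation. For Lusin's property $(N)$, take a null set $A$. By Theorem~\ref{RP-UHP:label}, $\ell(G(A))=0$. The projections $\operatorname{Re}$ and $\operatorname{Im}$ are $1$-Lipschitz from $\Gamma$ to $\mathbb{R}$, so $|\operatorname{Re}G(A)|\le \ell(G(A))=0$, and likewise for the imaginary part. Hence both coordinate functions are locally absolutely continuous, and so is $G|_{\mathbb{R}}$.

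\emph{Route via boundary values.} Alternatively, by the Remark after Theorem~\ref{RP-UHP:label}, $G'$ has non-tangential boundary values $G'(x)\in L^1_{\mathrm{loc}}$, and $G'(\cdot+iy)\to G'$ in $L^1(I)$. For $x_1<x_2$ in $I$,
\[
G(x_2+iy)-G(x_1+iy)=\int_{x_1}^{x_2}G'(x+iy)\,dx.
\]
Letting $y\to0$, continuity of $G$ up to $\mathbb{R}$ (Carath\'eodory) and the $L^1$ convergence give $G(x_2)-G(x_1)=\int_{x_1}^{x_2}G'(x)\,dx$. This is the indefinite integral of an $L^1$ function, hence absolutely continuous.

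The only delicate point is the passage from locally bounded variation to local rectifiability of all of $\Gamma$, so that Theorem~\ref{RP-UHP:label} applies. It needs care only in confirming that compact subarcs of $\Gamma$ correspond exactly to images of compact intervals. That holds because $G$ is a homeomorphism of $\mathbb{R}$ onto $\Gamma\setminus\{\infty\}$. Everything else is a direct invocation of the stated results.
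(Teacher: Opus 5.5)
Your proposal is correct. Your first route is essentially the paper's proof: locally bounded variation gives local rectifiability, the measure correspondence in Theorem~\ref{RP-UHP:label} gives Lusin's property $(N)$, and Proposition~\ref{local_BZ on R:label} then yields local absolute continuity. You also add a useful bit of care the paper glosses over, namely applying the real-valued Banach--Zaretskii theorem to $\operatorname{Re}G$ and $\operatorname{Im}G$ separately via the $1$-Lipschitz projections. Your second route is also valid and is the localized analogue of the proof of Lemma~\ref{ac-bv}; just note that it rests on the Remark after Theorem~\ref{RP-UHP:label}, which the paper asserts without proof, whereas the first route uses only the proved measure correspondence.
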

\begin{proof}
If $G|_{\mathbb{R}}$ is locally absolutely continuous, then it has locally bounded variation on $\mathbb{R}$.

Conversely, assume that $G|_{\mathbb{R}}$ has locally bounded variation. 
Then the image curve $\Gamma=G(\mathbb{R})$ is locally rectifiable. 
By Theorem~\ref{RP-UHP:label}, $G|_{\mathbb{R}}$ satisfies Lusin's property $(N)$. 
Hence, the localized Banach--Zaretskii theorem (Proposition~\ref{local_BZ on R:label}) implies that $G|_{\mathbb{R}}$ is locally absolutely continuous.
\end{proof}

Recall that for a continuous function $f\colon \mathbb{R} \to \mathbb{R}$, we let $F(x) = x + i f(x)$ be the parametrization of its graph $\Gamma = F(\mathbb{R})$. 
Let $\Omega$ denote the domain lying above (or below) $\Gamma$. There exists a conformal mapping $G\colon \mathbb{H} \to \Omega$ with $G(\infty) = \infty$. 
The boundary extension of $G$ induces a homeomorphism
\[\varphi \coloneqq G^{-1} \circ F\colon \mathbb{R} \to \mathbb{R},\]
which we call the graph welding associated with $f$.

We first establish that the absolute continuity of $f$ implies that of the associated graph welding $\varphi$.
\begin{theorem}\label{ac_f:label}
   Let $f$ be a locally absolutely continuous function. 
   Then the associated graph welding $\varphi$ is locally absolutely continuous. 
\end{theorem}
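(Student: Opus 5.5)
The plan is to write $\varphi = G^{-1}\circ F$ and use Lemma~\ref{local inverse AC:label} together with Theorem~\ref{RP-UHP:label} and Corollary~\ref{bv=ac:label}.

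\emph{Local rectifiability of $\Gamma$.} Since $f$ is locally absolutely continuous, so is $F(x)=x+if(x)$. Hence for each compact interval $[c,d]$ the arc $F([c,d])$ is rectifiable, with length $\int_c^d\sqrt{1+f'(t)^2}\,dt$. Thus $\Gamma$ is locally rectifiable. Because $G(\infty)=\infty$ and $G$ is a homeomorphism of $\overline{\mathbb H}$ onto $\overline\Omega$, each compact interval $I\subset\mathbb R$ is mapped onto a compact subarc of $\Gamma$. That subarc is $F(J)$ for some compact interval $J$, so $G|_{\mathbb R}$ has locally bounded variation. By Corollary~\ref{bv=ac:label}, $G|_{\mathbb R}$ is locally absolutely continuous. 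Theorem~\ref{RP-UHP:label} also gives that for every measurable $E\subset\mathbb R$, $|E|=0$ if and only if $\ell(G(E))=0$.

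\emph{Lusin's property $(N)$ for $\varphi$.} Let $A\subset\mathbb R$ be a null set. Since $F$ is locally absolutely continuous, it satisfies Lusin's property $(N)$ with respect to arc-length, i.e.\ $\ell(F(A))=0$. To see this, cover $A\cap[c,d]$ by open intervals of small total length. Absolute continuity of $F$ then makes the total length of their images small, which in turn bounds $\ell(F(A\cap[c,d]))$. Applying the equivalence from Theorem~\ref{RP-UHP:label} to $E=G^{-1}(F(A))=\varphi(A)$, we get $|\varphi(A)|=0$. Here $E$ is measurable, since we may replace $A$ by a Borel ($G_\delta$) null set containing it, whose image under the homeomorphism is Borel.

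\emph{Conclusion.} The map $\varphi$ is an increasing homeomorphism, so it is locally of bounded variation. It also has property $(N)$. The localized Banach--Zaretskii theorem (Proposition~\ref{local_BZ on R:label}) therefore shows that $\varphi$ is locally absolutely continuous.

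Equivalently, one may apply Lemma~\ref{local inverse AC:label} to $G|_{\mathbb R}$. This requires that $G^{-1}$ have property $(N)$ with respect to arc-length, which is again the direction $\ell(G(E))=0\Rightarrow|E|=0$ of Theorem~\ref{RP-UHP:label}. One then composes the locally absolutely continuous maps $G^{-1}$ (in the arc-length sense) and $F$, using monotonicity.

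The only delicate point is the measure-theoretic equivalence $|E|=0\iff\ell(G(E))=0$, i.e.\ the Riesz--Privalov input. This is already provided by Theorem~\ref{RP-UHP:label}, so the remaining work is routine bookkeeping.
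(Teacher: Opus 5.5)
Your proposal is correct and follows essentially the paper's route. Local absolute continuity of $f$ gives local rectifiability of $\Gamma$, Theorem~\ref{RP-UHP:label} gives property $(N)$ of $G^{-1}|_{\Gamma}$ with respect to arc-length, composing with $F$ gives property $(N)$ for $\varphi$, and the localized Banach--Zaretskii theorem applied to the increasing homeomorphism $\varphi$ finishes. You are slightly more explicit than the paper on why $F$ has property $(N)$ with respect to arc-length and why $\varphi(A)$ may be taken measurable, and you correctly note that Corollary~\ref{bv=ac:label} and Lemma~\ref{local inverse AC:label} are not actually needed for the conclusion.
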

\begin{proof}
Since $f$ is locally absolutely continuous, the curve $\Gamma$ is locally rectifiable. 
This implies that the boundary parametrization $G|_{\mathbb{R}}$ of $\Gamma$ is of locally bounded variation. 
By Corollary~\ref{bv=ac:label}, it follows that $G|_{\mathbb{R}}$ is locally absolutely continuous. 
Furthermore, Theorem~\ref{RP-UHP:label} implies that the inverse mapping $G^{-1}|_{\Gamma}$ has Lusin's property $(N)$ with respect to arc-length. 
Combining this with Lemma~\ref{local inverse AC:label}, we conclude that $G^{-1}|_{\Gamma}$ is locally absolutely continuous with respect to arc-length.

Since $\varphi$ is an increasing homeomorphism, it has locally bounded variation. 
Moreover, Lusin's property $(N)$ is preserved under composition with absolutely continuous mappings, and hence $\varphi$ inherits Lusin's property $(N)$ from $G^{-1}$ and $F$. 
Therefore, by the Banach--Zaretskii theorem, $\varphi$ is locally absolutely continuous. 
\end{proof}

\begin{theorem}
   Let $f$ be a continuous function, and suppose that the associated graph welding $\varphi$ is locally absolutely continuous. 
   Then $F|_{\mathbb{R}}$ is locally absolutely continuous if and only if $G|_{\mathbb{R}}$ is locally absolutely continuous. 
\end{theorem}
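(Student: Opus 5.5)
The identity $F = G\circ\varphi$ on $\mathbb{R}$ reduces both directions to composition results. One direction, via Corollary~\ref{bv=ac:label}, uses only rectifiability. The other needs Lusin's property $(N)$ for $G$ and for $\varphi^{-1}$, together with the Banach--Zaretskii theorem.

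\emph{($\Rightarrow$)} Suppose $F|_{\mathbb{R}}$ is locally absolutely continuous. Then $f$ is locally absolutely continuous, and $\Gamma=F(\mathbb{R})$ is locally rectifiable, since the length of $F([c,d])$ equals $\int_c^d\sqrt{1+f'^2}$. Because $G|_{\mathbb{R}}$ parametrizes the same locally rectifiable curve monotonically, its variation on $[c,d]$ is the length of $G([c,d])$, which is finite. So $G|_{\mathbb{R}}$ has locally bounded variation, and Corollary~\ref{bv=ac:label} gives that $G|_{\mathbb{R}}$ is locally absolutely continuous. This direction does not use the hypothesis on $\varphi$; it is exactly the first step of the proof of Theorem~\ref{ac_f:label}.

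\emph{($\Leftarrow$)} Suppose $G|_{\mathbb{R}}$ is locally absolutely continuous and $\varphi$ is locally absolutely continuous. Then $\Gamma$ is locally rectifiable, so $F$, being a monotone parametrization of $\Gamma$, has locally bounded variation. By the complex-valued form of Proposition~\ref{local_BZ on R:label} (apply it to real and imaginary parts; here only $f$ matters), it remains to show that $F$ has Lusin's property $(N)$, i.e.\ that $f$ has property $(N)$. Let $A\subset\mathbb{R}$ be a null set. Since $\varphi$ is locally absolutely continuous, it has property $(N)$, so $|\varphi(A)|=0$. By Theorem~\ref{RP-UHP:label}, $G$ maps null sets to sets of arc-length zero, hence $\ell(F(A))=\ell(G(\varphi(A)))=0$. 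Projection onto the imaginary axis is $1$-Lipschitz, so $|f(A)|\le \ell(F(A))=0$. Thus $f$ is continuous, of locally bounded variation (its variation is bounded by the arc length), and has property $(N)$, and Proposition~\ref{local_BZ on R:label} gives that $f$, hence $F$, is locally absolutely continuous.

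The only step needing care is justifying that the variation of a monotone parametrization equals the arc length, so that bounded variation passes between $F$ and $G$. This is standard, because both are homeomorphisms onto $\Gamma$ preserving the order of points along the curve, so partitions correspond. I expect no serious obstacle: the key input is Theorem~\ref{RP-UHP:label}, which provides property $(N)$ for $G$.
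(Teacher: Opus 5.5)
Your proof is correct, but it splits the work differently from the paper. The paper's entire argument is one line: $F=G\circ\varphi$ with $\varphi$ an increasing locally absolutely continuous homeomorphism, so the claim ``follows from the definition.''

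For your ($\Leftarrow$) direction, that line is enough. An increasing $\varphi$ sends a finite family of disjoint subintervals of $[c,d]$ with small total length to disjoint subintervals of $\varphi([c,d])$ with small total length. Hence the $\varepsilon$--$\delta$ condition for $G$ on $\varphi([c,d])$ transfers directly to $G\circ\varphi$ on $[c,d]$. Your route through property $(N)$, Theorem~\ref{RP-UHP:label} and Banach--Zaretskii is valid but heavier. It also uses the conformality of $G$, which this direction does not need.

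For ($\Rightarrow$), the definition alone does not suffice, because $G=F\circ\varphi^{-1}$ and $\varphi^{-1}$ need not be absolutely continuous. For an arbitrary $G$ the implication is in fact false. Take a fat Cantor set $K$, let $\varphi(x)=\int_0^x \mathbf{1}_{\mathbb{R}\setminus K}$, and set $G=\varphi^{-1}$ and $F=\mathrm{id}$. Then $F$ and $\varphi$ are absolutely continuous, but $G$ is not, since it maps the null set $\varphi(K)$ onto $K$.

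What rescues this direction is that $G$ is a conformal boundary map. Your argument via local rectifiability and Corollary~\ref{bv=ac:label} (the first step of the proof of Theorem~\ref{ac_f:label}) is exactly what is needed there. It also shows that the hypothesis on $\varphi$ is superfluous in that direction.

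The most economical complete proof is therefore your ($\Rightarrow$) argument combined with the paper's definitional argument for ($\Leftarrow$).
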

\begin{proof} 
   Since $F=G\circ \varphi$ and $\varphi$ is an increasing locally absolutely continuous homeomorphism of $\mathbb{R}$, the proof follows from the definition of absolutely continuous functions.
\end{proof}

\section{A Sufficient Condition for Uniform Continuity}\label{sec:a_sufficient_condition_for_uniform_continuity}

We first provide a sufficient condition for the boundary extension $G|_{\mathbb{R}}\colon \mathbb{R} \to \Gamma$ of a conformal map $G\colon \mathbb H \to \Omega$ to be uniformly continuous.

\begin{lemma}\label{strip}
If the imaginary part of $\Gamma$ is bounded (or more generally, if $\Gamma$ is contained in a bounded strip domain of $\mathbb C$), then both $G|_{\mathbb{R}}$ and $G^{-1}|_{\Gamma}$ are uniformly continuous.
\end{lemma}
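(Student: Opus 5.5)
The strategy is to exploit translation invariance. Assume $\Gamma$ is a Jordan curve through $\infty$ contained in a strip $S=\{a<\operatorname{Im} w<b\}$. Let $\Omega$ be the domain above $\Gamma$; the other case is symmetric.

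\emph{Normalization.} The domain $\Omega$ contains the half-plane $\{\operatorname{Im} w>b\}$ and is contained in $\{\operatorname{Im} w>a\}$. I first want to show that $G(z)=z+c+o(1)$ as $z\to\infty$, after precomposing with an affine map. The cleanest route is Lindel\"of-type comparison: the function $\operatorname{Im} G(z)-\operatorname{Im} z$ is harmonic in $\mathbb H$ and should be bounded. Boundedness should follow from the harmonic-measure comparisons $\{\operatorname{Im} w>b\}\subset\Omega\subset\{\operatorname{Im} w>a\}$ applied to $\operatorname{Im} G^{-1}$, after fixing the scaling so that $G$ behaves like $z$ at $\infty$. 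I expect this normalization to be the main technical point. An alternative is to work with the strip-periodic exhaustion via the Carath\'eodory kernel theorem.

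\emph{Compactness.} Given this, consider the translates $G_t(z)=G(z+t)-G(t)$, or better, re-normalize each translate to the conformal map onto $\Omega-\operatorname{Re} G(t)$. The family of domains $\Omega-s$ with $s\in\mathbb R$ all lie between two fixed half-planes. The associated maps, normalized at $\infty$ and by hyperbolic distance, form a normal family. Their limits are again conformal maps onto domains of the same type, namely domains above a Jordan curve in $\overline S$. Here one must check that the limit boundary is still a Jordan curve; this may fail in general, so I would instead prove uniform continuity directly, as follows.

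\emph{Direct estimate (preferred route).} Fix $\varepsilon>0$. By Carath\'eodory's prime-end or crosscut argument (the Wolff lemma), for $x\in\mathbb R$ and small $r$, the semicircle $C_r=\{|z-x|=r\}\cap\mathbb H$ satisfies
\[
\inf_{\rho\in[r,\sqrt r]}\ell(G(C_\rho))\le \frac{C}{\sqrt{\log(1/r)}}\,\bigl(\text{area of } G(\{|z-x|<\sqrt r\})\bigr)^{1/2}.
\]
Uniformity in $x$ requires a uniform area bound for $G(\{|z-x|<1\}\cap\mathbb H)$. This follows because $|G'|$ is uniformly bounded on $\{\operatorname{Im} z\ge 1\}$ by the normalization step, which gives $G(z)\approx z$ far from $\mathbb R$. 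It also uses that $G$ maps the unit half-disk into a region within the strip of bounded diameter, again by the normalization and the Koebe distortion theorem. The short crosscut $G(C_\rho)$ then cuts off $G([x-r,x+r])$. Since $\Gamma$ is a Jordan arc whose two ends go to $\infty$ in opposite horizontal directions, this piece must be the short subarc between the crosscut endpoints. Its diameter is controlled by a uniform modulus, obtained from the compactness of a fundamental piece of $\Gamma$.

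\emph{Inverse map.} For $G^{-1}|_\Gamma$, I would apply the same argument to $G^{-1}$ using the symmetric crosscut lemma in $\Omega$: small crosscuts of $\Omega$ with endpoints $w_1,w_2\in\Gamma$ close together map to crosscuts of $\mathbb H$ of small diameter. This uses uniform boundedness of the hyperbolic geometry, again coming from $\Omega$ lying between two half-planes.

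The main obstacle is the uniformity in $x$, which rests entirely on the normalization $G(z)\sim z$ from the first step.
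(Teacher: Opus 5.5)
\paragraph{The gap: uniform local connectivity of $\Gamma$.} Your argument breaks at the end of the direct estimate. After producing a short crosscut $G(C_\rho)$, you assert that the cut-off arc $G([x-\rho,x+\rho])$ has small diameter ``by a uniform modulus obtained from the compactness of a fundamental piece of $\Gamma$''. There is no fundamental piece: nothing is periodic, and the strip gives compactness only in the vertical direction.

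What this step actually needs is uniform local connectivity of $\Gamma$: close points of $\Gamma$ must bound a bounded subarc of small diameter. For a graph this means $f$ is uniformly continuous. Your inverse-map step silently uses the same property, since close points of $\Gamma$ need not be separated from $\infty$ by any short crosscut of $\Omega$. Your own worry in the compactness paragraph, that limits of translates may have non-Jordan boundaries, is exactly this phenomenon, and the direct estimate does not escape it.

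This is not just a gap in the write-up: the strip hypothesis alone does not supply this property, and then both conclusions can fail. Let $f\ge 0$ vanish except for tents of height $m$ and half-width $\epsilon_k\to0$ centred at widely spaced points $t_k$. Let $\Omega$ be the domain above, and normalize $G$ to have angular derivative $1$ at $\infty$, so that $\operatorname{Im}G(z)\le\operatorname{Im}z+m$. The base points $w_\pm=t_k\pm\epsilon_k$ satisfy $|w_+-w_-|=2\epsilon_k$. However, the parallel $45^\circ$ segments from one side of the tent up to height $1+m$ form a family of modulus at least $m/(2(1+m))$ in $\Omega$. Their preimages join $[G^{-1}(w_-),G^{-1}(w_+)]$ to $\{\operatorname{Im}z>1\}$, a family of modulus at most $\pi/\log(2/\delta_k)$, where $\delta_k=G^{-1}(w_+)-G^{-1}(w_-)$. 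Hence $\delta_k$ stays bounded below and $G^{-1}|_\Gamma$ is not uniformly continuous. Thin downward fjords of depth $m$ defeat $G|_{\mathbb R}$ by the same extremal-length comparison.

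So an extra hypothesis, such as uniform continuity of $f$ or $\Gamma$ being a quasicircle (true in all of the paper's applications), is indispensable. The paper's proof does not derive such regularity; it has to come from the cited criteria \cite[Lemmas 3.2 and 4.2]{MT1}.

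\paragraph{Comparison with the paper, and repairs to your preliminary steps.} The paper never argues at the boundary. After moving $\Omega$ into $\mathbb H$, it uses the Nevanlinna representation to get $\operatorname{Im}G(z)\ge c\operatorname{Im}z$, so $d(G(x+iy_0),\Gamma)$ is bounded below for large $y_0$. The same reasoning applied to $G^{-1}$ on $\{\operatorname{Im}w>m\}$ gives $\operatorname{Im}G(x+i/c)\le m+1$, hence $d(G(x+i/c),\Gamma)\le m+1$. These interior bounds are then fed into \cite{MT1}.

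Your preparatory steps also need repair.
\begin{enumerate}
\item The normalization $G(z)=z+c+o(1)$ is false in general. For a graph equal to $0$ far to the left and to $m$ far to the right, one has $G(z)=cz-\frac{m}{\pi}\log z+O(1)$. What is true, and is all you actually use, is $0\le\operatorname{Im}G(z)-c\operatorname{Im}z\le m$, since this is a Poisson integral of boundary values in $[0,m]$. Combined with Koebe it gives $|G'(z)|\le 4(c+m/\operatorname{Im}z)$.
\item That bound degenerates at $\mathbb R$, so Koebe cannot give your uniform area bound for $G(\{|z-x|<1\}\cap\mathbb H)$. The bound does follow from extremal length. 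The vertical segments of $\Omega$ up to height $M$, taken over the horizontal projection of this image, have modulus at least (projection length)$/M$. Their preimages meet the half-disk and reach height $(M-m)/c$, which caps that modulus.
\end{enumerate}
With these repairs and a uniform local connectivity hypothesis added, your length--area/crosscut scheme would give a self-contained proof that does not depend on \cite{MT1}.
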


\begin{proof}
By composing with a translation and a rotation, we may assume that $G$ is a conformal mapping of $\mathbb H$ into $\mathbb H$ such that $G(\infty) =\infty$.
By the Julia--Carath\'eodory theorem (see \cite[Section 4.5]{Sha})
or the Nevanlinna representation for Pick functions (see \cite[Section 5.3]{RR}), the angular derivative of $G$ at $\infty$, defined by
\[
c=\lim_{y \to \infty}\frac{\operatorname{Im}\ G(x+iy)}{y},
\]
exists for any $x \in \mathbb{R}$, and satisfies $0<c<\infty$. 
For any $y_0>0$, define the half-plane
\[
{\mathbb H}(y_0)=\{z=x+iy \in \mathbb H \mid y>y_0\}.
\]
Then, we have $G({\mathbb H}(y_0)) \subset {\mathbb H}(cy_0)$.
Indeed, by applying the Nevanlinna representation, $G$ can be written as
\[
G(z)=\operatorname{Re}\,G(i)+cz+\int_{-\infty}^\infty \left(\frac{1}{\xi-z}-\frac{\xi}{1+\xi^2}\right)d\mu(\xi)
\]
for some Borel measure $\mu$ on $\mathbb{R}$ satisfying $\int_{\mathbb{R}}(1+\xi^2)^{-1}d\mu(\xi)<\infty$.
This directly implies that $\operatorname{Im}\,G(z) \geq c\, \operatorname{Im}\,z$.

To prove the uniform continuity of $G^{-1}|_{\Gamma}$, we choose $y_0>0$ large enough so that $cy_0 >\sup_{w \in \Gamma} \operatorname{Im}\, w$. 
Since $G({\mathbb H}(y_0)) \subset {\mathbb H}(cy_0)$, this implies that $d(G(x+iy_0),\Gamma)$ is uniformly bounded away from $0$. 
Hence $G^{-1}|_{\Gamma}$ is uniformly continuous \cite[Lemma 4.2]{MT1}.

For the uniform continuity of $G|_{\mathbb{R}}$, we restrict $G^{-1}$ to ${\mathbb H}(m)$, where $m=\sup_{w \in \Gamma} \operatorname{Im}\, w$, and regard $G^*(w)=G^{-1}(w+m)$ as a conformal mapping of $\mathbb H$ into itself. 
Then, the angular derivative of $G^*$ at $\infty$ is $1/c$, and we have
\[
G^{-1}({\mathbb H}(m+1)) =G^*({\mathbb H}(1)) \subset {\mathbb H}(1/c).
\]
This yields
\[
0<\operatorname{Im}\,G(x+i/c) \leq m+1,
\]
which shows that $d(G(x+i/c),\Gamma) \leq m+1$ for all $x \in \mathbb{R}$. 
By \cite[Lemma 3.2]{MT1}, $G|_{\mathbb{R}}$ is uniformly continuous.
\end{proof}

\begin{theorem}
Suppose that the imaginary part of $\Gamma$ is bounded. 
Then $\Gamma$ is an asymptotically conformal quasicircle if and only if
$\log G' \in B_0(\mathbb H)$.
\end{theorem}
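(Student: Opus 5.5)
The plan is to reduce the statement to the relative (real-line) version of Pommerenke's theorem established in \cite[Theorem 1.1]{MT1}, using Lemma~\ref{strip} to verify that theorem's hypotheses. As I recall it, that result says the following. Let $G\colon\mathbb H\to\Omega$ be conformal with $G(\infty)=\infty$, and suppose $G|_{\mathbb R}$ and $G^{-1}|_{\Gamma}$ are uniformly continuous. Then $\Gamma$ is an asymptotically conformal quasicircle if and only if $\log G'\in B_0(\mathbb H)$. In the unbounded setting, uniform continuity is exactly what lets "small $|a-b|$ on $\Gamma$" and "small scales in $\mathbb H$" be compared. Since $\operatorname{Im}\Gamma$ is bounded, $\Gamma$ lies in a horizontal strip, and Lemma~\ref{strip} supplies both uniform continuity statements. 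Applying \cite[Theorem 1.1]{MT1} then gives the equivalence directly.

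If that theorem is not available in precisely this form, I will argue the two directions by hand, following Pommerenke's proof \cite{Po78}.

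\textbf{($\Rightarrow$)} An asymptotically conformal quasicircle is the image of $\mathbb R$ under a quasiconformal map $\widetilde G$ of $\mathbb C$ extending $G$ to the lower half-plane. Its complex dilatation $\mu$ satisfies $\operatorname{ess\,sup}_{|\operatorname{Im} z|<t}|\mu|\to0$ as $t\to0$; this comes from asymptotic symmetry via an Ahlfors-type reflection. The standard estimate $(\operatorname{Im} z)|(\log G')'(z)|\lesssim \|\mu\|_{L^\infty(\text{neighbourhood})}$ plus a remainder then shows that $\log G'$ is a little Bloch function.

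The real issue in this direction is locality. I need "small Euclidean scale on $\Gamma$" to match "small $\operatorname{Im} z$ near $\mathbb R$" uniformly in $\operatorname{Re} z$. This is where the uniform continuity of $G|_{\mathbb R}$ and $G^{-1}|_{\Gamma}$ from Lemma~\ref{strip} enters: it converts the uniform limit $|a-b|\to0$ into the uniform limit $\operatorname{Im} z\to 0$, and back.

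\textbf{($\Leftarrow$)} From $\log G'\in B_0(\mathbb H)$, I first want $\Gamma$ to be a quasicircle. I will use the Becker-type criterion: if $(\operatorname{Im} z)|(\log G')'|$ is small in a strip $0<\operatorname{Im} z<t_0$, then $G$ admits a quasiconformal extension near $\mathbb R$ with small dilatation. Globally, the part of $\Gamma$ coming from $\operatorname{Im} z\ge t_0$ is controlled by the strip bound on $\Gamma$ and by $G(\mathbb H(y_0))\subset\mathbb H(cy_0)$ from the proof of Lemma~\ref{strip}. The small dilatation near $\mathbb R$ then yields asymptotic conformality at small scales, again converted to Euclidean scales on $\Gamma$ via Lemma~\ref{strip}.

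I expect the main obstacle to be this global quasicircle step in the reverse direction. A little Bloch condition controls only small scales, so I must show separately that $\Gamma$ is a quasicircle at large scales. My first attempt will use the bounded strip. For large $|a-b|$, the ratio $\max_{w\in\wideparen{ab}}(|a-w|+|w-b|)/|a-b|$ is close to $1$, because $\Gamma$ is a graph-like curve of bounded height. The intermediate scales can be handled by compactness of the family of translates, using uniform continuity. This is the point where I would lean on \cite{MT1} rather than redo the argument.
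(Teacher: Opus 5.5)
Your primary route is the paper's own proof: it is a one-line reduction to the Matsuzaki--Tao real-line version of Pommerenke's theorem, cited as \cite[Theorems 3.1 and 4.1]{MT1} (the two directions stated separately rather than as a single Theorem~1.1), with Lemma~\ref{strip} supplying the uniform continuity of $G|_{\mathbb R}$ and $G^{-1}|_{\Gamma}$ that those results need. The by-hand fallback you sketch is therefore unnecessary.
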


\begin{proof}
This is a direct consequence of \cite[Theorems 3.1 and 4.1]{MT1} and Lemma~\ref{strip}.
\end{proof}

\section{Regularity of Graph Weldings}\label{sec:regularity_of_graph_weldings}
Throughout this section, let $\Gamma$ be the graph of a continuous function $f$, and define the parametrization $F\colon \mathbb{R}\to \Gamma$ by $F(x)=x+if(x)$. 
Let $G\colon \mathbb{H}\to \Omega$ denote a conformal mapping from the upper half-plane $\mathbb{H}$ onto the Jordan domain $\Omega$ lying above (or below) $\Gamma$ with $G(\infty)=\infty$.

In order to prove the main theorems of this section, we require several lemmas. The first lemma is crucial in establishing a sufficient condition for $F$ to be asymptotically symmetric.
\begin{lemma}\label{SLZ-F-as:label}
Assume that $f\colon\mathbb R\to\mathbb R$ belongs to the strongly little Zygmund class $\lambda_*^S(\mathbb R)$. Let $F$ and $\Gamma$ be as above. 
Then $F$ is an asymptotically symmetric homeomorphism of $\mathbb{R}$ onto $\Gamma$.
\end{lemma}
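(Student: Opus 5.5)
Since $\lambda_*^S(\mathbb R)\subset\Lambda_*(\mathbb R)$, the result of Kovalev and Onninen \cite{KO} quoted in the introduction shows that $F$ is a quasisymmetric embedding with some control function $\eta$. So $F$ is quasisymmetric, and what remains is the asymptotic estimate. Fix $\varepsilon>0$ and $t>0$, and take distinct $x,a,b$ in a ball of radius $\delta$ with $|x-a|\le t|x-b|$. We must show
\[
|F(x)-F(a)|\le(1+\varepsilon)\,t\,|F(x)-F(b)|
\]
once $\delta$ is small. The tool is the slope function
\[
s(u,v)=\frac{f(v)-f(u)}{v-u},\qquad |F(u)-F(v)|=|u-v|\sqrt{1+s(u,v)^2}.
\]
Dividing gives
\[
\frac{|F(x)-F(a)|}{|F(x)-F(b)|}=\frac{|x-a|}{|x-b|}\sqrt{\frac{1+s(x,a)^2}{1+s(x,b)^2}}.
\]
So it suffices to show that $|s(x,a)-s(x,b)|$ is uniformly small, in a sense made precise below.

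\textbf{Key step: slope comparison.} The defining inequality of $\lambda_*^S$, with $\lambda,\mu>0$ and $\lambda+\mu=2$, says exactly the following. For a point $x$ and two points $x-\lambda h$, $x+\mu h$ on opposite sides of $x$ with total length $2|h|\le 2\delta$, we have
\[
\lambda\mu\,|s(x,x+\mu h)-s(x-\lambda h,x)|\le\varepsilon|h|.
\]
Dividing by $\lambda\mu|h|$ gives $|s(x,x+\mu h)-s(x-\lambda h,x)|\le \varepsilon/(\lambda\mu)$. This bound is good only when $\lambda,\mu$ are comparable to $1$, i.e.\ when the two sides have comparable lengths. Hence I would split into cases according to the ratio $r=|x-a|/|x-b|\in(0,t]$.

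\textbf{Case analysis.} \emph{(i) Comparable ratio.} Suppose $\kappa\le r\le 1/\kappa$ for a small $\kappa=\kappa(\varepsilon,t)$.
\begin{itemize}
\item If $a$ and $b$ lie on opposite sides of $x$, the estimate above applies directly with $\lambda\mu\ge c(\kappa)>0$. This gives $|s(x,a)-s(x,b)|\le C(\kappa)\varepsilon'$ whenever $\delta\le\delta(\varepsilon')$.
\item If $a$ and $b$ lie on the same side, insert the reflected point $x'=2x-b$. Then $s(x,a)$ and $s(x,b)$ are each within $C(\kappa)\varepsilon'$ of $s(x',x)$.
\end{itemize}
\emph{(ii) Extreme ratio.} Suppose $r<\kappa$. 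Here the slopes need not be close, since $f$ is only Zygmund. I would instead use the quasisymmetry of $F$ together with the Zygmund bound $|s(u,v)-s(v,w)|\le C\|f\|_{\Lambda_*}$ for adjacent comparable intervals, iterated dyadically. This gives a power decay: $|F(x)-F(a)|/|F(x)-F(b)|\le \eta(r)$, and I will try to upgrade this to $\le C r^{\alpha}$. For $r$ small this is $\le(1+\varepsilon)t$ automatically, since $t\ge r$ and $r^{\alpha}\ll$ any fixed multiple of $t$ once $\kappa$ is small relative to $t$. The case $r>1/\kappa$ cannot occur once $\kappa<1/t$.

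\textbf{Conclusion and main obstacle.} In the comparable case, the slope difference is small but the slopes themselves may be large. A clean bound on $\sqrt{(1+s_1^2)/(1+s_2^2)}$ needs $s_1,s_2$ bounded, and for Zygmund functions slopes on small intervals can grow like $\log(1/h)$. I would handle this by using the ratio form of the estimate rather than an absolute difference. The slope-comparison inequality can be rewritten with $|F(x)-F(b)|$ in the denominator: multiplying the $\lambda_*^S$ condition by $1/|F(x)-F(b)|\le 1/|x-b|$ shows that the relevant error term is at most $\varepsilon'/\lambda\mu$ times $|x-b|/|F(x)-F(b)|\le 1$. This makes the estimate independent of slope size. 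Getting this normalization right, together with the power-decay claim in the extreme-ratio case, is where I expect the real work to lie.
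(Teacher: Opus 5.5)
Your proof is correct, and its core estimate is the paper's. The $\lambda_*^S$ condition says exactly that $\lambda\mu|h|\,|s(x,x+\mu h)-s(x-\lambda h,x)|\le\varepsilon|h|$. Your display drops the factor $|h|$ on the left, but your conclusion $|s(x,x+\mu h)-s(x-\lambda h,x)|\le\varepsilon/(\lambda\mu)$ is right. The length ratio must then be bounded independently of the size of the slopes; the paper does this by writing the squared ratio as $g_\eta(d)$, with $d$ the normalized slope, and maximizing over $d\ge0$.

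Where you go beyond the paper is in the bookkeeping. The paper only treats triples with $a$ and $b$ on opposite sides of $x$ at a fixed ratio $\lambda/\mu$. Its choice $\eta=\lambda\mu\varepsilon(\varepsilon+2)/(1+\varepsilon)$ makes $\delta(\eta)$ degenerate as $\lambda\mu\to0$. Two of your ingredients supply what is needed to get one $\delta$ valid for all triples with $|x-a|/|x-b|\le t$:
\begin{itemize}
\item the reflected point $2x-b$ handles same-side triples;
\item the quasisymmetry of $F$ (Kovalev--Onninen) handles small ratios, and it also supplies the quasisymmetry built into the paper's definition of asymptotic symmetry.
\end{itemize}

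The two steps you single out as ``the real work'' are in fact immediate. First, since $u\mapsto\sqrt{1+u^2}$ is $1$-Lipschitz and $\sqrt{1+s_2^2}\ge1$,
\[
\sqrt{\frac{1+s_1^2}{1+s_2^2}}\le 1+\frac{|s_1-s_2|}{\sqrt{1+s_2^2}}\le 1+|s_1-s_2|,
\]
so no bound on the slopes is needed. This is all your ``ratio form'' amounts to, and it is a cruder but sufficient substitute for the paper's optimization in $d$.

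Second, no power decay is needed in the extreme case. Let $\eta_F$ be the control function of $F$ and fix $\kappa\in(0,t)$ with $\eta_F(\kappa)\le t$. For $r<\kappa$ the ratio is at most $\eta_F(r)\le\eta_F(\kappa)\le t$, with no condition on $\delta$. For $\kappa\le r\le t$ you have $\lambda\mu\ge\min\{4\kappa/(1+\kappa)^2,\,4t/(1+t)^2\}>0$. Your slope estimate (applied directly, or via $2x-b$) then gives a ratio at most $t\bigl(1+C(\kappa,t)\varepsilon'\bigr)$ once $\delta$ is small.
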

\begin{proof}
Fix $x\in\mathbb{R}$ and $h\neq 0$. Let $\lambda,\mu>0$ with $\lambda+\mu=2$. Set
\[
\Delta_+^\lambda = f(x+\lambda h)-f(x), 
\quad 
\Delta_-^\mu = f(x)-f(x-\mu h).
\]
The strongly little Zygmund condition implies that for every $\eta>0$ there exists $\delta(\eta)>0$ such that for $|h|\le\delta(\eta)$,
\[
|\, \mu\Delta_+^\lambda - \lambda\Delta_-^\mu \,| \leq \eta |h|.
\]
In particular,
\begin{equation}\label{eq:basic_as}
|\mu\Delta_+^\lambda| \leq |\lambda\Delta_-^\mu| + \eta|h|.
\end{equation}

Observe that
\begin{align*}
   \mu^2|F(x+\lambda h)-F(x)|^2&= \lambda^2\mu^2 h^2 + \mu^2(\Delta_+^\lambda)^2,\\
\lambda^2|F(x)-F(x-\mu h)|^2&= \lambda^2\mu^2 h^2 + \lambda^2(\Delta_-^\mu)^2.
\end{align*}
Hence
\[
\left|\frac{\mu\bigl(F(x+\lambda h)-F(x)\bigr)}{\lambda\bigl(F(x)-F(x-\mu h)\bigr)}\right|^2=\frac{\lambda^2\mu^2 h^2 + \mu^2(\Delta_+^\lambda)^2}{\lambda^2\mu^2 h^2 + \lambda^2(\Delta_-^\mu)^2}.
\]
Using \eqref{eq:basic_as}, we estimate
\[
\mu^2(\Delta_+^\lambda)^2 \leq (|\lambda\Delta_-^\mu|+\eta|h|)^2= \lambda^2(\Delta_-^\mu)^2 + 2\lambda\eta|\Delta_-^\mu||h| + \eta^2 h^2.
\]
Therefore,
\begin{align}\label{eq:ratio}
   \left|\frac{\mu\bigl(F(x+\lambda h)-F(x)\bigr)}{\lambda\bigl(F(x)-F(x-\mu h)\bigr)}\right|^2
   &\leq\frac{\lambda^2\mu^2 h^2 + \lambda^2(\Delta_-^\mu)^2 + 2\lambda\eta |\Delta_-^\mu||h| + \eta^2 h^2}{\lambda^2\mu^2 h^2 + \lambda^2(\Delta_-^\mu)^2} \notag \\
   &=1+\frac{2\lambda\eta |\Delta_-^\mu||h|+\eta^2 h^2}{\lambda^2\bigl(\mu^2h^2+|\Delta_-^\mu|^2\bigr)}.
\end{align}

Set $d=|\Delta_-^\mu|/(\mu|h|)\geq 0$.
Dividing the numerator and denominator in \eqref{eq:ratio} by $\lambda^2\mu^2h^2$, it follows that
\[
\left|\frac{\mu\bigl(F(x+\lambda h)-F(x)\bigr)}{\lambda\bigl(F(x)-F(x-\mu h)\bigr)}\right|^2\leq 1+\frac{2(\eta/(\lambda \mu))\, d + (\eta/(\lambda \mu))^2}{1+d^2} \eqqcolon  g_\eta(d).
\]
For fixed $\lambda,\mu>0$ with $\lambda+\mu=2$, the function $g_\eta$ is bounded on $[0,\infty)$ and satisfies
\[
\lim_{d\to\infty} g_\eta(d)=1,\quad g_\eta(0)=1+\frac{\eta^2}{\lambda^2\mu^2}.
\]
Hence
\[
\sup_{d\ge0} g_\eta(d)=\left(\frac{\eta+\sqrt{\eta^2+4\lambda^2\mu^2}}{2\lambda \mu}\right)^2.
\]

Then for every $\varepsilon>0$, set
\[
\eta =\frac{\lambda\mu\varepsilon(\varepsilon+2)}{1+\varepsilon}
\]
so that for $|h|\leq\delta(\eta)$, it follows that
\[
\left|\frac{\mu\bigl(F(x+\lambda h)-F(x)\bigr)}{\lambda\bigl(F(x)-F(x-\mu h)\bigr)}\right|\leq \frac{\eta+\sqrt{\eta^2+4\lambda^2\mu^2}}{2\lambda\mu}= 1+\varepsilon,
\]
uniformly for $x\in\mathbb{R}$ with $\lambda+\mu=2$. 

Therefore, for all $|h|\leq \delta=\delta(\eta)$, we have
\[
\left|\frac{F(x+\lambda h)-F(x)}{F(x)-F(x-\mu h)}\right|\leq (1+\varepsilon)\frac{\lambda}{\mu}
\]
uniformly in $x$.
This is precisely the asymptotic symmetry condition. 
Hence $F$ is an asymptotically symmetric homeomorphism of $\mathbb{R}$ onto $\Gamma$.
\end{proof}

The next lemma demonstrates that the inverse of an asymptotically symmetric homeomorphism inherits the same property, provided it is uniformly continuous. 

We denote the \emph{modulus of continuity} of $f$ by
\[
\omega(t)=\omega(t;f)=\sup\{|f(x)-f(y)|:|x-y|\leq t\}.
\]

\begin{lemma}\label{Finverse_AS:label}
Let $F\colon \mathbb{R} \to \Gamma\subset \mathbb{C}$ be an asymptotically symmetric homeomorphism.
If $F^{-1}$ is uniformly continuous, then $F^{-1}$ is asymptotically symmetric.
\end{lemma}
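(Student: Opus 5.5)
The plan is to argue by contradiction, using the asymptotic symmetry of $F$ in the reverse direction together with the quasisymmetry of $F$. First, $F^{-1}$ is quasisymmetric, because the inverse of a quasisymmetric embedding is quasisymmetric, as noted after \eqref{eq:qs}. So only the asymptotic condition needs proof.

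Fix $\varepsilon>0$ and $t>0$. Let $p,q,r\in\Gamma$ be distinct points in a ball of small radius $\delta'$ with $|p-q|/|p-r|\le t$. Write $x=F^{-1}(p)$, $a=F^{-1}(q)$, $b=F^{-1}(r)$ and $s=|x-a|/|x-b|$. We must show $s\le(1+\varepsilon)t$. Suppose instead that $s>(1+\varepsilon)t$. By the uniform continuity of $F^{-1}$, once $\delta'$ is small the points $x,a,b$ lie in a ball of any prescribed radius $\delta$. This is the only place the hypothesis on $F^{-1}$ enters: it converts smallness on $\Gamma$ into smallness on $\mathbb R$.

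\emph{Large ratios.} Let $\eta$ be the distortion function of $F$ from \eqref{eq:qs}. Choose $S$ so large that $\eta(1/S)<1/t$. If $s\ge S$, then $|x-b|/|x-a|\le 1/S$, so
\[
\frac{|p-r|}{|p-q|}\le \eta(1/S)<\frac1t,
\]
which contradicts $|p-q|/|p-r|\le t$.

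\emph{Moderate ratios.} The main obstacle is that the definition of asymptotic symmetry gives a $\delta$ depending on the threshold $t'$. Applying it with the fixed value $t'=1/((1+\varepsilon)t)$ only yields the bound $(1+\varepsilon')t'$, not $(1+\varepsilon')/s$, which is too weak. To fix this, I discretize the compact range $s\in[(1+\varepsilon)t,S]$.
\begin{enumerate}[(i)]
\item Choose finitely many thresholds $t_1<\dots<t_N$ covering $[1/S,\,1/((1+\varepsilon)t)]$, with consecutive ratios at most $(1+\varepsilon)^{1/2}$.
\item Choose $\varepsilon'>0$ with $(1+\varepsilon')(1+\varepsilon)^{-1/2}\le 1$.
\item Take $\delta$ to be the minimum of the radii supplied by the asymptotic symmetry of $F$ for the pairs $(\varepsilon',t_j)$, $j=1,\dots,N$.
\end{enumerate}
Given $s$ in the moderate range, pick $t_j$ with $1/s\le t_j\le(1+\varepsilon)^{1/2}/s$. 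Asymptotic symmetry of $F$ applied to $x,b,a$ then gives
\[
\frac{|p-r|}{|p-q|}\le(1+\varepsilon')t_j\le\frac{(1+\varepsilon')(1+\varepsilon)^{1/2}}{s}<\frac{(1+\varepsilon')(1+\varepsilon)^{1/2}}{(1+\varepsilon)t}\le\frac1t .
\]
This again contradicts $|p-q|/|p-r|\le t$.

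\emph{Conclusion.} Both cases are impossible, so $s\le(1+\varepsilon)t$. It remains to choose $\delta'$ via the uniform continuity of $F^{-1}$ so that the preimages of any ball of radius $\delta'$ lie in a ball of radius $\delta$. This is exactly the asymptotic symmetry of $F^{-1}$.
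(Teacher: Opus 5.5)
Your proof is correct, and its core is the same as the paper's. You apply the asymptotic symmetry of $F$ with $a$ and $b$ interchanged, argue contrapositively, and use the uniform continuity of $F^{-1}$ (e.g.\ $\omega(2\delta')<\delta$) to pull balls on $\Gamma$ back to balls in $\mathbb{R}$.

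The difference is that the ``main obstacle'' you identify does not exist. Your large-ratio case and the discretization into thresholds $t_1,\dots,t_N$ can therefore be deleted. To contradict $|p-q|/|p-r|\le t$ you never need a bound of the form $(1+\varepsilon')/s$; you only need $|p-r|/|p-q|<1/t$. Set $t_0=1/((1+\varepsilon)t)$. If $s>(1+\varepsilon)t$, then $|x-b|/|x-a|=1/s<t_0$. One application of the asymptotic symmetry of $F$ with the pair $(\varepsilon',t_0)$, where $\varepsilon'<\varepsilon$, gives
\[
\frac{|p-r|}{|p-q|}\le(1+\varepsilon')t_0=\frac{1+\varepsilon'}{(1+\varepsilon)t}<\frac1t .
\]
This holds for every $s>(1+\varepsilon)t$ at once. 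It is harmless that the definition only returns $(1+\varepsilon')t_0$ rather than a multiple of the actual ratio $1/s$, because $t_0$ already carries the margin $1+\varepsilon$.

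This single-threshold argument is exactly the paper's proof; in your notation the paper takes the threshold $1/((1+\varepsilon)t)$ and $\varepsilon'=\varepsilon/2$. It makes no use of the distortion function $\eta$. The one thing your write-up adds is the explicit remark that $F^{-1}$ is quasisymmetric. That remark is worth keeping, since the paper builds quasisymmetry into its definition of asymptotic symmetry while its proof leaves this point implicit.
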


\begin{proof}
Given $\varepsilon'>0$ and $t'>0$, choose $\varepsilon=\varepsilon'/2$ and set
\[
t = \frac{1}{(1+\varepsilon')t'}.
\]
Since $F$ is asymptotically symmetric, there exists $\delta>0$ such that for all $a,b,x$ in a ball of radius $\delta$,
\[
\left|\frac{b-x}{a-x}\right|\leq t\implies
\left|\frac{b'-x'}{a'-x'}\right|\leq (1+\varepsilon)t,
\]
where \[a'=F(a),\qquad b'=F(b),\qquad  x'=F(x).\]

Let $\omega$ be a modulus of continuity of $F^{-1}$, and choose $\delta'>0$ such that $\omega(2\delta')< \delta$.  
If $a',b',x'\in\Gamma$ lie in a ball of radius $\delta'$, then their preimages lie in a ball of radius $\delta$.  
Taking the contrapositive of the above implication gives
\[
\left|\frac{a'-x'}{b'-x'}\right|\leq t'<\frac{1}{(1+\varepsilon)t}
\implies
\left|\frac{a-x}{b-x}\right|\leq (1+\varepsilon')t'.
\]
Hence, $F^{-1}$ is asymptotically symmetric.
\end{proof}

Our first theorem is a basic result, which translates a well-known property of a Zygmund function into the graph welding setting.

\begin{theorem}\label{qs:label}
Let $f$ be a Zygmund function on $\mathbb{R}$. 
Then the associated graph welding $\varphi$ belongs to $\mathrm{QS}(\mathbb{R})$.
\end{theorem}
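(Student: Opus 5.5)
The plan is to write $\varphi = G^{-1}\circ F$ and show that each factor is a quasisymmetric embedding. Composition then preserves quasisymmetry, by the remark following \eqref{eq:qs}, and for an increasing homeomorphism of $\mathbb{R}$ the metric quasisymmetry condition \eqref{eq:qs} implies the Beurling--Ahlfors condition defining $\mathrm{QS}(\mathbb{R})$ (take $a=x+t$, $b=x-t$, so the ratio is $1$).

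\emph{First factor: $F$.} We show that $F\colon\mathbb{R}\to\Gamma$ is quasisymmetric. This is the theorem of Kovalev and Onninen \cite{KO}, namely that graphs of Zygmund functions are quasisymmetric, which we quote. If a direct argument is preferred, it runs as follows.
\begin{enumerate}[(i)]
\item Imitate the computation in Lemma~\ref{SLZ-F-as:label}, with the little-$o$ bound replaced by the big-$O$ bound. The Zygmund condition, iterated dyadically, gives
\[
\left|\lambda\bigl(f(x+\mu h)-f(x)\bigr)-\mu\bigl(f(x)-f(x-\lambda h)\bigr)\right|\le C\|f\|_{\Lambda_*}|h|\log\bigl(2+\max(\lambda/\mu,\mu/\lambda)\bigr).
\]
\item Feed this into the same ratio estimate $g_\eta(d)$. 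It yields
\[
\frac{|F(x)-F(a)|}{|F(x)-F(b)|}\le \eta\!\left(\frac{|x-a|}{|x-b|}\right)
\]
with a finite $\eta$ that, by the same computation as in Lemma~\ref{SLZ-F-as:label}, should tend to $0$ as its argument tends to $0$.
\end{enumerate}
The case where $a$ and $b$ lie on the same side of $x$ reduces to this one by the usual three-point argument, since $F$ is already weakly quasisymmetric. This is the \emph{main obstacle} of the direct route, because Zygmund functions need not be Lipschitz, so the vertical increments can dominate. I will therefore rely on \cite{KO}.

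\emph{Second factor: $G^{-1}$.} Since $F$ is quasisymmetric, $\Gamma$ is a quasicircle through $\infty$ (see \cite{Ah}), and $\Omega$ is a quasidisk. Then $G$ extends to a quasiconformal self-map of $\widehat{\mathbb C}$ fixing $\infty$: reflect across $\Gamma$ with a quasiconformal reflection and across $\mathbb{R}$. A quasiconformal map of $\mathbb C$ is quasisymmetric in the metric sense (Gehring/V\"ais\"al\"a), so $G|_{\mathbb{R}}$ is quasisymmetric and hence so is $G^{-1}|_\Gamma$.

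An alternative route to the same conclusion uses the conformal welding $h=G_-^{-1}\circ G_+$, which is quasisymmetric by Beurling--Ahlfors \cite{BA}. Then $G_+$ restricted to $\mathbb{R}$ is the boundary map of a quasiconformal extension to $\mathbb{C}$ obtained by Beurling--Ahlfors gluing.

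Combining the two factors, $\varphi=G^{-1}\circ F$ is an increasing quasisymmetric self-homeomorphism of $\mathbb{R}$, hence $\varphi\in\mathrm{QS}(\mathbb{R})$.
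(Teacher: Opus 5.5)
Your proof is correct and follows essentially the paper's route: quasisymmetry of $F$ from Kovalev--Onninen, quasisymmetry of $G|_{\mathbb{R}}$ and $G^{-1}|_{\Gamma}$ from the quasicircle property of $\Gamma$ via classical quasiconformal theory, and closure under composition. If anything, you are more explicit than the paper: you state that $F$ itself is quasisymmetric (the paper only says $\Gamma$ is a quasicircle, citing Jerison--Kenig), and you check that metric quasisymmetry yields the Beurling--Ahlfors condition.

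Your abandoned direct route plays no role in the argument. Note, though, that your logarithmic bound as written is missing the factor $\lambda\mu$; without it, the ratio estimate from Lemma~\ref{SLZ-F-as:label} blows up like $\log(1/t)$ instead of giving $\eta(t)\to0$.
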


\begin{proof}
By a result of Jerison and Kenig \cite[p.~92]{JK}, a planar Zygmund domain is an NTA domain, and in the plane the NTA condition is equivalent to the boundary being a quasicircle. 
In particular, the graph $\Gamma$ of a Zygmund function $f$ on $\mathbb{R}$ is a quasicircle passing through $\infty$. See also \cite[Theorem 1.4]{KO}.
By the classical quasiconformal theory, the boundary extension
$G|_{\mathbb R}\colon\mathbb R\to\Gamma$ is quasisymmetric (see, e.g. \cite{Ah}), and its inverse $G^{-1}\colon \Gamma\to \mathbb{R}$ is quasisymmetric as well.
Since the composition of quasisymmetric mappings remains quasisymmetric, it follows that the graph welding $\varphi = G^{-1}\circ F$ is quasisymmetric.
\end{proof}

The next theorem provides a sufficient condition for the graph welding associated with a Zygmund function to be symmetric.
Unlike the case of quasisymmetric homeomorphisms, the composition of asymptotically symmetric homeomorphisms does not possess the same property automatically. 
To deal with this issue, we impose the additional assumption $f\in L^{\infty}(\mathbb{R})$ on the claim, although this condition is probably far from sharp.

\begin{theorem}\label{s:label}
Let $f\colon \mathbb{R}\to \mathbb{R}$ be a Zygmund function. 
Assume that $f$ satisfies the following conditions:
\begin{enumerate}
   \item $f\in L^{\infty}(\mathbb{R})$;
   \item $f\in \lambda^{S}_{*}(\mathbb{R})$.
\end{enumerate}
Then the associated graph welding $\varphi$ belongs to $\mathrm{S}(\mathbb{R})$.
\end{theorem}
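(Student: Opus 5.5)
We plan to write $\varphi=G^{-1}\circ F$ and show that each factor is asymptotically symmetric, and that the composition is as well because of uniform continuity. By Theorem~\ref{qs:label}, $\varphi$ is already quasisymmetric. So it remains to show that the ratio $(\varphi(x+t)-\varphi(x))/(\varphi(x)-\varphi(x-t))$ tends to $1$ uniformly as $t\to0$. This will follow once we show that $\varphi$ is asymptotically symmetric as an embedding $\mathbb{R}\to\mathbb{R}$ in the sense of Section~\ref{sec:preliminaries}: taking $a=x+t$, $b=x-t$ and ratio bound $1$ gives the upper bound $1+\varepsilon$, and swapping $a$ and $b$ gives the lower bound.

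\emph{Step 1 (the map $F$).} By Lemma~\ref{SLZ-F-as:label}, since $f\in\lambda^S_*(\mathbb{R})$, the map $F\colon\mathbb{R}\to\Gamma$ is asymptotically symmetric. It is quasisymmetric by \cite[Theorem 1.4]{KO}, since $f$ is Zygmund. $F$ is uniformly continuous because $f$ is Zygmund and hence has modulus of continuity $O(t\log(1/t))$. Also $F^{-1}(x+if(x))=x$ is $1$-Lipschitz.

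\emph{Step 2 (the map $G^{-1}|_\Gamma$).} Since $f\in L^\infty$, $\Gamma$ lies in a bounded horizontal strip. Lemma~\ref{strip} then gives that both $G|_{\mathbb{R}}$ and $G^{-1}|_\Gamma$ are uniformly continuous. Because $F$ is asymptotically symmetric, the chord condition holds: for $w\in\wideparen{ab}$, write $w=F(s)$ with $s$ between $F^{-1}(a)$ and $F^{-1}(b)$, and apply asymptotic symmetry to the triples around $s$. This shows $\Gamma$ is an asymptotically conformal quasicircle. By the theorem following Lemma~\ref{strip}, $\log G'\in B_0(\mathbb{H})$. From the relative Pommerenke-type correspondence \cite[Theorem 1.1]{MT1} together with uniform continuity, we then conclude that $G|_{\mathbb{R}}\colon\mathbb{R}\to\Gamma$ is asymptotically symmetric. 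Lemma~\ref{Finverse_AS:label}, applied to $G|_{\mathbb{R}}$ with $G^{-1}|_\Gamma$ uniformly continuous, then shows that $G^{-1}|_\Gamma$ is asymptotically symmetric.

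\emph{Step 3 (composition).} Let $\psi_1=F$ and $\psi_2=G^{-1}|_\Gamma$, both asymptotically symmetric, with $\psi_1$ uniformly continuous. Given $\varepsilon$ and $t$, first choose $\delta_2$ for $\psi_2$ with parameters $\varepsilon/3$ and $(1+\varepsilon/3)t$. Then choose $\delta_1$ for $\psi_1$ with parameters $\varepsilon/3$ and $t$, shrinking $\delta_1$ so that $\psi_1$ maps balls of radius $\delta_1$ into balls of radius $\delta_2$. The ratio bounds then chain to give $(1+\varepsilon/3)^2t\le(1+\varepsilon)t$. Hence $\varphi$ is asymptotically symmetric, and therefore $\varphi\in\mathrm{S}(\mathbb{R})$.

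The main obstacle is Step 2: passing from the curve-level property ($\Gamma$ asymptotically conformal, or $\log G'\in B_0$) to the statement that the boundary map $G|_{\mathbb{R}}$ is asymptotically symmetric in the three-point sense. We would first try to cite \cite{MT1} directly. Failing that, we would argue locally: rescale $G$ near a boundary point at a small scale $r$, use $\log G'\in B_0$ to show that the normalized rescalings $(G(x_0+rz)-G(x_0))/(rG'(x_0+ir))$ converge to the identity locally uniformly on $\overline{\mathbb{H}}$ as $r\to0$, uniformly in $x_0$, and read off the ratio estimate. Uniform continuity from Lemma~\ref{strip} is exactly what makes the smallness of scales in $\mathbb{R}$ and in $\Gamma$ correspond.
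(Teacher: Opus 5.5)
Your proposal matches the paper's proof. You get $F$ asymptotically symmetric from Lemma~\ref{SLZ-F-as:label}, get $G|_{\mathbb{R}}$ asymptotically symmetric from \cite{MT1} using the uniform continuity supplied by Lemma~\ref{strip}, transfer this to $G^{-1}|_\Gamma$ by Lemma~\ref{Finverse_AS:label}, and compose with the same $\varepsilon/3$ bookkeeping; your explicit check that $\Gamma$ is asymptotically conformal, which is needed to invoke \cite{MT1}, fills in a step the paper leaves implicit. One small correction: the global $O(t\log(1/t))$ modulus of continuity, and hence the uniform continuity of $F$, needs $f\in L^\infty(\mathbb{R})$ and not merely the Zygmund condition, since $x\log|x|$ is Zygmund on $\mathbb{R}$ but not uniformly continuous.
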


\begin{proof}
Since $f \in L^\infty(\mathbb{R})$, the graph $\Gamma$ is contained in a bounded horizontal strip. 
By Lemma~\ref{strip}, both $G|_{\mathbb{R}}$ and $G^{-1}|_{\Gamma}$ are uniformly continuous. 
Moreover, \cite[Theorem~3.1]{MT1} implies that $G|_{\mathbb{R}}$ is an asymptotically symmetric embedding onto $\Gamma$. 
Since $G^{-1}|_{\Gamma}$ is uniformly continuous, it follows from Lemma~\ref{Finverse_AS:label} that $G^{-1}|_{\Gamma}$ is also asymptotically symmetric.

On the other hand, the assumption $f\in \lambda_*^{S}(\mathbb{R})$ implies that $F$ is an asymptotically symmetric embedding by Lemma~\ref{SLZ-F-as:label}. 
Hence, for any $\varepsilon_1>0$ and $t_1>0$, there exists $\delta_1>0$ such that
\begin{equation}\label{F_as_new}
\frac{|x-a|}{|x-b|}\leq t_1 \quad \implies \quad \frac{|F(x)-F(a)|}{|F(x)-F(b)|} \leq (1+\varepsilon_1)t_1
\end{equation}
whenever $x,a,b\in\mathbb{R}$ lie in a ball of radius $\delta_1$.

Similarly, for any $\varepsilon_2>0$ and $t_2>0$, there exists $\delta_2>0$ such that
\begin{equation}\label{G_inverse_as_new}
\frac{|\tilde x-\tilde a|}{|\tilde x-\tilde b|}\leq t_2 \quad \implies\quad\frac{|G^{-1}(\tilde x)-G^{-1}(\tilde a)|}{|G^{-1}(\tilde x)-G^{-1}(\tilde b)|}\leq (1+\varepsilon_2)t_2
\end{equation}
whenever $\tilde x,\tilde a,\tilde b\in\Gamma$ lie in a ball of radius $\delta_2$.

Furthermore, $F$ is uniformly continuous, since every little Zygmund function satisfies the modulus of continuity estimate $o(h\log(1/h))$ as $h\to0$; see, for example, \cite{Zy}. Let $\omega(t)$ be the modulus of continuity of $F$. 
Choose $\delta'>0$ such that $\omega(2\delta')<\delta_2$.

Now let $0<\varepsilon<1$ and $t>0$ be given. Set
\[
t_1=t,\qquad\varepsilon_1=\frac{\varepsilon}{3},\qquad 
t_2=(1+\varepsilon_1)t_1,\qquad\varepsilon_2=\varepsilon_1,
\]
and define $\delta=\min\{\delta_1,\delta'\}$.

Suppose that $x,a,b\in\mathbb{R}$ lie in a ball of radius $\delta$ and satisfy
\[
\frac{|x-a|}{|x-b|}\leq t.
\]
Then \eqref{F_as_new} yields
\[
\frac{|F(x)-F(a)|}{|F(x)-F(b)|}\leq (1+\varepsilon_1)t_1=t_2.
\]

Since $x,a,b$ lie in a ball of radius $\delta$, we have
\[
|x-a|<2\delta\leq 2\delta',\qquad|x-b|<2\delta\leq 2\delta'.
\]
Hence
\[
|F(x)-F(a)|\leq \omega(2\delta')<\delta_2,
\]
and similarly
\[
|F(x)-F(b)|<\delta_2.
\]
Therefore $F(x),F(a),F(b)\in\Gamma$ lie in a ball of radius $\delta_2$, and so \eqref{G_inverse_as_new} implies that
\[
\frac{|G^{-1}\circ F(x)-G^{-1}\circ F(a)|}{|G^{-1}\circ F(x)-G^{-1}\circ F(b)|}\le
(1+\varepsilon_2)t_2=(1+\varepsilon_1)^2 t_1\leq (1+\varepsilon)t.
\]
Hence $\varphi=G^{-1}\circ F$ is asymptotically symmetric. 
In particular, it is symmetric.
\end{proof}

\begin{remark}
We also see that $\varphi^{-1}=F^{-1} \circ G|_{\mathbb{R}}$ is in $\mathrm{S}(\mathbb{R})$.
Since $\varphi \in \mathrm{S}(\mathbb{R})$ and $\varphi^{-1}$ is uniformly continuous, this follows from \cite[Corollary 3.4]{WM23}.
\end{remark}

We next investigate graph weldings involving $\mathrm{BMO}$, $\mathrm{VMO}$, and $H^{1/2}$.
To this end, we first establish several auxiliary lemmas.

The first lemma was originally proved by Stein and Zygmund
\cite[p.~343]{SZ}.
For the convenience of the reader, we include a simpler proof, which also applies to the $\mathrm{VMO}$--little Zygmund correspondence.

\begin{lemma}\label{bmo-zyg}
Let $f\colon  \mathbb{R} \to \mathbb{R}$ be locally absolutely continuous. 
If $f' \in \mathrm{BMO}(\mathbb{R})$ (respectively, $\mathrm{VMO}(\mathbb{R})$), then $f\in \Lambda_*(\mathbb{R})$ (respectively, $\lambda_*(\mathbb{R})$).
\end{lemma}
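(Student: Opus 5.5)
Since $f$ is locally absolutely continuous, for every $x\in\mathbb{R}$ and $h>0$ the second difference is an integral of $f'$. Set $I_+=[x,x+h]$, $I_-=[x-h,x]$ and $I=[x-h,x+h]$, so that $|I|=2h$. Then
\[
f(x+h)+f(x-h)-2f(x)=\int_{I_+}f'(t)\,dt-\int_{I_-}f'(t)\,dt
= h\bigl(f'_{I_+}-f'_{I_-}\bigr),
\]
where $f'_J$ is the integral mean of $f'$ over $J$. The task is therefore to bound $|f'_{I_+}-f'_{I_-}|$ by the mean oscillation of $f'$ on the enclosing interval $I$.

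We subtract the mean $f'_I$ and use that $I_\pm\subset I$ with $|I_\pm|=|I|/2$:
\[
|f'_{I_\pm}-f'_I|\le \frac{1}{|I_\pm|}\int_{I_\pm}|f'-f'_I|\,dt\le \frac{2}{|I|}\int_I|f'-f'_I|\,dt .
\]
The triangle inequality then gives
\[
\frac{|f(x+h)+f(x-h)-2f(x)|}{h}\le \frac{4}{|I|}\int_I |f'(t)-f'_I|\,dt\le 4\|f'\|_{\mathrm{BMO}(\mathbb{R})}.
\]
The case $h<0$ is identical by symmetry. Since $f$ is continuous, this yields $f\in\Lambda_*(\mathbb{R})$ with $\|f\|_{\Lambda_*(\mathbb{R})}\le 4\|f'\|_{\mathrm{BMO}(\mathbb{R})}$.

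For the $\mathrm{VMO}$ case the same inequality applies with $|I|=2|h|$. As $h\to0$ the right-hand side is $4$ times the mean oscillation of $f'$ over intervals of length $2|h|$, which tends to $0$ uniformly in $x$ by the definition of $\mathrm{VMO}(\mathbb{R})$. Hence $\sup_x |f(x+h)+f(x-h)-2f(x)|/|h|\to0$, that is, $f\in\lambda_*(\mathbb{R})$.

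There is no serious obstacle here. The only points to check are that the Newton--Leibniz formula holds for locally absolutely continuous $f$ (Proposition~\ref{local_BZ on R:label}), and that $f'\in L^1_{\mathrm{loc}}$, which is needed for the BMO means to make sense.
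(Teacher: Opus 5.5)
Your proof is correct and is essentially the paper's argument: both write the second difference as an integral of $f'$, subtract the mean $(f')_I$ over $I=[x-h,x+h]$, and apply the triangle inequality, with the VMO case following from the same estimate. The only difference is that you bound each half-interval term separately by the full oscillation on $I$, which gives the constant $4\|f'\|_{\mathrm{BMO}(\mathbb{R})}$; combining the two half-integrals, as the paper does, gives $2\|f'\|_{\mathrm{BMO}(\mathbb{R})}$, and the difference is immaterial for the lemma.
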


\begin{proof}
Since $f$ is locally absolutely continuous, the fundamental theorem of calculus yields
\[
f(x+h) + f(x-h) - 2f(x)= \int_{0}^{h} \bigl(f'(x+t) - f'(x-t)\bigr)\,dt.
\]

To estimate this integral, let $I = [x-h, x+h]$ and denote the mean of $f'$ over $I$ by
\[
(f')_I \coloneqq \frac{1}{2h}\int_{x-h}^{x+h} f'(t)\,dt.
\]
Then
\[
f'(x+t) - f'(x-t)= \bigl(f'(x+t)-(f')_I\bigr) - \bigl(f'(x-t)-(f')_I\bigr),
\]
and hence
\begin{align*}
|f(x+h) + f(x-h) - 2f(x)|&\leq \int_{0}^{h} |f'(x+t)-(f')_I|\,dt+ \int_{0}^{h} |f'(x-t)-(f')_I|\,dt\\
   &=\int_{x-h}^{x+h} |f'(t)-(f')_I|\,dt .
\end{align*}

By the definition of the $\mathrm{BMO}$ norm, this last term is bounded by $2h \|f'\|_{\mathrm{BMO}(\mathbb{R})}$. 
Therefore,
\[
|f(x+h) + f(x-h) - 2f(x)|\leq 2h\,\|f'\|_{\mathrm{BMO}(\mathbb{R})}.
\]
This shows that $f$ belongs to the Zygmund class $\Lambda_{*}(\mathbb{R})$.

If, in addition, $f'\in \mathrm{VMO}(\mathbb R)$, then the mean oscillation of $f'$ over $[x-h,x+h]$ tends to zero as $h\to0$, uniformly in $x$. 
The same argument yields
\[
\frac{|f(x+h)+f(x-h)-2f(x)|}{h} \to 0,
\]
showing that $f$ belongs to the little Zygmund class $\lambda_{*}(\mathbb{R})$.
\end{proof}

The next lemma describes the corresponding regularity of the graph $\Gamma$
under the assumptions $f'\in\mathrm{BMO}(\mathbb R)$ and $f'\in\mathrm{VMO}(\mathbb R)$.
The first assertion is a special case of a more general result of
Jerison and Kenig \cite[p.~92]{JK}. 
Nevertheless, for the convenience of the reader, we sketch a direct proof in the planar case, from which the second assertion also follows. 

\begin{lemma}\label{bmo1=ca:label}
Let $f\colon \mathbb{R}\to\mathbb{R}$ be locally absolutely continuous. 
If $f' \in \mathrm{BMO}(\mathbb{R})$, then the graph $\Gamma$ is a chord-arc curve. 
Moreover, if $f' \in \mathrm{VMO}(\mathbb{R})$, then $\Gamma$ is an asymptotically smooth chord-arc curve.
\end{lemma}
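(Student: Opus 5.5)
For $a<b$ we have $\ell(\wideparen{F(a)F(b)}) = \int_a^b \sqrt{1+f'(t)^2}\,dt$ and $|F(b)-F(a)|=\sqrt{(b-a)^2+(f(b)-f(a))^2}$. Write $I=[a,b]$, $m=(f')_I$ and $u=f'-m$. Then $f(b)-f(a)=m|I|$, so the chord has length $|I|\sqrt{1+m^2}$. The plan is to compare the arc length with $\int_I\sqrt{1+m^2}$ using the elementary inequality
\[
\sqrt{1+(m+u)^2}\le \sqrt{1+m^2}+|u| .
\]
This holds because $s\mapsto\sqrt{1+s^2}$ is $1$-Lipschitz. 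Integrating over $I$ gives
\[
\frac{\ell(\wideparen{F(a)F(b)})}{|F(b)-F(a)|}\le 1+\frac{1}{\sqrt{1+m^2}}\cdot\frac{1}{|I|}\int_I |f'-(f')_I|\,dt\le 1+\|f'\|_{\mathrm{BMO}(\mathbb{R})}.
\]
Hence $\Gamma$ is chord-arc with constant at most $1+\|f'\|_{\mathrm{BMO}}$, uniformly over all pairs. One small point: the arc $\wideparen{F(a)F(b)}$ in the paper's definition is the subarc of smaller diameter. For a graph through $\infty$, the bounded arc is the one between $F(a)$ and $F(b)$ over $[a,b]$, so this is consistent.

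For the VMO part, the same inequality gives
\[
\frac{\ell}{|F(b)-F(a)|}-1\le \frac{1}{|I|}\int_I|f'-(f')_I|\,dt .
\]
This tends to $0$ uniformly as $|I|\to0$. It remains to pass from $|I|=b-a\to0$ to $|F(b)-F(a)|\to0$, which is what the definition of asymptotic smoothness requires. This is the one step needing care. Since $|F(b)-F(a)|\ge b-a$, smallness of the chord forces smallness of $|I|$, and that direction is exactly the one needed. Therefore $\lim_{|F(a)-F(b)|\to0}\ell/|F(a)-F(b)|=1$ uniformly.

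I expect no real obstacle; the only subtleties are the Lipschitz inequality above and the direction of the scale comparison. As a check on consistency, Lemma~\ref{bmo-zyg} is not needed here, but the result agrees with it: a chord-arc graph is a quasicircle.
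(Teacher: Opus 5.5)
Your proof is correct and follows essentially the same route as the paper. You write the chord as $|I|\sqrt{1+((f')_I)^2}$ and use the $1$-Lipschitz property of $s\mapsto\sqrt{1+s^2}$ to bound the arc-to-chord ratio minus one by the mean oscillation of $f'$ on $I$. Your remark that $|F(b)-F(a)|\ge b-a$, so small chords force small intervals, makes explicit a step the paper leaves implicit in the $\mathrm{VMO}$ case.
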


\begin{proof}
For any interval $I=[a,b]$, denote $(f')_I=\tfrac{1}{|I|}\int_I f'(x)\,dx$.
By the fundamental theorem of calculus, the chord length of $\Gamma_I$ is
\begin{align*}
\sqrt{|I|^2+(f(b)-f(a))^2}= \sqrt{|I|^2+|I|^2\big((f')_I\big)^2} = |I|\sqrt{1+\big((f')_I\big)^2}.
\end{align*}
Meanwhile, the arc length of the graph $\Gamma_I$ over $I$ is given by
\[
\ell(\Gamma|_I)=\int_I \sqrt{1+f'(x)^2}\,dx.
\]
Hence,
\[
\frac{\ell(\Gamma|_I)}{\sqrt{|I|^2+(f(b)-f(a))^2}}= \frac{\frac{1}{|I|}\int_I \sqrt{1+f'(x)^2}\,dx}{\sqrt{1+\big((f')_I\big)^2}}.
\]

To estimate this quantity, we write
\begin{align*}
\left|\frac{\frac{1}{|I|}\int_I \sqrt{1+f'(x)^2}\,dx}{\sqrt{1+\big((f')_I\big)^2}} -1\right|\leq \frac{\frac{1}{|I|}\int_I \left|\sqrt{1+f'(x)^2}-\sqrt{1+\big((f')_I\big)^2}\right|\,dx}{\sqrt{1+\big((f')_I\big)^2}}.
\end{align*}
Notice that the denominator is bounded below by $1$, and the function $t\mapsto \sqrt{1+t^2}$ is $1$-Lipschitz. Therefore, $\left|\sqrt{1+x^2}-\sqrt{1+y^2}\right| \leq |x-y|$, which implies
\[
\left|\sqrt{1+f'(x)^2}-\sqrt{1+\big((f')_I\big)^2}\right|\leq \left|f'(x)-(f')_I\right|.
\]
Therefore,
\[
\left|\frac{\ell(\Gamma|_I)}{\sqrt{|I|^2+(f(b)-f(a))^2}} -1\right|\leq \frac{1}{|I|}\int_I \left|f'(x)-(f')_I\right|\,dx.
\]

Since $f'\in \mathrm{BMO}(\mathbb{R})$, the right-hand side is uniformly bounded by $\|f'\|_{\mathrm{BMO}(\mathbb{R})}$. Consequently,
\[
\frac{\ell(\Gamma|_I)}{\sqrt{|I|^2+(f(b)-f(a))^2}}\leq 1 + \|f'\|_{\mathrm{BMO}(\mathbb{R})},
\]
which proves that $\Gamma$ is a chord-arc curve. 

If $f'\in \mathrm{VMO}(\mathbb{R})$, the mean oscillation satisfies
\[
\frac{1}{|I|}\int_I \left|f'(x)-(f')_I\right|\,dx \to 0\quad \text{as } |I|\to 0,
\]
uniformly over all intervals $I$. This forces the ratio of arc length to chord length to approach $1$ as $|I|\to 0$, yielding that $\Gamma$ is asymptotically smooth.
\end{proof}

The regularity of $f'$ can also be transferred to
$\log |F'|$.

\begin{lemma}\label{lip_of_F:label}
Let $f\colon \mathbb{R}\to\mathbb{R}$ be locally absolutely continuous. Then
\[
|f'| \in X(\mathbb{R}) \implies \log |F'| \in X(\mathbb{R}),
\]
where $X=\mathrm{BMO}, \mathrm{VMO}, H^{1/2}$.
\end{lemma}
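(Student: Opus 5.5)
Since $F(x)=x+if(x)$ with $f$ locally absolutely continuous, $F'(x)=1+if'(x)$ almost everywhere. Hence
\[
\log|F'(x)|=\tfrac12\log\bigl(1+f'(x)^2\bigr)=\Psi\bigl(|f'(x)|\bigr),\qquad \Psi(t)\coloneqq\tfrac12\log(1+t^2),\ t\ge0 .
\]
The plan is to show that $\Psi$ is Lipschitz on $[0,\infty)$ and that each of the three spaces is stable under post-composition with Lipschitz functions. We have $\Psi'(t)=t/(1+t^2)\in[0,1/2]$, so
\[
|\Psi(s)-\Psi(t)|\le \tfrac12|s-t| .
\]
Also $0\le\Psi(t)\le t$, so $\Psi(|f'|)\in L^1_{\mathrm{loc}}$ whenever $|f'|$ is, and all the quantities below make sense.

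For $X=H^{1/2}$ the claim follows pointwise from the Lipschitz bound:
\[
\bigl|\log|F'(x)|-\log|F'(y)|\bigr|^2\le \tfrac14\bigl||f'(x)|-|f'(y)|\bigr|^2 .
\]
Integrating against $|x-y|^{-2}\,dx\,dy$ gives finiteness of the double integral for $\log|F'|$.

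For $X=\mathrm{BMO}$ and $\mathrm{VMO}$, we use the standard comparison: for $u\in L^1_{\mathrm{loc}}$ and any constant $c$,
\[
\frac1{|I|}\int_I|u-u_I|\le \frac{2}{|I|}\int_I|u-c| .
\]
Put $u=\Psi(|f'|)$ and $c=\Psi\bigl((|f'|)_I\bigr)$, noting that $(|f'|)_I\ge0$ is in the domain of $\Psi$. This yields
\[
\frac1{|I|}\int_I\bigl|\log|F'|-(\log|F'|)_I\bigr|\le \frac{1}{|I|}\int_I\bigl||f'|-(|f'|)_I\bigr| .
\]
Taking the supremum over all intervals gives $\|\log|F'|\|_{\mathrm{BMO}}\le\||f'|\|_{\mathrm{BMO}}$. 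Letting $|I|\to0$ uniformly, the VMO condition transfers in the same way.

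I expect no real obstacle here. The only points needing care are the a.e.\ identity $|F'|=\sqrt{1+f'^2}$, which comes from local absolute continuity, and the choice of the constant $c$ in the BMO estimate, since the mean of $\Psi(|f'|)$ is not $\Psi$ of the mean; the factor-of-two comparison handles that. One should also remark that the hypothesis is on $|f'|$, which follows, for example, from $f'\in X$ because $\bigl||a|-|b|\bigr|\le|a-b|$.
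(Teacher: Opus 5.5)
Your proof is correct and follows the same route as the paper. Both write $\log|F'|=\frac12\log(1+|f'|^2)$, use that $u\mapsto\frac12\log(1+u^2)$ is Lipschitz with constant $1/2$, compare mean oscillations against the constant $\psi\bigl((|f'|)_I\bigr)$, and integrate the pointwise Lipschitz bound for $H^{1/2}$. Your explicit appeal to $\frac1{|I|}\int_I|u-u_I|\le\frac2{|I|}\int_I|u-c|$ makes precise a step the paper leaves implicit.
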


\begin{proof}
Since $f$ is locally absolutely continuous, we have $F'(x)=1+i f'(x)$ for a.e.\ $x\in\mathbb{R}$, which yields
\[
\log |F'(x)|=\frac{1}{2}\log\big(1+|f'(x)|^2\big).
\]
Thus, it suffices to investigate the composition with the function $\psi(u)=\tfrac{1}{2}\log(1+u^2)$.

Notice that $\psi$ is globally Lipschitz on $\mathbb{R}$. 
Indeed, $\psi'(u)=\frac{u}{1+u^2}\leq 1/2$ for all $u\in\mathbb R$.
In particular, for any interval $I\subset\mathbb{R}$,
\[
\frac{1}{|I|}\int_{I}\left|\psi(|f'(x)|)-\psi(|f'|_{I})\right|\, dx\leq \frac{1}{|I|}\int_{I}\big||f'(x)|-|f'|_I\big|\, dx.
\]
This shows that $\psi(|f'|)\in \mathrm{BMO}(\mathbb{R})$ (respectively, $\mathrm{VMO}(\mathbb{R})$) whenever $|f'|\in \mathrm{BMO}(\mathbb{R})$ (respectively, $\mathrm{VMO}(\mathbb{R})$).

Similarly, for the $H^{1/2}$ seminorm, the Lipschitz property again yields
\[
\frac{\left|\psi(|f'(x)|)-\psi(|f'(y)|)\right|^2}{|x-y|^2}\leq \frac{\big||f'(x)|-|f'(y)|\big|^2}{|x-y|^2}.
\]
Integrating over $\mathbb{R}\times\mathbb{R}$ establishes the desired bound for the $H^{1/2}$ seminorm.

Combining these estimates, we conclude that $\psi(|f'|)\in \mathrm{BMO}(\mathbb{R})$, $\mathrm{VMO}(\mathbb{R})$, $H^{1/2}(\mathbb{R})$ whenever $|f'|$ belongs to the corresponding space. 
\end{proof}

The following technical claim is also used.

\begin{lemma}\label{BMOimpliesA_infty}
For $u\in \mathrm{BMO}(\mathbb{R})$, $w(x)=1+|u(x)|$ is an $A_\infty$-weight on $\mathbb{R}$.
\end{lemma}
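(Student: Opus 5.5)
The plan is to pass through the exponential class: I will show that $w=1+|u|$ satisfies a reverse Jensen / Muckenhoupt-type inequality by combining the John--Nirenberg inequality with elementary pointwise bounds. First I reduce to a nonnegative BMO function. Since $\bigl||u(x)|-|u(y)|\bigr|\le |u(x)-u(y)|$, we have $|u|\in\mathrm{BMO}(\mathbb R)$ with $\||u|\|_{\mathrm{BMO}}\le 2\|u\|_{\mathrm{BMO}}$. Hence $v:=|u|\ge 0$ lies in BMO, and $w=1+v$.

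The key step is to verify an $A_\infty$ criterion; I will use the reverse Hölder inequality. Fix an interval $I$ and write $m=v_I\ge 0$. By John--Nirenberg, for some $p>1$ (indeed for every $p<\infty$),
\[
\Bigl(\frac1{|I|}\int_I |v-m|^p\Bigr)^{1/p}\le C_p\|v\|_{\mathrm{BMO}}.
\]
Therefore
\[
\Bigl(\frac1{|I|}\int_I w^p\Bigr)^{1/p}\le 1+m+C_p\|v\|_{\mathrm{BMO}}\le \max\{1,C_p\|v\|_{\mathrm{BMO}}\}\,(1+m)\cdot 2,
\]
and since $1+m=\frac1{|I|}\int_I w$, this gives $\bigl(\frac{1}{|I|}\int_I w^p\bigr)^{1/p}\le C\,\frac1{|I|}\int_I w$ with $C$ independent of $I$. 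The additive constant $1$ is exactly what absorbs the oscillation term when $m$ is small. This is the point I expect to be the only real subtlety: without the $1$, the function $|u|$ itself need not be $A_\infty$.

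Finally, by the classical equivalence (Coifman--Fefferman \cite{CF}), the reverse Hölder inequality implies the $A_\infty$ condition in the form stated in the paper. Concretely, for $E\subset I$, Hölder gives
\[
w(E)\le |E|^{1/p'}\Bigl(\int_I w^p\Bigr)^{1/p}\le C|I|^{1/p}|E|^{1/p'}\frac{w(I)}{|I|},
\]
that is, $w(E)/w(I)\le C(|E|/|I|)^{1/p'}$, so we may take $\delta=1/p'$. I will also note that $w$ is locally integrable and positive, since $u\in L^1_{\mathrm{loc}}$.
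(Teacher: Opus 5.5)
Your proof is correct, but it takes a different route from the paper's. The paper never uses John--Nirenberg. It relies on Hru\v{s}\v{c}ev's characterization of $A_\infty$: the ratio of the arithmetic mean $\frac{1}{|I|}\int_I w$ to the geometric mean $\exp\{\frac{1}{|I|}\int_I \log w\}$ must be bounded uniformly in $I$. It bounds the arithmetic mean by $1+|u_I|+B$ directly from the definition of $\mathrm{BMO}$. It bounds the geometric mean from below by applying Chebyshev's inequality to the set where $|u-u_I|\le |u_I|/2$, and this yields the explicit constant $(2+B)e^{B}$.

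So the paper needs only first-moment ($L^1$) control of the oscillation, but it hands the passage to the stated definition of $A_\infty$ over to a nontrivial characterization theorem. You pay up front with John--Nirenberg to get $L^p$ control of the oscillation of $|u|$, and after that everything is elementary. Minkowski gives the reverse H\"older inequality, with the constant $1$ absorbing the oscillation term exactly as you observe. Your H\"older computation then verifies the paper's definition of $A_\infty$ directly with $\delta=1/p'$, so the appeal to Coifman--Fefferman is not actually needed.

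Your route also gives more. It yields the reverse H\"older inequality for every $p<\infty$, so $\delta$ can be taken arbitrarily close to $1$. A general $A_\infty$ weight need not satisfy this; for example, $|x|^{-1/2}$ fails $\mathrm{RH}_2$.
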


\begin{proof}
We use the following characterization of $A_\infty$ shown in \cite{Hr}: a non-negative locally integrable function $w$ belongs to $A_\infty$ if and only if
\begin{equation}\label{A_infty-condition}
\sup_I\left(\frac{1}{|I|}\int_I w(x)\,dx\right)\exp\left\{-\frac{1}{|I|}\int_I \log w(x)\,dx\right\}<\infty,
\end{equation}
where the supremum is taken over all bounded intervals $I\subset \mathbb{R}$.

Let $B=\|u\|_{{\rm BMO}(\mathbb{R})}$. For any bounded interval $I\subset \mathbb{R}$, we have
\[
\frac{1}{|I|}\int_I |u(x)|\,dx \leq|u_I|+\frac{1}{|I|}\int_I |u(x)-u_I|\,dx\leq|u_I|+B.
\]
Hence
\begin{equation}\label{integralmean}
\frac{1}{|I|}\int_I w(x)\,dx\leq 1+|u_I|+B.
\end{equation}

We next estimate the geometric mean of $w$.

If $|u_I|=0$, then \eqref{integralmean} yields
\[
\frac{1}{|I|}\int_I w(x)\,dx\leq 1+B.
\]
Since $\log w\geq 0$, we have
\[
\exp\left\{-\frac{1}{|I|}\int_I \log w(x)\,dx\right\}\leq 1.
\]
Therefore,
\[
\left(\frac{1}{|I|}\int_I w(x)\,dx\right)\exp\left\{-\frac{1}{|I|}\int_I \log w(x)\,dx\right\}\leq 1+B.
\]

Hence, it remains to consider the case $|u_I|\neq 0$. Let
\[
E=\{x\in I:\ |u(x)-u_I|\leq |u_I|/2\}.
\]
By Chebyshev's inequality,
\begin{equation}\label{cheby}
\frac{|I\setminus E|}{|I|}\leq\frac{2}{|u_I||I|}\int_I |u(x)-u_I|\,dx\leq\frac{2B}{|u_I|}.
\end{equation}
For $x\in E$, we have
\[
|u(x)|\geq |u_I|-|u(x)-u_I|\geq |u_I|/2.
\]
This gives
\begin{equation}\label{logw}
\log w(x)=\log(1+|u(x)|)\geq \log(1+|u_I|/2),\qquad x\in E.
\end{equation}
Since $\log w\geq 0$, it follows from \eqref{cheby} and \eqref{logw} that
\[
\frac{1}{|I|}\int_I \log w(x)\,dx\geq\frac{|E|}{|I|}\log(1+|u_I|/2)\geq\left(1-\frac{2B}{|u_I|}\right)\log(1+|u_I|/2).
\]
Consequently,
\begin{equation}\label{geometricmean}
\exp\left\{\frac{1}{|I|}\int_I \log w(x)\,dx\right\}\geq(1+|u_I|/2)^{1-2B/|u_I|}.
\end{equation}

The combination of \eqref{integralmean} and \eqref{geometricmean} yields
\[
\begin{aligned}
&\quad\left(\frac{1}{|I|}\int_I w(x)\,dx\right)\exp\left\{-\frac{1}{|I|}\int_I \log w(x)\,dx\right\} \\ 
&\leq\frac{1+|u_I|+B}{1+|u_I|/2}(1+|u_I|/2)^{2B/|u_I|}\leq(2+B)e^B.
\end{aligned}
\]
Since this bound is independent of $I$, we have verified \eqref{A_infty-condition}.
\end{proof}

We now combine the previous regularity results under the assumption $f'\in \mathrm{BMO}(\mathbb{R})$ to establish the strong quasisymmetry of the associated graph welding.
The condition $f'\in \mathrm{BMO}(\mathbb{R})$ is often denoted by 
$f \in \mathrm{BMO}_1(\mathbb{R})$ in the literature.

\begin{theorem}\label{bmo:label} 
Let $f\colon \mathbb{R}\to \mathbb{R}$ be a locally absolutely continuous function. 
Assume that $f'\in \mathrm{BMO}(\mathbb{R})$.
Then the associated graph welding $\varphi$ belongs to $\mathrm{SQS}(\mathbb{R})$.
\end{theorem}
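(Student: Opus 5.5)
We prove $\varphi\in\mathrm{SQS}(\mathbb{R})$ by writing $\varphi'$ as a product of two pieces, one coming from $G$ and one from $F$, and showing that this product is an $A_\infty$-weight.

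First, quasisymmetry and local absolute continuity. By Lemma~\ref{bmo-zyg}, $f'\in\mathrm{BMO}(\mathbb{R})$ gives $f\in\Lambda_*(\mathbb{R})$, so Theorem~\ref{qs:label} yields $\varphi\in\mathrm{QS}(\mathbb{R})$. Since $f$ is locally absolutely continuous, Theorem~\ref{ac_f:label} shows that $\varphi$ is locally absolutely continuous. It remains to show that $\varphi'\in A_\infty$.

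Second, the chord-arc structure. By Lemma~\ref{bmo1=ca:label}, $\Gamma$ is a chord-arc curve. For chord-arc curves the conformal boundary parametrization is well understood: by the Lavrentiev / Jerison--Kenig theory, harmonic measure on $\Gamma$ relative to $\Omega$ is $A_\infty$ with respect to arc-length. Equivalently, $G|_{\mathbb{R}}$ is locally absolutely continuous, $|G'|\in A_\infty(\mathbb{R})$ and $\log G'\in\mathrm{BMOA}(\mathbb{H})$. Here the correspondence of \cite{MT1} between chord-arc curves through $\infty$ and the $\mathrm{BMOA}$ setting is used. I would quote this as the standard fact that $G^{-1}|_\Gamma$, viewed as a map from $\Gamma$ with arc-length onto $\mathbb{R}$, has $A_\infty$ density: $\ell$ and the pullback of Lebesgue measure by $G^{-1}$ are mutually $A_\infty$ on $\Gamma$.

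Third, the chain rule. Since $F=G\circ\varphi$ and all maps are locally absolutely continuous (Lemma~\ref{local inverse AC:label} and Theorem~\ref{RP-UHP:label}), we have, almost everywhere,
\[
\varphi'(x)=\frac{|F'(x)|}{|G'(\varphi(x))|}=|F'(x)|\cdot\omega(F(x)),
\]
where $\omega=d(G^{-1})/ds$ is the density of $G^{-1}$ with respect to arc-length on $\Gamma$. Now $|F'|=\sqrt{1+f'^2}$ is comparable to $1+|f'|$, which is an $A_\infty$-weight by Lemma~\ref{BMOimpliesA_infty}. Hence the measure $|F'|\,dx$ is $A_\infty$-equivalent to Lebesgue measure on $\mathbb{R}$. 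But $|F'|\,dx$ is exactly the pullback of arc-length under $F$. Since $F$ is bi-Lipschitz with respect to chord length up to the chord-arc constant, intervals $I\subset\mathbb{R}$ correspond to subarcs $F(I)$, and those subarcs are comparable to balls on $\Gamma$.

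Fourth, the conclusion. The measure $\varphi'\,dx$ is the pullback under $F$ of $\omega\,ds$, and $\omega\,ds$ is $A_\infty$ relative to $ds$ on $\Gamma$. The $A_\infty$ relation is transitive (it is an equivalence relation on doubling measures) and is preserved under the correspondence $I\leftrightarrow F(I)$. Therefore $\varphi'\,dx$ is $A_\infty$ relative to $|F'|\,dx$, which in turn is $A_\infty$ relative to $dx$, so $\varphi'\in A_\infty$.

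The main obstacle is the transitivity step together with the transfer of $A_\infty$ through $F$. This requires doubling of the intermediate measure $|F'|\,dx$ on intervals, which follows from the $A_\infty$ property in Lemma~\ref{BMOimpliesA_infty}, and it requires the precise formulation of the chord-arc harmonic-measure $A_\infty$ theorem for unbounded curves. I would handle the latter by a Möbius transformation to a bounded chord-arc curve, using the Möbius invariance of the chord-arc property noted in Section~\ref{sec:preliminaries}.
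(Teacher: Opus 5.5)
Your argument is correct and is essentially the paper's proof. Both factor through arc-length on $\Gamma$ and combine $|F'|\asymp 1+|f'|\in A_\infty$ (Lemma~\ref{BMOimpliesA_infty}) with the chord-arc fact $|G'|\in A_\infty$; your transitivity of $A_\infty$ under pullback by $F$ is the unpacked form of the paper's group-property step $\varphi^{-1}=(F^{-1}\circ\gamma)\circ(\gamma^{-1}\circ G|_{\mathbb{R}})$ in $\mathrm{SQS}(\mathbb{R})$. One small slip: $F$ itself is not bi-Lipschitz when $f'$ is unbounded (the arc-length parametrization $\gamma$ is), but you only need that intervals map to subarcs comparable to balls, which holds anyway.
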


\begin{proof}
Recall that $F(x)=x+if(x)$. A direct computation gives
\[
\frac{1+|f'(x)|}{2}\leq |F'(x)|=\sqrt{1+|f'(x)|^2}\leq  1+|f'(x)|.
\]
Since $1+|f'|\in A_{\infty}$ by Lemma~\ref{BMOimpliesA_infty}, it immediately follows that $|F'|\in A_{\infty}$.

Since $f$ is locally absolutely continuous, so is $F$. 
By Lemma~\ref{bmo1=ca:label}, the curve $\Gamma=F(\mathbb{R})$ is chord-arc. 
Let $\gamma$ be an arc-length parametrization of $\Gamma$. 
Then $\gamma$ is bi-Lipschitz and satisfies $|\gamma'|=1$ a.e..
Consequently, the composition $\gamma^{-1}\circ F$ is locally absolutely continuous. 
Applying the chain rule, we obtain
\[
|(\gamma^{-1}\circ F)'(x)|=|(\gamma^{-1})'\circ F(x)|\cdot |F'(x)|=|F'(x)|
\]
for almost every $x \in \mathbb{R}$. 
Since $|F'| \in A_{\infty}$, this implies that $\gamma^{-1}\circ F\in \mathrm{SQS}(\mathbb{R})$. 
Since $\mathrm{SQS}(\mathbb{R})$ forms a group under composition, its inverse $F^{-1}\circ \gamma$ also belongs to $\mathrm{SQS}(\mathbb{R})$.

On the other hand, $\Gamma$ is a chord-arc curve implies that $G|_{\mathbb{R}}$ is locally absolutely continuous and $|(G|_{\mathbb{R}})'|\in A_{\infty}$ \cite[Theorem 3.6]{WM}. 
Therefore, the composition $\gamma^{-1}\circ G|_{\mathbb{R}}$ is locally absolutely continuous, with
\[
|(\gamma^{-1}\circ G|_{\mathbb{R}})'|=|(G|_{\mathbb{R}})'|\in A_{\infty}.
\]
Hence, $\gamma^{-1}\circ G|_{\mathbb{R}}\in \mathrm{SQS}(\mathbb{R})$.

Finally, we write
\[
\varphi^{-1}=F^{-1}\circ G|_{\mathbb R},=F^{-1}\circ \gamma\circ \gamma^{-1}\circ G|_{\mathbb R}.
\]
Since both $F^{-1}\circ \gamma$ and $\gamma^{-1}\circ G|_{\mathbb{R}}$ belong to $\mathrm{SQS}(\mathbb{R})$, their composition $\varphi^{-1}$ also belongs to $\mathrm{SQS}(\mathbb{R})$. 
Therefore, $\varphi\in \mathrm{SQS}(\mathbb{R})$.
\end{proof}

The corresponding $\mathrm{VMO}$ version can be stated as follows.
As in the case of $\mathrm{S}(\mathbb{R})$ in Theorem~\ref{s:label}, we need the additional assumption $f\in L^{\infty}(\mathbb{R})$ to handle the composition of the relevant mappings.

\begin{theorem}\label{vmo:label} 
Let $f\colon \mathbb{R}\to \mathbb{R}$ be locally absolutely continuous. 
Assume that $f$ satisfies the following conditions:
\begin{enumerate}
   \item $f\in L^{\infty}(\mathbb{R})$;
   \item $f'\in \mathrm{VMO}(\mathbb{R})$.
\end{enumerate}
Then the associated graph welding $\varphi$ belongs to $\mathrm{SS}(\mathbb{R})$.
\end{theorem}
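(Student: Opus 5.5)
The plan is to mirror the proof of Theorem~\ref{bmo:label}, factoring the graph welding through the arc-length parametrization $\gamma$ of $\Gamma$:
\[
\varphi^{-1}=F^{-1}\circ G|_{\mathbb R}=(F^{-1}\circ\gamma)\circ(\gamma^{-1}\circ G|_{\mathbb R}),
\]
and to show that each factor lies in $\mathrm{SS}(\mathbb R)$. Since $\mathrm{SS}(\mathbb R)$ is a group under composition (the $\mathrm{VMOA}$-Teichm\"uller space structure), this yields $\varphi^{-1}\in\mathrm{SS}(\mathbb R)$ and hence $\varphi\in\mathrm{SS}(\mathbb R)$. Both factors are already in $\mathrm{SQS}(\mathbb R)$ by the proof of Theorem~\ref{bmo:label}, because $\mathrm{VMO}\subset\mathrm{BMO}$. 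It therefore remains only to upgrade the logarithm of the derivative from $\mathrm{BMO}$ to $\mathrm{VMO}$ for each factor.

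\emph{First factor.} We have $|(\gamma^{-1}\circ F)'|=|F'|$ almost everywhere, so $\log(\gamma^{-1}\circ F)'=\log|F'|$. The hypothesis $f'\in\mathrm{VMO}(\mathbb R)$ gives $|f'|\in\mathrm{VMO}(\mathbb R)$, since $\bigl||a|-|b|\bigr|\le|a-b|$ controls the mean oscillation. Lemma~\ref{lip_of_F:label} then yields $\log|F'|\in\mathrm{VMO}(\mathbb R)$. Thus $\gamma^{-1}\circ F\in\mathrm{SS}(\mathbb R)$, and so is its inverse $F^{-1}\circ\gamma$.

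\emph{Second factor.} This is where the hypothesis $f\in L^\infty$ enters. By Lemma~\ref{bmo1=ca:label}, $\Gamma$ is an asymptotically smooth chord-arc curve. Since $\Gamma$ lies in a bounded horizontal strip, Lemma~\ref{strip} shows that $G|_{\mathbb R}$ and $G^{-1}|_\Gamma$ are uniformly continuous. The real-line characterization of asymptotically smooth curves \cite[Theorem 1.2]{MT1}, the analogue of \cite[Theorem 3.1]{MT1} used in Theorem~\ref{s:label}, then gives $\log G'\in\mathrm{VMOA}(\mathbb H)$, with no relative correction. Consequently $\log|(G|_{\mathbb R})'|=\operatorname{Re}\log G'|_{\mathbb R}\in\mathrm{VMO}(\mathbb R)$, and since $|(\gamma^{-1}\circ G|_{\mathbb R})'|=|(G|_{\mathbb R})'|$ we obtain $\gamma^{-1}\circ G|_{\mathbb R}\in\mathrm{SS}(\mathbb R)$.

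The main obstacle is this second factor: the asymptotic-smoothness result in \cite{MT1} is stated in a relative sense. It presumably yields $\mathrm{VMO}$ behaviour only after the uniform continuity of $G$ and $G^{-1}$ rules out degeneration at large scales, in the same way that uniform continuity was used in the symmetric case. I would first check that the hypotheses of \cite[Theorem 1.2]{MT1} reduce exactly to the uniform continuity supplied by Lemma~\ref{strip}. If they do not, the fallback is to work directly from the vanishing Carleson measure characterization of $\mathrm{VMOA}$, using the boundedness of $\operatorname{Im}\Gamma$ to localize.
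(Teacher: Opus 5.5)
\textbf{Gap: $\mathrm{SS}(\mathbb R)$ is not a group.} Your decomposition $\varphi^{-1}=(F^{-1}\circ\gamma)\circ(\gamma^{-1}\circ G|_{\mathbb R})$ and your treatment of the derivatives of the two factors agree with the paper. The step that assembles them is false: $\mathrm{SS}(\mathbb R)$ is \emph{not} a group under composition or inversion. The paper states this explicitly in Section~\ref{sec:counterparts_in_conformal_welding}. The quotient $T_v=\mathrm{Aff}(\mathbb R)\backslash\mathrm{SS}(\mathbb R)$ only uses the action of affine maps. On the circle, compactness makes uniform continuity automatic, but on $\mathbb R$ it is not. $\mathrm{VMO}(\mathbb R)$ controls only small scales. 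So in $\log|(g\circ h)'|=\log|g'|\circ h+\log|h'|$ the first term can fail to be in $\mathrm{VMO}$ when $h$ stretches short intervals far out on $\mathbb R$ into long ones. The same issue affects $\log|(h^{-1})'|=-\log|h'|\circ h^{-1}$. Consequently three of your steps are unjustified as written:
\begin{enumerate}
\item deducing $F^{-1}\circ\gamma\in\mathrm{SS}(\mathbb R)$ from $\gamma^{-1}\circ F\in\mathrm{SS}(\mathbb R)$;
\item composing the two factors;
\item passing from $\varphi^{-1}$ to $\varphi$.
\end{enumerate}
This is exactly why the theorem needs $f\in L^\infty(\mathbb R)$ beyond the $\mathrm{VMOA}$ step.

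\textbf{Repair.} Replace the group property by the uniform-continuity substitutes from \cite{WM23}, in the form the paper uses them:
\begin{itemize}
\item if $h\in\mathrm{SS}(\mathbb R)$ and $h^{-1}$ is uniformly continuous, then $h^{-1}\in\mathrm{SS}(\mathbb R)$ \cite[Corollary 3.2]{WM23};
\item if $g,h\in\mathrm{SS}(\mathbb R)$ with $h$ uniformly continuous, then $\log|(g\circ h)'|\in\mathrm{VMO}(\mathbb R)$ \cite[Proposition 3.1]{WM23}.
\end{itemize}
You then have to verify three uniform continuity statements:
\begin{itemize}
\item $F^{-1}\circ\gamma$ is $1$-Lipschitz, since $F^{-1}(w)=\operatorname{Re}w$ on $\Gamma$ and $\gamma$ is an arc-length parametrization;
\item $\gamma^{-1}\circ G|_{\mathbb R}$ is uniformly continuous by Lemma~\ref{strip} and the bi-Lipschitz property of $\gamma$ on the chord-arc curve $\Gamma$;
\item $\varphi=G^{-1}\circ F$ is uniformly continuous, because $G^{-1}|_\Gamma$ is (Lemma~\ref{strip}) and $F$ is ($f$ is little Zygmund by Lemma~\ref{bmo-zyg}).
\end{itemize}
You already derived the uniform continuity of $G|_{\mathbb R}$ and $G^{-1}|_\Gamma$, but used it only for the $\mathrm{VMOA}$ claim. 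It is precisely what the composition and inversion steps need.

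\textbf{The $\mathrm{VMOA}$ step.} Your caution there is justified, and it resolves as you anticipated. The paper invokes \cite[Corollary 7.9]{MT1}, which gives $\log G'\in\mathrm{VMOA}$ for an asymptotically smooth chord-arc curve once $\gamma^{-1}\circ G|_{\mathbb R}$ is uniformly continuous.
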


\begin{proof}
By Lemma~\ref{bmo1=ca:label}, the assumption $f'\in \mathrm{VMO}(\mathbb{R})$ implies that $\Gamma$ is an asymptotically smooth chord-arc curve. 
Applying Lemma~\ref{lip_of_F:label}, this also implies that $\log |F'|\in \mathrm{VMO}(\mathbb{R})$. 

Let $\gamma$ be an arc-length parametrization of $\Gamma$. Then $\gamma$ is bi-Lipschitz and satisfies $|\gamma'|=1$ a.e.. 
Consequently, the composition $\gamma^{-1}\circ F$ is locally absolutely continuous. 
Applying the chain rule, we obtain
\[
\log|(\gamma^{-1}\circ F)'|=\log(|(\gamma^{-1})'\circ F|\cdot |F'|)=\log |F'|\in \mathrm{VMO}(\mathbb{R}),
\]
which yields that $\gamma^{-1}\circ F\in \mathrm{SS}(\mathbb{R})$. 
Furthermore, since $\gamma$ is bi-Lipschitz and $F^{-1}$ is uniformly continuous, their composition $F^{-1}\circ \gamma$ is uniformly continuous. 
Thus $F^{-1}\circ \gamma\in \mathrm{SS}(\mathbb{R})$ by \cite[Corollary 3.2]{WM23}. 

By Lemma~\ref{strip}, $G|_{\mathbb{R}}$ is uniformly continuous, and so is $\gamma^{-1}\circ G|_{\mathbb{R}}$. 
Thus, together with the fact that $\Gamma$ is an asymptotically smooth chord-arc curve, we have $\log G'\in \mathrm{VMOA}(\mathbb{R})$ by \cite[Corollary 7.9]{MT1}. 
A direct computation yields
\[
\log |(\gamma^{-1}\circ G|_{\mathbb{R}})'|=\log |(G|_{\mathbb{R}})'|\in \mathrm{VMO}(\mathbb{R}),
\]
from which it follows that $\gamma^{-1}\circ G|_{\mathbb{R}}\in \mathrm{SS}(\mathbb{R})$.
Since $\gamma^{-1}\circ G|_{\mathbb{R}}$ is uniformly continuous and $\gamma^{-1}\circ G|_{\mathbb{R}}\in \mathrm{SS}(\mathbb{R})$, \cite[Proposition 3.1]{WM23} shows that
\[
\log|(F^{-1}\circ \gamma \circ \gamma^{-1}\circ G)'|=\log |(F^{-1}\circ \gamma)'|\circ (\gamma^{-1}\circ G)+ \log |(\gamma^{-1}\circ G)'|\in\mathrm{VMO}(\mathbb{R}). 
\]
This proves that $\varphi^{-1}=F^{-1}\circ G\in \mathrm{SS}(\mathbb{R})$. 

Finally, by Lemma~\ref{bmo-zyg}, $f'\in \mathrm{VMO}(\mathbb{R})$ implies that $f$ is a little Zygmund function. 
Consequently, $F$ is uniformly continuous. 
By Lemma~\ref{strip}, $G^{-1}|_{\Gamma}$ is also uniformly continuous, and so is $G^{-1}\circ F$. 
By \cite[Corollary 3.2]{WM23} again, it follows that $\varphi=G^{-1}\circ F\in \mathrm{SS}(\mathbb{R})$.
\end{proof}

Finally, we turn to the $H^{1/2}$ setting.
The following theorem provides a criterion for the graph welding associated 
with a locally absolutely continuous function to belong to the Weil--Petersson class.
We note that conditions (1) and (2) below are independent and they correspond to the imaginary and the real parts of $\log F'$ respectively.

\begin{theorem}\label{wp:label}
Let $f\colon \mathbb{R}\to \mathbb{R}$ be a locally absolutely continuous function. 
Assume that $f$ satisfies the following conditions:
\begin{enumerate}
   \item $\arctan f'\in H^{1/2}(\mathbb{R})$;
   \item $\log (1+|f'|^2)\in H^{1/2}(\mathbb{R})$.
\end{enumerate}
Then the associated graph welding $\varphi$ belongs to $\mathrm{WP}(\mathbb{R})$.
\end{theorem}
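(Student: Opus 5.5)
Following the pattern of Theorems~\ref{bmo:label} and \ref{vmo:label}, I will factor the graph welding through the arc-length parametrization $\gamma$ of $\Gamma$:
\[
\varphi^{-1}=F^{-1}\circ G|_{\mathbb R}=(F^{-1}\circ\gamma)\circ(\gamma^{-1}\circ G|_{\mathbb R}).
\]
The plan is to show $\log F'\in H^{1/2}(\mathbb R)$ as a complex-valued function, deduce that $\Gamma$ is a Weil--Petersson quasicircle, and then use the characterization of such curves through $\log G'$. Since $\mathrm{WP}(\mathbb R)$ is a group under composition and inversion, it then suffices to show that each factor lies in $\mathrm{WP}(\mathbb R)$.

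\emph{Step 1: $\log F'\in H^{1/2}$.} Since $f$ is locally absolutely continuous, $F'(x)=1+if'(x)$ a.e., so
\[
\log F'=\tfrac12\log(1+|f'|^2)+i\arctan f'.
\]
Conditions (2) and (1) say precisely that the real and imaginary parts lie in $H^{1/2}(\mathbb R)$. Moreover $\arctan f'$ takes values in $(-\pi/2,\pi/2)$. Hence the unit tangent $\tau=F'/|F'|=e^{i\arctan f'}$ of $\Gamma$ satisfies $\tau\in H^{1/2}$ in the graph parameter, since $z\mapsto e^{iz}$ is Lipschitz on the reals.

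\emph{Step 2: reparametrize by arc length.} Condition (2) gives $\log|F'|\in H^{1/2}\subset\mathrm{VMO}$, and hence $|F'|\in A_\infty$. The map $\gamma^{-1}\circ F$ is locally absolutely continuous with derivative $|F'|$ (as in Theorem~\ref{bmo:label}), and $\log|F'|\in H^{1/2}$ gives $\gamma^{-1}\circ F\in\mathrm{WP}(\mathbb R)$. Because $\mathrm{WP}(\mathbb R)$ is a group, $F^{-1}\circ\gamma\in\mathrm{WP}(\mathbb R)$ as well. Next, I need $\Gamma$ to be a Weil--Petersson curve, meaning the arc-length tangent angle $\theta=\arctan f'\circ(\gamma^{-1}\circ F)^{-1}$ lies in $H^{1/2}$. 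Here I use that $H^{1/2}$ is invariant under pre-composition by $\mathrm{WP}$ (indeed by quasisymmetric) homeomorphisms, since the Douglas integral is quasi-invariant under quasisymmetric maps. This gives $\theta=(\arctan f')\circ(F^{-1}\circ\gamma)\in H^{1/2}$.

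\emph{Step 3: the conformal side.} By the Shen--Tang / Bishop characterization in the real-line setting, $\theta\in H^{1/2}$ implies $\log G'\in$ the analytic Dirichlet-type space on $\mathbb H$. Equivalently, $\log|(G|_{\mathbb R})'|\in H^{1/2}(\mathbb R)$, and $\gamma^{-1}\circ G|_{\mathbb R}$ is locally absolutely continuous with derivative $|G'|$ by Theorem~\ref{RP-UHP:label}. So $\gamma^{-1}\circ G|_{\mathbb R}\in\mathrm{WP}(\mathbb R)$. Composing the two factors, $\varphi^{-1}\in\mathrm{WP}(\mathbb R)$, and hence $\varphi\in\mathrm{WP}(\mathbb R)$.

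The main obstacle is Step 3, together with the real-line normalization. The unit-circle theory requires bounded curves and $H^{1/2}$ is not dilation-friendly in the same way, so one must check that the cited characterization applies to the unbounded chord-arc curve $\Gamma$ through $\infty$. I would first try a direct route: write $\log G'=\log|G'|+i\arg G'$, note that $\arg G'$ on $\mathbb R$ equals $\theta\circ(\gamma^{-1}\circ G)$, and use the Hilbert transform relation between $\log|G'|$ and $\arg G'$. This is valid since $\Gamma$ is chord-arc, so $\log G'\in\mathrm{BMOA}$ and boundedness of the Hilbert transform on $H^{1/2}$ applies. The composition invariance from Step 2 then reduces everything to $\theta\in H^{1/2}$, provided there is no circularity in applying the invariance to $\gamma^{-1}\circ G$, which is a priori only in $\mathrm{SQS}$.
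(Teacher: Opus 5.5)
Your proposal is correct and follows essentially the same route as the paper: your $\gamma^{-1}\circ F$ is the paper's $\sigma$, placed in $\mathrm{WP}(\mathbb{R})$ via \cite[Theorem 2.2]{ST20}; composition invariance of $H^{1/2}$ under quasisymmetric maps \cite[Theorem 3.1]{NS} then moves the tangent to the arc-length parameter, so $\Gamma$ is Weil--Petersson; \cite[Theorem 4.4]{STW18} gives $\log|(G|_{\mathbb R})'|\in H^{1/2}(\mathbb R)$; and the group structure finishes the argument. One small correction: the inference ``$\log|F'|\in\mathrm{VMO}$, hence $|F'|\in A_\infty$'' is not valid as stated (e.g.\ $\log w=-2\log(1+|x|)$ lies in $\mathrm{VMO}$, but $w$ is integrable on $\mathbb R$ and hence not doubling), so you should instead take the quasisymmetry of $\gamma^{-1}\circ F$ from the Shen--Tang theorem, which says that $\log h'\in H^{1/2}(\mathbb R)$ alone already forces $h\in\mathrm{WP}(\mathbb R)$.
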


\begin{proof}
We begin by introducing the arc-length parametrization of the graph $\Gamma$. 
Define
\[
\sigma(t)=\int_0^t \sqrt{1+(f'(x))^2}\,dx,
\]
and let $\gamma=F\circ \sigma^{-1}$. Then $\gamma$ is an arc-length parametrization of $\Gamma$. 
By the chain rule,
\[
\gamma'=F'\circ \sigma^{-1}\cdot (\sigma^{-1})'=\left(\frac{F'}{\sigma'}\right)\circ \sigma^{-1}.
\]
Moreover, we may write the ratio as
\[
\frac{F'}{\sigma'}=\frac{1+if'}{\sqrt{1+|f'|^2}}=e^{i\theta},
\]
where $\theta=\arg \frac{F'}{\sigma'}=\arctan f'$. 
Since $\theta\in H^{1/2}(\mathbb{R})$ by assumption, it follows that $\tfrac{F'}{\sigma'}\in H^{1/2}(\mathbb{R})$.

Next, we consider the parametrization $\sigma$. 
Since
\[
\log \sigma'=\log \sqrt{1+|f'|^2}\in H^{1/2}(\mathbb{R}),
\]
\cite[Theorem 2.2]{ST20} implies that $\sigma\in \mathrm{WP}(\mathbb{R})$. 
In particular, $\sigma$ is quasisymmetric. 
By \cite[Theorem 3.1]{NS}, composition with $\sigma^{\pm1}$ preserves the space $H^{1/2}(\mathbb{R})$. 
Hence,
\[
\gamma'=\left(\frac{F'}{\sigma'}\right)\circ \sigma^{-1}\in H^{1/2}(\mathbb{R}),
\]
which shows that $\gamma\in H^{3/2}(\mathbb{R})$. Therefore, $\Gamma$ is a Weil--Petersson quasicircle by \cite[Theorem 1.1]{Bi25}.
Applying the characterization of boundary correspondences for Weil--Petersson quasicircles given in \cite[Theorem 4.4]{STW18}, we obtain
\[
\log |(G|_{\mathbb{R}})'|\in H^{1/2}(\mathbb{R}).
\]

Using the chain rule once again, we see that
\[
\log|(\gamma^{-1}\circ G|_{\mathbb{R}})'|=\log |(G|_{\mathbb{R}})'|\in H^{1/2}(\mathbb{R}),
\]
and therefore $\gamma^{-1}\circ G|_{\mathbb{R}}\in \mathrm{WP}(\mathbb{R})$.
Similarly,
\[
\log|(\gamma^{-1}\circ F)'|=\log |F'|\in H^{1/2}(\mathbb{R}),
\]
and hence $\gamma^{-1}\circ F\in \mathrm{WP}(\mathbb{R})$. 
Since $\mathrm{WP}(\mathbb{R})$ forms a group under composition (see \cite{Cui00}, \cite{TT06}), it follows that
\[
F^{-1}\circ \gamma\in \mathrm{WP}(\mathbb{R}).
\]
Finally,
\[
\varphi^{-1}=F^{-1}\circ G|_{\mathbb{R}}=F^{-1}\circ \gamma\circ \gamma^{-1}\circ G|_{\mathbb{R}}\in \mathrm{WP}(\mathbb{R}).
\]
Thus, $\varphi\in \mathrm{WP}(\mathbb{R})$.
\end{proof}

\begin{corollary}\label{last}
   Let $f\colon \mathbb{R}\to \mathbb{R}$ be a locally absolutely continuous function. 
   Assume that $f'\in H^{1/2}(\mathbb{R})$. 
   Then the associated graph welding $\varphi$ belongs to $\mathrm{WP}(\mathbb{R})$. 
\end{corollary}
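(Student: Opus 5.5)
The plan is to reduce Corollary~\ref{last} to Theorem~\ref{wp:label} by checking its two hypotheses. Write $u=f'\in H^{1/2}(\mathbb{R})$.

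For hypothesis (1), note that $\arctan$ is $1$-Lipschitz on $\mathbb{R}$. This gives the pointwise bound
\[
|\arctan u(x)-\arctan u(y)|\le |u(x)-u(y)|,
\]
and integrating against $|x-y|^{-2}\,dx\,dy$ yields $\arctan f'\in H^{1/2}(\mathbb{R})$. Local integrability is automatic, since $\arctan u$ is bounded.

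For hypothesis (2), I would argue as in Lemma~\ref{lip_of_F:label}. The function $\psi(s)=\log(1+s^2)$ satisfies $|\psi'(s)|=2|s|/(1+s^2)\le 1$, so it is globally $1$-Lipschitz. Hence the $H^{1/2}$ seminorm of $\log(1+|f'|^2)=\psi(u)$ is bounded by that of $u$. Local integrability holds because $0\le\psi(u)\le u^2$ need not be locally integrable in general, but $\psi(u)\le 2\log(1+|u|)\le 2|u|$, and $u\in L^1_{\mathrm{loc}}$. Alternatively, one can invoke Lemma~\ref{lip_of_F:label} directly. Since $\bigl||u(x)|-|u(y)|\bigr|\le|u(x)-u(y)|$, we have $|f'|\in H^{1/2}(\mathbb{R})$, and the lemma then gives $\log|F'|=\tfrac12\log(1+|f'|^2)\in H^{1/2}(\mathbb{R})$.

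With both hypotheses verified, Theorem~\ref{wp:label} gives $\varphi\in\mathrm{WP}(\mathbb{R})$. There is no real obstacle. The only points needing care are that $H^{1/2}$ membership, as defined in the paper, is a seminorm condition plus $L^1_{\mathrm{loc}}$, and that both are preserved under post-composition with a Lipschitz function vanishing at $0$.
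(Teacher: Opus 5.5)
Your proposal is correct and follows the paper's own proof, which likewise deduces the corollary from Theorem~\ref{wp:label} using only that $u\mapsto\arctan u$ and $u\mapsto\log(1+u^2)$ are globally Lipschitz. Your check of local integrability via $\log(1+u^2)\le 2|u|$ supplies a detail the paper leaves implicit, though the sentence mentioning $u^2$ is garbled and should be rewritten.
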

\begin{proof}
   The result follows immediately from Theorem~\ref{wp:label} and the Lipschitzness of $\arctan u$ and $\log (1+u^2)$.
\end{proof}

\begin{remark}
An increasing homeomorphism $h\colon\mathbb{R}\to\mathbb{R}$ belongs to the $p$-Weil--Petersson class $\mathrm{WP}_p(\mathbb{R})$ for $p>1$ if it is locally absolutely continuous and $\log h'\in B_p(\mathbb{R})$, where the Besov space $B_p(\mathbb{R})$ consists of all functions $u\in L^1_{\mathrm{loc}}(\mathbb{R})$ such that
\[
\iint_{\mathbb{R}\times\mathbb{R}}\frac{|u(x)-u(y)|^p}{|x-y|^2}\,dx\,dy<\infty.
\]
See \cite{WM23-b}. 
In addition, we note that the arc-length parametrization $\gamma$ satisfies $\gamma' \in B_p(\mathbb{R})$ if and only if $\log \gamma' \in B_p(\mathbb{R})$.
Then, Theorem \ref{wp:label} and Corollary \ref{last} are also true by the argument in the same line after replacing $\mathrm{WP}(\mathbb{R})$ with $\mathrm{WP}_p(\mathbb{R})$ for any $p>1$ and $H^{1/2}(\mathbb{R})=B_2(\mathbb{R})$ with $B_p(\mathbb{R})$.
\end{remark}

\section{Counterparts in Conformal Weldings}\label{sec:counterparts_in_conformal_welding}

Let $\Gamma$ be the graph of a continuous function
$f\colon\mathbb{R}\to\mathbb{R}$.
Denote by $\Omega_+$ and $\Omega_-$ the domains lying above and below
$\Gamma$, respectively. Let
\[
G_+\colon\mathbb H\to\Omega_+,
\qquad
G_-\colon\mathbb H^*\to\Omega_-
\]
be conformal mappings satisfying $G_{\pm}(\infty)=\infty$.

Recall that the conformal welding $h\colon\mathbb{R} \to \mathbb{R}$ associated with $f$ (or with $\Gamma$) is defined by $h=G^{-1}_-\circ G_+|_{\mathbb{R}}$, which is an increasing self-homeomorphism of $\mathbb{R}$.

In particular, $h$ is determined uniquely by $f$ up to the pre- and post-composition with affine transformations.

As a basic property of conformal weldings associated with graph curves, we recall the following result; see \cite[Theorem 6]{AZ} and \cite[Exercise 6.6]{Pom}.

\begin{proposition}\label{qcgraph}
If $\Gamma$ is a quasicircle, then the associated conformal welding
$h$ belongs to $\mathrm{SQS}(\mathbb{R})$. In particular, $h$ is locally
absolutely continuous.
\end{proposition}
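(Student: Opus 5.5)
The plan is to observe that a quasicircle which is the graph of a function gives a domain $\Omega_+$ that is Lipschitz-like in the vertical direction. The main point is that vertical translation preserves both complementary domains. The statement is attributed to \cite[Theorem 6]{AZ} and \cite[Exercise 6.6]{Pom}, so I would reconstruct the argument along the following lines.

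\emph{Step 1 (reduction to the harmonic measure of each side).} First, since $\Gamma$ is a quasicircle, $G_\pm$ extend to quasiconformal self-maps of $\widehat{\mathbb C}$ fixing $\infty$. Hence $h$ is quasisymmetric. To get $h\in\mathrm{SQS}(\mathbb{R})$, it suffices by the Bishop--Jones characterization (equivalently, the $A_\infty$ characterization) to show that the harmonic measures $\omega_+$ and $\omega_-$ of $\Omega_+$ and $\Omega_-$ on $\Gamma$, both with pole at $\infty$, are mutually $A_\infty$-equivalent. Indeed, $h'$ is then the $A_\infty$ density of $h_*$ of Lebesgue measure, which corresponds to $d\omega_-/d\omega_+$ transported to $\mathbb{R}$. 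The cleanest sufficient condition is that both $\omega_+$ and $\omega_-$ are $A_\infty$ with respect to a common doubling measure on $\Gamma$. Because $\Gamma$ need not be rectifiable, I would use $\Lambda$ = the pushforward of Lebesgue measure under $x\mapsto x+if(x)$, i.e.\ projection measure onto the real axis.

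\emph{Step 2 (cone condition).} Since $\Gamma$ is a graph and a quasicircle, the Ahlfors three-point condition forces $f$ to have no vertical ``overhang''. More precisely, every point $w\in\Gamma$ has vertical half-lines $w+i(0,\infty)\subset\Omega_+$ and $w-i(0,\infty)\subset\Omega_-$. Combining this with quasicircle bounded turning, I would show that there is an angle $\alpha>0$, depending only on the quasicircle constant, such that the truncated cone $\{w+re^{i\theta}:|\theta-\pi/2|<\alpha,\ 0<r<c\,\mathrm{diam}\}$ lies in $\Omega_+$ at every scale. The cone in the downward direction lies in $\Omega_-$ in the same way. This uniform two-sided corkscrew-with-direction condition is the key geometric input.

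\emph{Step 3 (comparison with the projection measure).} Using these vertical cones, I would compare $\omega_+$ on a subarc $\Gamma_I$ over an interval $I$ with $|I|$ and the appropriate normalization. The tool is a Dahlberg-type argument: $\Omega_+$ contains the region above a graph that is locally Lipschitz \emph{relative to} the upward direction. Vertical translates $\Omega_+ + it$ are nested, and by the maximum principle $\omega_+$ restricted to the projection satisfies a reverse Hölder / $A_\infty$ estimate against Lebesgue measure on $\mathbb{R}$. The same holds for $\omega_-$. Then both harmonic measures are $A_\infty$ with respect to Lebesgue measure pulled back by $F$. Since $A_\infty$ is an equivalence relation, $\omega_-\in A_\infty(\omega_+)$, which gives $h\in\mathrm{SQS}(\mathbb{R})$. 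Finally, the absolute continuity assertion follows because an $A_\infty$ derivative means $h$ is locally absolutely continuous by definition of $\mathrm{SQS}$.

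The main obstacle is Step 3. The graph is only a quasicircle and not Lipschitz, so the classical Dahlberg theorem does not apply directly. I would first try to replace it by the Bishop--Jones criterion, whose hypotheses are verified by exhibiting, for each subarc, a Lipschitz subdomain (a cone-union ``sawtooth'' region inside $\Omega_\pm$) touching a fixed proportion of the arc in projection measure, using the cones from Step 2.
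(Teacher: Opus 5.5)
There is a genuine gap. Your reduction in Step~1 is fine, but Steps~2 and~3 cannot be carried out, because the projection measure $\Lambda$ is the wrong reference measure.

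\textbf{Step 2 is false.} The uniform cone would give $f(x')\le f(x)+\cot\alpha\,|x'-x|$ for every $x$ and all nearby $x'$, so $f$ would be Lipschitz. Bounded turning gives you nothing beyond the vertical rays that every graph has. Quasicircle graphs include graphs of nowhere differentiable Zygmund functions (as in the proof of Theorem~\ref{qs:label}), for example $W(x)=\sum_{n\ge0}2^{-n}\cos(2^nx)$.

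\textbf{Step 3 aims at a false statement.} Take the graph of $W$.
\begin{enumerate}
\item $\Omega_+$ misses the downward vertical ray below each $w\in\Gamma$. So a branch of $\arg(z-w)$ is bounded on $\Omega_+$, and there are no twist points. By McMillan's twist point theorem, $\omega_+$ is carried by the cone points of $\Omega_+$.
\item A cone point at $F(x)$ forces $(f(x+s)-f(x))/s$ or $(f(x-s)-f(x))/s$ to stay bounded above as $s\to0_+$.
\item For $s=\pi2^{-k}$ one computes $(W(x\pm s)-W(x))/s=\mp\sum_{n<k}\sin(2^nx)+O(1)$. The law of the iterated logarithm for lacunary series makes both quotients unbounded above for a.e.\ $x$.
\end{enumerate}
Thus $\omega_+$ is carried by $F(E)$ with $|E|=0$, so $\omega_+\perp\Lambda$, and symmetrically $\omega_-\perp\Lambda$. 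In particular the graph welding $\varphi_+=G_+^{-1}\circ F$ is not even locally absolutely continuous. No Dahlberg-type estimate against $\Lambda$ is possible.

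Your Bishop--Jones fallback fails for the same reason. A chord-arc subdomain meets $\Gamma$, up to a length-null set, only in cone points. So it cannot capture a fixed proportion of an arc in projection measure.

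\textbf{The missing idea.} Use the invariance $\Omega_++it\subset\Omega_+$ ($t>0$) analytically rather than geometrically.
\begin{enumerate}
\item The maps $z\mapsto G_+^{-1}(G_+(z)+it)$ form a semigroup of holomorphic self-maps of $\mathbb H$ with Denjoy--Wolff point $\infty$. This is the half-plane form of close-to-convexity, since $\mathbb C\setminus\Omega_+$ is a union of disjoint rays.
\item By Berkson--Porta, the generator $i/G_+'$ takes values in $\overline{\mathbb H}$. Hence $\operatorname{Re}G_+'>0$ and $|\arg G_+'|<\pi/2$.
\item Then $\log|G_+'|$ is, up to a constant, the conjugate of a bounded function, so $\log G_+'\in\mathrm{BMOA}(\mathbb H)$. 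The same holds for $G_-$.
\end{enumerate}
None of this uses the quasicircle hypothesis. It enters only at the end: for quasidisks, $\log G'\in\mathrm{BMOA}$ is equivalent to a quasiconformal extension with Carleson-measure Beltrami coefficient, hence to $h\in\mathrm{SQS}(\mathbb R)$ (Astala--Zinsmeister together with Fefferman--Kenig--Pipher). This is what the references the paper cites in place of a proof supply.

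Along this route $\omega_+$ and $\omega_-$ are compared with each other directly. Both are carried by two-sided cone points, which may project to a Lebesgue-null set. So no intermediate comparison with $\Lambda$ is available.
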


For general quasicircles, not necessarily arising as graph curves, there are many results describing how the properties of the conformal welding $h$ reflect the function space to which $\log G'$ belongs. 
Such results play an important role in the theory of the Teichm\"uller spaces discussed in the introduction.

We next consider the uniform continuity of conformal weldings.

\begin{proposition}
If the imaginary part of $\Gamma$ is bounded or equivalently $f \in L^\infty(\mathbb{R})$, then both the conformal welding $h$ and its inverse $h^{-1}$ are uniformly continuous.
\end{proposition}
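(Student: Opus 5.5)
The plan is to reduce the statement to Lemma~\ref{strip}, applied on each side of $\Gamma$, and then compose. If $f\in L^\infty(\mathbb{R})$, say $|f|\le m$, then $\Gamma$ lies in the horizontal strip $\{|\operatorname{Im} w|\le m\}$. Conversely, the imaginary part of $\Gamma$ is exactly $f(\mathbb{R})$, so the two hypotheses are the same.

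First, apply Lemma~\ref{strip} directly to $G_+\colon\mathbb H\to\Omega_+$. This gives that $G_+|_{\mathbb{R}}$ and $G_+^{-1}|_{\Gamma}$ are uniformly continuous.

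Next, handle $G_-\colon\mathbb H^*\to\Omega_-$ by conjugating with complex conjugation $J(z)=\bar z$. The map $\tilde G_-=J\circ G_-\circ J$ sends $\mathbb H$ conformally onto $J(\Omega_-)$. This is the domain lying above the graph $J(\Gamma)$ of $-f$, which is again contained in a bounded strip, and $\tilde G_-(\infty)=\infty$. Lemma~\ref{strip} then gives uniform continuity of $\tilde G_-|_{\mathbb{R}}$ and $\tilde G_-^{-1}|_{J(\Gamma)}$. Since $J$ is an isometry and is the identity on $\mathbb{R}$, we get $G_-|_{\mathbb{R}}=J\circ\tilde G_-|_{\mathbb{R}}$ and $G_-^{-1}|_\Gamma=\tilde G_-^{-1}\circ J|_\Gamma$, so both are uniformly continuous.

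Finally, write $h=G_-^{-1}\circ G_+|_{\mathbb{R}}$ and $h^{-1}=G_+^{-1}\circ G_-|_{\mathbb{R}}$. A composition of uniformly continuous maps between metric spaces is uniformly continuous, so $h$ and $h^{-1}$ are uniformly continuous.

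The only point that needs care is the conjugation step. One should check that the reflected map still satisfies the normalization $\tilde G_-(\infty)=\infty$ used in Lemma~\ref{strip}, and that $J(\Omega_-)$ is indeed an unbounded Jordan domain above a graph lying in a bounded strip. Both are immediate, since $J$ is an orientation-reversing isometry preserving $\mathbb{R}$ and $\infty$. Alternatively, Lemma~\ref{strip} already allows a rotation by $\pi$, which maps $\mathbb H^*$ onto $\mathbb H$, so one could use that instead of $J$.
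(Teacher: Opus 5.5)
Your proposal is correct and follows the paper's proof: it applies Lemma~\ref{strip} to $G_+$ and to $G_-$, then composes the resulting uniformly continuous maps to get $h=G_-^{-1}\circ G_+|_{\mathbb{R}}$ and $h^{-1}=G_+^{-1}\circ G_-|_{\mathbb{R}}$. Your reflection argument reducing $G_-\colon\mathbb{H}^*\to\Omega_-$ to the lemma only makes explicit a step the paper leaves implicit (``we can apply this claim also to $G_-$'').
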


\begin{proof}
By Lemma \ref{strip}, $G_+|_{\mathbb{R}}\colon \mathbb{R} \to \Gamma$ and its inverse $G_+^{-1}|_{\Gamma}$ are uniformly continuous.
We can apply this claim also to $G_-|_{\mathbb{R}}$ and $G_-^{-1}|_{\Gamma}$.
Hence, $h=(G_-)^{-1}\circ G_+|_{\mathbb{R}}$ and $h^{-1}=G^{-1}_+\circ G_-|_{\mathbb{R}}$ are uniformly continuous.
\end{proof}

For a continuous function $f\colon \mathbb{R} \to \mathbb{R}$, let $F(x)=x+if(x)$, so that $\Gamma=F(\mathbb{R})$. Define
\[
\varphi_+=G_+^{-1}\circ F,
\qquad
\varphi_-=G_-^{-1}\circ F.
\]
Then $\varphi_+$ and $\varphi_-$ are precisely the graph weldings associated
with $f$. Moreover,
\[
h=\varphi_-\circ\varphi_+^{-1},
\qquad
h^{-1}=\varphi_+\circ\varphi_-^{-1}.
\]
Using these identities, the results obtained in Section~\ref{sec:regularity_of_graph_weldings} can be translated into corresponding statements for conformal weldings.

\begin{theorem}
Let $f\colon \mathbb{R} \to \mathbb{R}$ be a continuous function. Then the conformal welding $h$ associated with $f$ satisfies the following:
\begin{enumerate}
  \item If $f$ is a Zygmund function, then $h \in \mathrm{QS}(\mathbb{R})$;
  \item If $f \in \lambda^S_*(\mathbb{R}) \cap L^\infty(\mathbb{R})$, then $h \in \mathrm{S}(\mathbb{R})$.
\end{enumerate}
Moreover, assume further that $f$ is locally absolutely continuous.
\begin{enumerate}
  \setcounter{enumi}{2}
  \item If $f' \in \mathrm{BMO}(\mathbb{R})$, then $h \in \mathrm{SQS}(\mathbb{R})$;
  \item If $f' \in \mathrm{VMO}(\mathbb{R})$ and $f \in L^\infty(\mathbb{R})$, then $h \in \mathrm{SS}(\mathbb{R})$;
  \item If $\arctan f'\in H^{1/2}(\mathbb{R})$ and
    $\log (1+|f'|^2)\in H^{1/2}(\mathbb{R})$, then $h \in \mathrm{WP}(\mathbb{R})$.
\end{enumerate}
\end{theorem}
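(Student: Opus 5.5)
The plan is to apply the results of Section~\ref{sec:regularity_of_graph_weldings} to both graph weldings $\varphi_+=G_+^{-1}\circ F$ and $\varphi_-=G_-^{-1}\circ F$, and then use the identity $h=\varphi_-\circ\varphi_+^{-1}$. Each class in question is closed under composition and inversion: $\mathrm{QS}(\mathbb{R})$, $\mathrm{S}(\mathbb{R})$, $\mathrm{SQS}(\mathbb{R})$, $\mathrm{SS}(\mathbb{R})$ and $\mathrm{WP}(\mathbb{R})$ are all groups. So it suffices to show that $\varphi_+$ and $\varphi_-$ lie in the relevant class.

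The theorems of Section~\ref{sec:regularity_of_graph_weldings} were proved for the domain above $\Gamma$, but the remark there allows the domain below as well. To handle $\varphi_-$ cleanly, I would reflect. Let $\tau(z)=-\bar z$, so that $\tau(\mathbb H^*)=\mathbb H$ is not what we want; instead use $z\mapsto \bar z$ combined with $f\mapsto -f$. Concretely, the graph of $-f$ is $\bar\Gamma$. The domain above $\bar\Gamma$ is $\overline{\Omega_-}$, and $z\mapsto\overline{G_-(\bar z)}$ is a conformal map $\mathbb H\to\overline{\Omega_-}$ fixing $\infty$. Hence $\varphi_-$ coincides with the upper graph welding of $-f$.

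Every hypothesis is invariant under $f\mapsto -f$:
\begin{itemize}
\item the Zygmund, $\lambda^S_*$, $L^\infty$, $\mathrm{BMO}$ and $\mathrm{VMO}$ conditions on $f'$ are linear, so they pass to $-f$;
\item $\arctan(-f')=-\arctan f'$, and $\log(1+|f'|^2)$ is unchanged.
\end{itemize}

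With this in hand, the five items follow:
\begin{enumerate}
\item Theorem~\ref{qs:label} gives $\varphi_\pm\in\mathrm{QS}(\mathbb{R})$.
\item Theorem~\ref{s:label} gives $\varphi_\pm\in\mathrm{S}(\mathbb{R})$. Note that $f\in\lambda^S_*(\mathbb{R})$ is by definition a Zygmund function, so its hypotheses hold.
\item Theorem~\ref{bmo:label} gives $\varphi_\pm\in\mathrm{SQS}(\mathbb{R})$.
\item Theorem~\ref{vmo:label} gives $\varphi_\pm\in\mathrm{SS}(\mathbb{R})$.
\item Theorem~\ref{wp:label} gives $\varphi_\pm\in\mathrm{WP}(\mathbb{R})$.
\end{enumerate}
In each case we then conclude $h=\varphi_-\circ\varphi_+^{-1}$ lies in the same class.

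For item (1) one could alternatively argue directly: $\Gamma$ is a quasicircle by the proof of Theorem~\ref{qs:label}, and Beurling--Ahlfors then gives $h\in\mathrm{QS}(\mathbb{R})$. Proposition~\ref{qcgraph} even gives $\mathrm{SQS}$.

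The main obstacle is the group property in the symmetric cases (2) and (4). $\mathrm{S}(\mathbb{R})$ is a group under composition, which is classical (Gardiner--Sullivan), so item (2) is safe. For $\mathrm{SS}(\mathbb{R})$ on the real line, composition is subtle without uniform continuity, which is exactly the issue the paper handled via \cite{WM23}.

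Here $f\in L^\infty$, so by Lemma~\ref{strip} all of $G_\pm$, $G_\pm^{-1}$ and $F$ are uniformly continuous. $F$ is uniformly continuous by the little Zygmund modulus estimate, and $F^{-1}$ is $1$-Lipschitz. Hence $\varphi_\pm$ and their inverses are uniformly continuous. Then \cite[Proposition 3.1, Corollary 3.2]{WM23} apply, just as in the proof of Theorem~\ref{vmo:label}. They show first that $\varphi_+^{-1}\in\mathrm{SS}(\mathbb{R})$, and then that the composition $\varphi_-\circ\varphi_+^{-1}$ has $\log$-derivative in $\mathrm{VMO}$. For items (3) and (5) the group properties of $\mathrm{SQS}$ and $\mathrm{WP}$, cited earlier, suffice.
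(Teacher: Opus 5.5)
You handle items (1), (3) and (5) correctly through the group property. Item (4) is also fine, via uniform continuity and \cite{WM23}, and your reflection $z\mapsto\overline{G_-(\bar z)}$, $f\mapsto -f$, correctly identifies $\varphi_-$ with the upper graph welding of $-f$.

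The gap is item (2). There you deduce $h=\varphi_-\circ\varphi_+^{-1}\in\mathrm{S}(\mathbb{R})$ from the claim that $\mathrm{S}(\mathbb{R})$ is a group ``by Gardiner--Sullivan''. With the definition used here, where the ratio must tend to $1$ uniformly for all $x\in\mathbb{R}$, this is false. The paper says explicitly that $\mathrm{S}(\mathbb{R})$ and $\mathrm{SS}(\mathbb{R})$ are not groups. Your opening sentence makes the same wrong claim for $\mathrm{SS}(\mathbb{R})$, although you later avoid relying on it. Gardiner and Sullivan work on the circle, where compactness makes every homeomorphism uniformly continuous.

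A counterexample on $\mathbb{R}$: take $g(x)=x+x^3$ and $k(x)=\int_0^x e^{\sin s}\,ds$. Both lie in $\mathrm{S}(\mathbb{R})$, since each is quasisymmetric and has a derivative whose logarithm is uniformly continuous. Now set $t=\pi/(6x^2)$. The map $g$ sends $[x-t,x+t]$ almost affinely onto an interval of length about $\pi$ centred at $g(x)$. Choose $x\to\infty$ with $g(x)\in 2\pi\mathbb{Z}$. Then the symmetry ratio of $k\circ g$ at $x$ and scale $t$ tends to $\int_0^{\pi/2}e^{\sin s}\,ds / \int_0^{\pi/2}e^{-\sin s}\,ds>1$, while $t\to 0$. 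So $k\circ g\notin\mathrm{S}(\mathbb{R})$.

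The fix is already in your last paragraph, which you applied only to item (4). Under the hypotheses of (2) you also have $f\in\lambda_*(\mathbb{R})$: take $\lambda=\mu=1$ in the definition of $\lambda^S_*$. Hence $F$ is uniformly continuous, $F^{-1}$ is $1$-Lipschitz, and Lemma~\ref{strip} gives uniform continuity of $G_\pm$ and $G_\pm^{-1}$. So all of $\varphi_\pm^{\pm1}$ are uniformly continuous. You can then proceed in either of two ways.
\begin{enumerate}
\item Invoke \cite[Corollary 3.4]{WM23}, which is what the paper does.
\item Argue within the paper. The proof of Theorem~\ref{s:label} shows that $\varphi_\pm$ are asymptotically symmetric. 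Lemma~\ref{Finverse_AS:label} then makes $\varphi_+^{-1}$ asymptotically symmetric. Finally, run the $\varepsilon$--$\delta$ composition argument from the proof of Theorem~\ref{s:label}, with the uniformly continuous map $\varphi_+^{-1}$ in the role of $F$ and $\varphi_-$ in the role of $G^{-1}$. This shows $h$ is asymptotically symmetric, hence symmetric.
\end{enumerate}
With this correction your proof coincides with the paper's. The group structure handles $\mathrm{QS}$, $\mathrm{SQS}$ and $\mathrm{WP}$, and uniform continuity of $\varphi_\pm^{\pm1}$ plus \cite{WM23} handles $\mathrm{S}$ and $\mathrm{SS}$.
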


\begin{proof}
We note that $\mathrm{QS}(\mathbb{R})$, $\mathrm{SQS}(\mathbb{R})$, and $\mathrm{WP}(\mathbb{R})$ possess a group structure, whereas $\mathrm{S}(\mathbb{R})$ and $\mathrm{SS}(\mathbb{R})$ do not.

Therefore, assertions (1), (3), and (5) follow immediately from the corresponding results for graph weldings.
To prove (2) and (4), we use the uniform continuity of graph weldings.

Indeed, the assumption $f\in L^\infty(\mathbb{R})$ implies by Lemma~\ref{strip} that both $\varphi_\pm$ and their inverses are uniformly continuous.
Hence, the compositions
\[
h=\varphi_-\circ\varphi_+^{-1},
\qquad
h^{-1}=\varphi_+\circ\varphi_-^{-1}
\]
belong to the same class by \cite[Corollaries 3.2 \& 3.4]{WM23}, yielding (2) and (4).
\end{proof}

\begin{remark}
Concerning assertion (3), Lemma~\ref{bmo-zyg} implies that $f'\in\mathrm{BMO}(\mathbb{R})$ yields a Zygmund function $f$.
By the proof of Theorem~\ref{qs:label}, the graph $\Gamma$ of $f$ is therefore a quasicircle and hence Proposition~\ref{qcgraph} gives the same conclusion that $h \in \mathrm{SQS}(\mathbb{R})$.
Alternatively, by Lemma~\ref{bmo1=ca:label}, the graph $\Gamma$ is a chord-arc curve. 
The conclusion also follows from the fact that the conformal welding of a chord-arc curve is strongly quasisymmetric.
\end{remark}

\end{document}